\definecolor{cblack}{rgb}{0,0,0}
\definecolor{cblue}{rgb}{0.121569,0.466667,0.705882}    
\definecolor{corange}{rgb}{1.000000,0.498039,0.054902}  
\definecolor{cgreen}{rgb}{0.172549,0.627451,0.172549}   
\definecolor{cred}{rgb}{0.839216,0.152941,0.156863}     
\definecolor{cpurple}{rgb}{0.580392,0.403922,0.741176}  
\definecolor{cbrown}{rgb}{0.549020,0.337255,0.294118}   
\definecolor{cpink}{rgb}{0.890196,0.466667,0.760784}
\definecolor{cgray}{rgb}{0.498039,0.498039,0.498039}
\definecolor{cgreen2}{rgb}{0.7372549019607844, 0.7411764705882353, 0.13333333333333333}
\newtheorem{Th}{Theorem}[section]
\newtheorem{Prop}[Th]{Proposition}
\newtheorem{Rem}[Th]{Remark}
\newtheorem{Lemma}[Th]{Lemma}
\newtheorem{remark}[Th]{Remark}
\newtheorem{Def}[Th]{Definition}
\newtheorem{Cor}[Th]{Corollary}
\newtheorem{Conj}[Th]{Conjecture}
\DeclareSymbolFont{bbold}{U}{bbold}{m}{n}
\DeclareSymbolFontAlphabet{\mathbbold}{bbold}
\newcommand{\RR}{\mathbb{R}}
\newcommand{\NN}{\mathbb{N}}
\newcommand{\ZZ}{\mathbb{Z}}
\newcommand{\CC}{\mathbb{C}}
\newcommand{\TT}{\mathbb{T}}
\newcommand{\One}{\mathbbold{1}}
\newcommand{\bD}{\bm D}
\newcommand{\bF}{\bm F}
\newcommand{\bH}{\bm H}
\newcommand{\bM}{\bm M}
\newcommand{\bP}{\bm P}
\newcommand{\bS}{\bm S}
\newcommand{\bU}{\bm U}
\newcommand{\bX}{\bm X}
\newcommand{\bY}{\bm Y}
\newcommand{\ba}{\bm a}
\newcommand{\be}{\bm e}
\newcommand{\bbf}{\bm f}
\newcommand{\bv}{\bm v}
\newcommand{\bw}{\bm w}
\newcommand{\bs}{\bm s}
\newcommand{\bx}{\bm x}
\newcommand{\by}{\bm y}
\newcommand{\sB}{\mathcal{B}}
\newcommand{\sH}{\mathcal{H}}
\newcommand{\sI}{\mathcal{I}}
\newcommand{\sM}{\mathcal{M}}
\newcommand{\sR}{\mathcal{R}}
\newcommand{\sS}{\mathcal{S}}
\newcommand{\id}{\mathsf{id}}
\newcommand{\what}{\widehat}
\newcommand{\tr}{\mathrm{tr}}
\newcommand{\sgn}{\mathrm{sgn}}
\newcommand{\spike}[2]
{\bgroup
  \sbox0{#2}%
  \rlap{\usebox0}%
  \hspace{0.5\wd0}%
  \makebox[0pt][c]{\rule[\dimexpr \ht0+1pt]{0.5pt}{#1}}
  \makebox[0pt][c]{\rule[\dimexpr -\dp0-#1-1pt]{0.5pt}{#1}}
  \hspace{0.5\wd0}%
\egroup}
\title{Dual bounds for the positive definite functions approach to mutually unbiased bases}
\date{February 26, 2022}
\author{}
\author[1]{Afonso S.\ Bandeira\thanks{Email: \textit{bandeira@math.ethz.ch}}}
\author[1]{Nikolaus Doppelbauer\thanks{Email: \textit{doppelbn@student.ethz.ch}}}
\author[2]{Dmitriy Kunisky\thanks{Email: \textit{dmitriy.kunisky@yale.edu}. Partially supported by ONR Award N00014-20-1-2335, a Simons Investigator Award to Daniel Spielman, and NSF grants DMS-1712730 and DMS-1719545. Part of this work was performed while with New York University.}}
\affil[1]{Department of Mathematics, ETH Z\"{u}rich}
\affil[2]{Department of Computer Science, Yale University}
\begin{document}

\maketitle
\thispagestyle{empty}

\begin{abstract}
    A long-standing open problem asks if there can exist 7 mutually unbiased bases (MUBs) in $\CC^6$, or, more generally, $d + 1$ MUBs in $\CC^d$ for any $d$ that is not a prime power.
    The recent work of Kolountzakis, Matolcsi, and Weiner (2016) proposed an application of the method of positive definite functions (a relative of Delsarte's method in coding theory and Lov\'{a}sz's semidefinite programming relaxation of the independent set problem) as a means of answering this question in the negative.
    Namely, they ask whether there exists a polynomial of a unitary matrix input satisfying various properties which, through the method of positive definite functions, would show the non-existence of 7 MUBs in $\CC^6$. Using a convex duality argument, we prove that such a polynomial of degree at most 6 cannot exist.
    We also propose a general dual certificate which we conjecture to certify that this method can never show that there exist strictly fewer than $d + 1$ MUBs in $\CC^d$.
\end{abstract}

\clearpage

\tableofcontents
\thispagestyle{empty}

\clearpage

\setcounter{page}{1}

\section{Introduction}

Collections of mutually unbiased bases of $\CC^d$ (MUBs) have long been of interest in the combinatorics and geometry of packing problems and find important applications in quantum information and coding theory \cite{bengtsson2007mutually,spengler2012entanglement,WOOTTERS1989363,bennett2020quantum}.
These objects are defined as follows.

\begin{Def}\label{def: MUBs}
    Two orthonormal bases $\mathcal{B} = \{\bv_1, \dots, \bv_d\}$ and $\mathcal{B}^{\prime} = \{\bw_1, \dots, \bw_d\}$ of $\mathbb{C}^d$ are called \emph{unbiased} if for any $\bv \in \mathcal{B}$ and $\bw \in \mathcal{B}^{\prime}$, $\lvert\langle \bv, \bw\rangle\rvert = 1 / \sqrt{d}$.
    A set of orthonormal bases $\{ \mathcal{B}_1, \dots \mathcal{B}_n \}$ any two of which are unbiased is called \emph{mutually unbiased}. 
\end{Def}
\noindent
Despite a rich literature studying MUBs, many basic questions remain, including how many MUBs can exist in many dimensions $d$.
Specifically, the number
\[
    N(d) \colonequals \max\{n: \text{there exist } n \text{ mutually unbiased bases in } \CC^d \}
\]
is not known for any $d \geq 2$ that is not a prime power.
In particular, even the smallest non-trivial case $N(6)$ remains unknown.

A general bound, which in the sequel we will call the \emph{Welch bound}, shows that $N(d) \leq d + 1$; however, it has long been believed that $N(6) < 7$ strictly and indeed that perhaps $N(6) = 3$ (see, e.g., \cite{zauner1999grundzuge,PhysRevA.79.052316} and our discussion below).
The recent work \cite{KMW-2018-PositiveDefiniteMUBs} proposed a technique, based on the \emph{method of positive definite functions}, to improve on the Welch bound.
This method may be viewed as an analogue of semidefinite programming relaxation that applies even to infinite-dimensional settings.
In this case, it shows that certain positive definite functions on the unitary group $U(d)$ yield upper bounds on $N(d)$; optimizing this upper bound over \emph{all} positive definite functions is an infinite-dimensional convex optimization problem with some value $B(d) \geq N(d)$.

In this paper, we consider the dual of this optimization problem, and propose a dual certificate that, if feasible for the dual program, would show $B(6) = 7$ (and the general version of the same construction would show that $B(d) = d + 1$ for all $d$); that is, feasibility would imply that the method of positive definite functions cannot improve on the Welch bound, answering in the negative a question posed in~\cite{KMW-2018-PositiveDefiniteMUBs}.
The feasibility of our certificate is only conjectural, but, as a first step, we prove its feasibility for a restricted program, showing that positive definite low-degree polynomials cannot improve on the Welch bound $N(6) \leq 7$.

\subsection{Mutually unbiased bases and Hadamard matrices}

Towards stating our main result, we first recall some standard facts about MUBs and the closely related \emph{Hadamard matrices}. 
First, the bound alluded to above is as follows.
We include the proof for the sake of completeness.

\begin{Prop}[Welch bound; e.g., Section 4 of \cite{bengtsson2007three}] 
    \label{Welch}
    For any $d \geq 2$, there are at most $d + 1$ mutually unbiased bases in $\CC^d$, i.e., $N(d) \leq d + 1$.
\end{Prop}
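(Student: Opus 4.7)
The plan is to map each MUB to a family of trace-one rank-one projections in the real Hilbert space of Hermitian $d \times d$ matrices (with the Hilbert--Schmidt inner product $\langle A, B \rangle = \Tr(A^* B)$), shift them to make them traceless, and then count dimensions. Concretely, given $n$ mutually unbiased bases $\mathcal{B}_1, \dots, \mathcal{B}_n$ with $\mathcal{B}_i = \{\bv_{i,1}, \dots, \bv_{i,d}\}$, I would set $P_{i,j} \colonequals \bv_{i,j} \bv_{i,j}^*$ and $Q_{i,j} \colonequals P_{i,j} - \frac{1}{d} I$. Each $Q_{i,j}$ is Hermitian with trace zero, so it lies in the real vector space of traceless Hermitian matrices, which has dimension $d^2 - 1$.

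The next step is a direct inner-product calculation using $|\langle \bv, \bw \rangle|^2 = \Tr(\bv \bv^* \bw \bw^*)$. Within a single basis, orthonormality gives $\langle Q_{i,j}, Q_{i,j'} \rangle = \delta_{j j'} - 1/d$; between distinct bases, the unbiased condition gives $\langle Q_{i,j}, Q_{i',j'} \rangle = \tfrac{1}{d} - \tfrac{1}{d} = 0$ whenever $i \neq i'$. Hence, letting $V_i \colonequals \mathrm{span}_{\RR}\{Q_{i,1}, \dots, Q_{i,d}\}$, the subspaces $V_1, \dots, V_n$ are pairwise orthogonal in Hilbert--Schmidt inner product.

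Then I would pin down $\dim V_i$. The $d \times d$ Gram matrix of $Q_{i,1}, \dots, Q_{i,d}$ is $I_d - \tfrac{1}{d} J_d$, whose rank is $d-1$ and whose kernel is spanned by the all-ones vector (consistent with the obvious relation $\sum_j Q_{i,j} = \sum_j P_{i,j} - I = 0$). Therefore $\dim V_i = d - 1$. Summing dimensions of the $V_i$'s inside the $(d^2-1)$-dimensional ambient space yields $n(d-1) \leq d^2 - 1 = (d-1)(d+1)$, so $n \leq d + 1$.

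There is no real obstacle here; the argument is a textbook dimension count. The only step that might deserve a line of justification is the computation of the Gram matrix $I_d - \tfrac{1}{d} J_d$ and its rank, but this is immediate from the intra-basis inner product formula above. The whole proof fits in a short paragraph once the projections $Q_{i,j}$ have been introduced.
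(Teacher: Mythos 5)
Your proof is correct, but it takes a genuinely different route from the paper. The paper invokes the generalized Welch inequality
\[
\binom{n+k-1}{k}\sum_{\bx,\by \in X} \lvert \langle \bx, \by \rangle\rvert^{2k} \geq \Bigl(\sum_{\bx \in X} \|\bx\|^{2k}\Bigr)^2
\]
with $k=2$ applied to the union of all basis vectors, treating the inequality as a black box. You instead give the classical dimension-counting argument (often attributed to Wootters and Fields): map each basis vector to the traceless Hermitian matrix $Q_{i,j} = \bv_{i,j}\bv_{i,j}^* - \tfrac{1}{d}\bm I$, verify via the Hilbert--Schmidt inner product that the span $V_i$ of each basis's projectors has dimension $d-1$ and that distinct $V_i$ are pairwise orthogonal, and then bound $n(d-1) \le d^2 - 1$ inside the $(d^2-1)$-dimensional space of traceless Hermitian matrices. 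All the computations you cite check out: $\langle Q_{i,j}, Q_{i,j'}\rangle = \delta_{jj'} - 1/d$ gives the Gram matrix $\bm I - \tfrac{1}{d}\bm J$ of rank $d-1$, and the cross-basis inner product $\tfrac1d - \tfrac1d - \tfrac1d + \tfrac1d = 0$ gives orthogonality. Your approach is arguably more self-contained and conceptually transparent (it exposes the linear-algebraic structure underlying MUBs) whereas the paper's approach is shorter because it delegates the real work to the cited Welch inequality, and it fits better into the paper's packing-bound narrative since the same $k=2$ moment inequality is the one the positive definite function $h_0$ of \eqref{h0-poly} recovers.
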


\begin{proof}
The general Welch bound states that, for any $k \geq 1$ and $X \subset \CC^d$ with $\lvert X \rvert = n$, we have
\[
    \binom{n+k-1}{k}\sum_{\bx,\by \in X} \lvert \langle \bx, \by \rangle\rvert^{2k} \geq \left(\sum_{\bx \in X} \|\bx\|^{2k}\right)^2.
\]
Applying this with $k=2$ and $X = \cup_{i=1}^n \mathcal{B}_i $ shows that $n \leq d+1$, i.e., that there can be at most $d+1$ MUBs in $\mathbb{C}^d$.
\end{proof}
\noindent
Explicit constructions (see the references below for details) show that this bound is tight in the case of prime powers.
\begin{Prop}[Prime power dimensions \cite{combescure2007circulant,combescure2009block,10.1007/978-3-540-24633-6_10}] \label{prop:prime-power-dim}
    For $p$ prime and $k \geq 1$, $N(p^k) = p^k + 1$.
\end{Prop}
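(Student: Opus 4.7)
The plan is to combine the Welch bound (Proposition~\ref{Welch}), which already gives $N(p^k) \leq p^k + 1$, with an explicit construction of $p^k + 1$ mutually unbiased bases in $\CC^{p^k}$; only the latter requires work.

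For odd $p$, the classical construction of Ivanovi\'{c} and Wootters--Fields proceeds as follows. Set $q = p^k$, identify the standard coordinates of $\CC^q$ with $\FF_q$, and let $\chi(x) := \exp(2\pi i \, \Tr_{\FF_q/\FF_p}(x)/p)$ be the canonical nontrivial additive character of $\FF_q$. Take the computational basis $\mathcal{B}_\infty := \{\be_j : j \in \FF_q\}$ together with, for each $a \in \FF_q$, the basis $\mathcal{B}_a := \{\bv_b^{(a)} : b \in \FF_q\}$ where $\bv_b^{(a)} := \frac{1}{\sqrt{q}}\sum_{j \in \FF_q} \chi(aj^2 + bj)\, \be_j$. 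This yields $q + 1$ candidate bases.

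Verification reduces to two character-sum facts. First, each $\mathcal{B}_a$ is orthonormal by the additive character orthogonality relation $\frac{1}{q}\sum_j \chi(cj) = \delta_{c,0}$. Second, unbiasedness between $\mathcal{B}_\infty$ and any $\mathcal{B}_a$ is immediate from $|\langle \be_j, \bv_b^{(a)}\rangle| = 1/\sqrt{q}$, while for $a \ne a'$,
\[
\langle \bv_b^{(a)}, \bv_{b'}^{(a')}\rangle \;=\; \frac{1}{q}\sum_{j \in \FF_q} \chi\bigl((a'-a)j^2 + (b'-b)j\bigr),
\]
which is a nondegenerate quadratic Gauss sum of absolute value $\sqrt{q}$: completing the square in $j$ and applying the classical identity $\bigl|\sum_j \chi(cj^2)\bigr|^2 = q$ for $c \in \FF_q^\times$ gives the required $|\langle \bv_b^{(a)}, \bv_{b'}^{(a')}\rangle| = 1/\sqrt{q}$.

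The main obstacle is the case $p = 2$: in characteristic $2$ the squaring map $j \mapsto j^2$ is $\FF_2$-linear (in fact a field automorphism), so the ``quadratic form'' $aj^2 + bj$ collapses to a linear one and the Gauss sum identity above fails. The standard remedy, and the technical crux, is to replace $\FF_{2^k}$ with the Galois ring $GR(4,k)$ (the unramified extension of $\ZZ/4\ZZ$ of degree $k$), construct the bases from Teichm\"uller representatives paired with an additive character modulo $4$, and invoke Weil-type bounds on exponential sums over Galois rings to evaluate the analogues of the sums above. Once this is in hand, the odd-$p$ and even-$p$ constructions both yield $q+1$ bases meeting the Welch bound, establishing $N(p^k) = p^k + 1$.
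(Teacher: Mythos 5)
The paper does not actually prove this proposition; it simply states it with references to the literature (Ivanovi\'{c}, Wootters--Fields, Klappenecker--R\"otteler, etc.), so there is no ``paper proof'' to compare against. Your sketch is the standard argument, and it is correct as far as it goes. The odd-$p$ part is sound: orthonormality of each $\mathcal{B}_a$ follows from additive character orthogonality, unbiasedness with $\mathcal{B}_\infty$ is immediate, and unbiasedness between $\mathcal{B}_a$ and $\mathcal{B}_{a'}$ reduces, after completing the square (using that $2 \in \FF_q^\times$ since $p$ is odd), to $\bigl|\sum_{j\in\FF_q}\chi(cj^2)\bigr| = \sqrt{q}$ for $c\neq 0$, which is the standard quadratic Gauss sum evaluation over $\FF_q$. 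The upper bound is Proposition~\ref{Welch}. For $p=2$ you correctly identify both the obstruction (squaring is an $\FF_2$-linear Frobenius, so the quadratic phase degenerates) and the accepted remedy (working over the Galois ring $GR(4,k)$ with a $\ZZ/4$-valued character and Teichm\"uller lifts), but you do not carry out the construction or the exponential-sum bound; that part remains a citation-level sketch rather than a proof. That is the only real gap, and it is the same gap the paper itself leaves to the cited references, so the proposal is an acceptable account of the standard route.
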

\noindent
Finally, the main general tool for producing MUBs in other dimensions is the following construction based on taking tensor products of bases.
\begin{Prop}[Tensor product construction; Lemma 3 of \cite{10.1007/978-3-540-24633-6_10}]
    \label{prop:tensor}
    For any $d,d' \geq 1$ we have $N(dd^{\prime}) \geq \min\{N(d), N(d^{\prime})\}$.
\end{Prop}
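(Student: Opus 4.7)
The plan is to use the natural tensor product construction. Set $n \colonequals \min\{N(d), N(d')\}$. By definition, we may fix MUBs $\mathcal{B}_1, \dots, \mathcal{B}_n$ in $\CC^d$, writing $\mathcal{B}_i = \{\bv^{(i)}_1, \dots, \bv^{(i)}_d\}$, and MUBs $\mathcal{B}_1', \dots, \mathcal{B}_n'$ in $\CC^{d'}$, writing $\mathcal{B}_i' = \{\bw^{(i)}_1, \dots, \bw^{(i)}_{d'}\}$. The proposal is to construct $n$ MUBs in $\CC^{dd'} \cong \CC^d \otimes \CC^{d'}$ by setting
\[
    \widetilde{\mathcal{B}}_i \colonequals \{\bv_j^{(i)} \otimes \bw_k^{(i)} : j \in [d],\, k \in [d']\}, \qquad i = 1, \dots, n.
\]

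First, I would check that each $\widetilde{\mathcal{B}}_i$ is an orthonormal basis of $\CC^{dd'}$. This is immediate from the identity $\langle \bx \otimes \by, \bx' \otimes \by' \rangle = \langle \bx, \bx' \rangle \langle \by, \by' \rangle$: orthonormality of $\mathcal{B}_i$ and $\mathcal{B}_i'$ forces the $dd'$ vectors in $\widetilde{\mathcal{B}}_i$ to be pairwise orthogonal and unit norm, and a count shows they form a basis.

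Next, I would verify the unbiasedness condition between $\widetilde{\mathcal{B}}_i$ and $\widetilde{\mathcal{B}}_{i'}$ for $i \neq i'$. Using the same tensor product identity,
\[
    \bigl|\langle \bv^{(i)}_j \otimes \bw^{(i)}_k, \, \bv^{(i')}_{j'} \otimes \bw^{(i')}_{k'}\rangle\bigr| = \bigl|\langle \bv^{(i)}_j, \bv^{(i')}_{j'}\rangle\bigr| \cdot \bigl|\langle \bw^{(i)}_k, \bw^{(i')}_{k'}\rangle\bigr| = \frac{1}{\sqrt d} \cdot \frac{1}{\sqrt{d'}} = \frac{1}{\sqrt{dd'}},
\]
which is exactly the unbiasedness condition in dimension $dd'$. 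Hence $\widetilde{\mathcal{B}}_1, \dots, \widetilde{\mathcal{B}}_n$ are $n$ MUBs in $\CC^{dd'}$, so $N(dd') \geq n = \min\{N(d), N(d')\}$.

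There is no real obstacle in this argument: everything reduces to the multiplicativity of the inner product under tensor products. The only thing to be careful about is that the \emph{same} index $i$ must be used for both factors in $\widetilde{\mathcal{B}}_i$, so that when $i \neq i'$ both inner product factors contribute the required $1/\sqrt{d}$ and $1/\sqrt{d'}$; pairing bases with mismatched indices would not give unbiasedness in general.
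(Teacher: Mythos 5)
Your proof is correct, and it is the standard argument for this result (the paper itself does not reprove it but cites Lemma 3 of the Klappenecker--R\"otteler reference, which gives precisely this tensor product construction). Your cautionary remark at the end is exactly the right thing to flag: using the same index $i$ on both tensor factors is what makes both inner-product factors be $1/\sqrt{d}$ and $1/\sqrt{d'}$ simultaneously when $i \neq i'$.
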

\noindent
In particular, this yields the lower bound $N(6) \geq 3$ which was conjectured by Zauner to be optimal in 1999 \cite{zauner1999grundzuge,bengtsson2007three}.
The problem of improving either the upper or lower bounds in $3 \leq N(6) \leq 7$ has since remained open.
We note, however, that the bound produced by Proposition~\ref{prop:tensor} is suboptimal in certain other dimensions, as witnessed by the construction of \cite{wocjan2004new} based on Latin squares.
For these and other concrete constructions of MUBs in low dimension, the reader may consult \cite{brierley2010mutually}.

The connection between MUBs and Hadamard matrices, which we define below, lets us formulate the problem of finding MUBs as a packing problem in the unitary group $U(d)$.

\begin{Def}[Hadamard matrices]
    We call a unitary matrix $\bm{H} \in U(d)$ a \emph{Hadamard matrix} if $\lvert H_{i,j} \rvert = 1 / \sqrt{d}$ for all $i,j \in [d]$.
    We denote the set of Hadamard matrices by $H(d) \subset U(d)$.
\end{Def}

Let us associate to a basis $\sB$ of $\CC^d$ a unitary matrix $\bU$ whose columns are the vectors of $\sB$; we will equate these two objects freely below.
Then, the following equivalence is immediate.

\begin{Prop}\label{MUBs differences}
    A set of bases $\{ \mathcal{B}_1, \dots \mathcal{B}_n \}$ of $\mathbb{C}^d$ is mutually unbiased if and only if the associated matrices $\{ \bm{U}_1,\bm{U}_2, \dots ,\bm{U}_n \}$ satisfy $\bm{U}_i^*\bm{U}_j \in H(d)$ for all $i \neq j$.
\end{Prop}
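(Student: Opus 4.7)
The plan is to observe that $\bm{U}_i^* \bm{U}_j$ is automatically unitary (as the product of two unitaries), so the statement $\bm{U}_i^* \bm{U}_j \in H(d)$ reduces to controlling the magnitudes of its entries. The key identity is that the $(k,\ell)$ entry of $\bm{U}_i^* \bm{U}_j$ is exactly $\langle \bv_k^{(i)}, \bv_\ell^{(j)} \rangle$, where $\bv_k^{(i)}$ denotes the $k$-th column of $\bm{U}_i$, i.e., the $k$-th vector of $\mathcal{B}_i$. This is immediate from the definition of matrix multiplication applied to $(\bm{U}_i^*)_{k,m}(\bm{U}_j)_{m,\ell}$, recognizing row $k$ of $\bm{U}_i^*$ as the conjugate of column $k$ of $\bm{U}_i$.

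With this identity in hand, the two directions of the equivalence are a matter of reading off the definitions. For the forward direction, assume $\{\mathcal{B}_1, \dots, \mathcal{B}_n\}$ is mutually unbiased and fix $i \neq j$. Every entry of $\bm{U}_i^* \bm{U}_j$ is of the form $\langle \bv, \bw \rangle$ with $\bv \in \mathcal{B}_i$, $\bw \in \mathcal{B}_j$, so by Definition~\ref{def: MUBs} it has absolute value $1/\sqrt{d}$; combined with unitarity, this places $\bm{U}_i^* \bm{U}_j$ in $H(d)$. For the reverse direction, assume $\bm{U}_i^* \bm{U}_j \in H(d)$ for all $i \neq j$. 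Then for any $\bv \in \mathcal{B}_i$, $\bw \in \mathcal{B}_j$, the inner product $\langle \bv, \bw \rangle$ appears as an entry of $\bm{U}_i^* \bm{U}_j$ and hence has absolute value $1/\sqrt{d}$, so $\mathcal{B}_i$ and $\mathcal{B}_j$ are unbiased.

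There is essentially no obstacle here; the proposition is a bookkeeping statement whose only substantive content is the identification of inner products with matrix entries. The only thing to be mindful of is a consistent convention: the columns of $\bm{U}_i$ must be the vectors of $\mathcal{B}_i$ (as stipulated in the paragraph preceding the statement), and $\langle \cdot, \cdot \rangle$ must be taken conjugate-linear in the correct slot so that $\langle \bv, \bw \rangle = \bv^* \bw$, matching the $(k,\ell)$ entry of $\bm{U}_i^* \bm{U}_j$ rather than its conjugate. Since the MUB condition only constrains the modulus of the inner product, any sign convention still yields the same conclusion.
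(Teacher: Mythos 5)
Your proof is correct and is exactly the natural unpacking of what the paper declares to be ``immediate'' (the paper gives no explicit proof, only notes the column-vector identification immediately before the statement). Your identification of the $(k,\ell)$ entry of $\bm{U}_i^*\bm{U}_j$ with the inner product of basis vectors is the whole content, and the rest follows by reading definitions; no issues.
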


\subsection{Upper bounds from positive definite functions}\label{sec: upper bound}
We next review the technique proposed in \cite{KMW-2018-PositiveDefiniteMUBs} for bounding $N(d)$ using positive definite functions.
Let us work for the moment in the general setting of \emph{packing problems} over a compact group $G$, where we search for collections of elements $g_1, \dots, g_n \in G$ such that the pairwise differences $g_i^{-1}g_j$ avoid some forbidden subset $A \subset G$.
By Proposition~\ref{MUBs differences}, the problem of finding MUBs falls under this framework with $G = U(d)$ and $A = H(d)^c$.
In such a setting, we write $e$ for the identity element of $G$, and $\nu$ for the Haar measure on $G$ normalized so that $\nu(G) = 1$.

The following is the key definition underlying the technique we will study.
\begin{Def}[Positive definite function] 
    For $G$ a compact group, we say that a continuous $f: G \to \CC$ is \emph{positive definite} if, for all $n \geq 1$, $g_1, \dots, g_n \in G$, and $c_1, \dots , c_n \in \mathbb{C}$, we have
    \[
        \sum_{i,j=1}^n \overline{c_i}f(g_i^{-1}g_j)c_j \geq 0.
    \]
    We denote the set of such functions by $C_{\succeq 0}(G)$.
\end{Def}

The main tool used by \cite{KMW-2018-PositiveDefiniteMUBs} is the following general application of positive definite functions to packing problems in compact groups.
\begin{Th}[Theorem 2.3 in \cite{KMW-2018-PositiveDefiniteMUBs}]\label{th: upper bound}
Let $G$ be a compact group, and let $A = A^{-1} \subset G$ with $e \in A$. Suppose $h: G \rightarrow \mathbb{R}$ satisfies the following properties:
\begin{enumerate}
    \item $h \in C_{\succeq 0}(G)$,
    \item $h(x) \leq 0$ for all $x \in A^c$, and
    \item $\int h d \nu > 0$.
\end{enumerate}
Then, for any $B \subset G$ such that $b^{-1}c \in A^c$ for all $b, c \in B$ distinct, we have
\[
    \lvert B \rvert \leq \frac{h(e)}{\int h d \nu} .
\]
\end{Th}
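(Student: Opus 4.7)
The plan is a standard Delsarte-style double bound. Set $n = |B|$, enumerate $B = \{b_1,\dots,b_n\}$, and consider the quadratic form
\[
    \Phi \;\colonequals\; \sum_{i,j=1}^{n} h(b_i^{-1} b_j).
\]
First I would bound $\Phi$ from above by $n\,h(e)$: split into diagonal and off-diagonal contributions; the diagonal contributes exactly $n\,h(e)$, while the packing hypothesis gives $b_i^{-1}b_j \in A^c$ for $i \neq j$, so hypothesis (ii) makes each off-diagonal summand non-positive.

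Second, and substantively, I would bound $\Phi$ from below by $n^2 \int h\, d\nu$. A direct application of positive definiteness to $b_1,\dots,b_n$ with all coefficients equal to $1$ only gives $\Phi \geq 0$, which is too weak; it carries no information about the mean value of $h$. The key move is to apply the positive-definiteness inequality to the doubled family $\{b_i\}_{i=1}^n \cup \{g b_i\}_{i=1}^n$ for an arbitrary $g \in G$, with coefficients $+1$ on the first block and $-1$ on the second. Using the identity $(gb_i)^{-1}(gb_j) = b_i^{-1}b_j$ to collapse the two diagonal blocks, the inequality reads
\[
    0 \;\leq\; 2\Phi \;-\; \sum_{i,j=1}^{n} h(b_i^{-1} g\, b_j) \;-\; \sum_{i,j=1}^{n} h(b_i^{-1} g^{-1} b_j).
\]
Integrating in $g$ against $d\nu$ and invoking bi-invariance of the Haar measure (so that $\int_G h(b_i^{-1} g\, b_j)\,d\nu(g) = \int_G h\,d\nu$ for every $i,j$), each cross sum collapses to $n^2 \int h\,d\nu$, yielding $\Phi \geq n^2 \int h\,d\nu$.

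Combining the two bounds gives $n^2 \int h\, d\nu \leq \Phi \leq n\, h(e)$, and dividing by $n \int h\, d\nu > 0$ (hypothesis (iii)) produces the claim. The only real obstacle is the lower bound; the upper bound is a definitional check. Conceptually the averaging device above is equivalent to the observation that $h - \int h\, d\nu$ remains positive definite (it only subtracts off the non-negative trivial-representation coefficient in the Peter--Weyl decomposition), but the elementary $2n$-point trick avoids any appeal to representation theory of compact groups.
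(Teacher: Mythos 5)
Your proof is correct. Note that the paper in question does not supply its own proof of this theorem---it is cited verbatim from \cite{KMW-2018-PositiveDefiniteMUBs}---so there is no in-paper argument to compare against; what you have produced is a complete, self-contained proof of the cited result. Both bounds hold exactly as you claim: the upper bound $\Phi \leq n\,h(e)$ is immediate from the packing hypothesis and property (ii), and the lower bound $\Phi \geq n^2 \int h\,d\nu$ follows cleanly from the $2n$-point positive definiteness inequality once you integrate in $g$, using that the normalized Haar measure on a compact group is bi-invariant and inversion-invariant so that $\int_G h(b_i^{-1} g b_j)\,d\nu(g) = \int_G h(b_i^{-1} g^{-1} b_j)\,d\nu(g) = \int_G h\,d\nu$ for every $i,j$. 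The closing remark is also accurate and worth retaining: since $\what{h}(1) = \int h\,d\nu$ and $\what{h}(\pi) \succeq 0$ for all $\pi$, the function $h - \int h\,d\nu$ has the trivial-representation coefficient zeroed out and all others unchanged, hence remains positive definite, and applying the ordinary $n$-point inequality to it with unit coefficients gives the same lower bound directly. Your $2n$-point averaging device buys you exactly this consequence without invoking the Peter--Weyl decomposition or Theorem~\ref{thm:pos-def-measures}, which is a small but genuine gain in elementariness. The only implicit assumption is that $B$ is finite; if it were not, the bound applied to arbitrary finite subsets would force finiteness anyway.
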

\noindent
Let us write the optimal bound achieved in this way, for a given $G$ and $A$, as
\[
    B(G, A) \colonequals \left\{\begin{array}{ll} \text{infimum of} & h(e) \\[0.5em] \text{subject to} & h \in C_{\succeq 0}(G) \text{ real-valued}, \\[0.5em] & h(g) \leq 0 \text{ for all } g \in A^c, \\[0.6em] & \int h d\nu = 1 \end{array}\right\},
\]
where we assume without loss of generality in the optimization that $h$ is normalized to fix its average value.

We may write the application of this result to the problem of MUBs by setting $G = U(d)$ and $A^c = H(d)$, obtaining the following.
\begin{Cor}
    Let $\nu$ denote the normalized Haar measure of $U(d)$.
    Then,
    \begin{align} \label{Primal}
        N(d) \leq B(d) &\colonequals \left\{\begin{array}{ll} \text{infimum of} & h(\bm I) \\[0.5em] \text{subject to} & h \in C_{\succeq 0}(U(d)) \text{ real-valued}, \\[0.5em] & h(\bm H) \leq 0 \text{ for all } \bm H \in H(d), \\[0.6em] & \int h d\nu = 1 \end{array}\right\} \\
        &= B(U(d), H(d)^c). \nonumber
    \end{align}
\end{Cor}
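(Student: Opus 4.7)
The corollary is essentially a direct specialization of Theorem~\ref{th: upper bound} to the group $G = U(d)$ with forbidden set $A = H(d)^c$, combined with Proposition~\ref{MUBs differences}. My plan is to verify the hypotheses of Theorem~\ref{th: upper bound} in this setting, apply it, and then account for the normalization $\int h\, d\nu = 1$ appearing in \eqref{Primal}.

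First I would check the two structural hypotheses on $A$. We need $A = A^{-1}$ and $e \in A$, where $e = \bm I$ is the identity of $U(d)$. For the former, observe that $H(d)$ is closed under taking inverses: if $\bm H \in H(d)$, then $\bm H^{-1} = \bm H^*$ is still unitary with entries of modulus $1/\sqrt{d}$, so $\bm H^{-1} \in H(d)$, hence $H(d)^c$ is also closed under inverses. For the latter, the identity matrix $\bm I$ has entries in $\{0,1\}$, which for $d \geq 2$ are never all of modulus $1/\sqrt{d}$, so $\bm I \notin H(d)$, i.e., $\bm I \in A$.

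Next I would translate the MUB counting problem into the packing language of Theorem~\ref{th: upper bound}. Suppose $\{\mathcal{B}_1,\dots,\mathcal{B}_n\}$ are MUBs in $\CC^d$ and let $\bU_1,\dots,\bU_n \in U(d)$ be the associated unitaries. By Proposition~\ref{MUBs differences}, $\bU_i^{-1}\bU_j = \bU_i^*\bU_j \in H(d) = A^c$ for all $i \neq j$. Thus the set $B \colonequals \{\bU_1,\dots,\bU_n\}$ satisfies the packing hypothesis of Theorem~\ref{th: upper bound}, which immediately yields
\[
    n \;\leq\; \frac{h(\bm I)}{\int h\, d\nu}
\]
for every real-valued $h \in C_{\succeq 0}(U(d))$ with $h \leq 0$ on $H(d)$ and $\int h\, d\nu > 0$. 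Taking the supremum over $n$ on the left gives $N(d) \leq h(\bm I)/\int h\, d\nu$.

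Finally, I would handle the normalization. The ratio $h(\bm I)/\int h\, d\nu$ is invariant under positive rescaling of $h$, so without loss of generality we may restrict to $h$ with $\int h\, d\nu = 1$, in which case the right-hand side becomes $h(\bm I)$. Taking the infimum over all such $h$ yields $N(d) \leq B(d)$, where $B(d)$ is the optimal value of the program \eqref{Primal}. The identification $B(d) = B(U(d), H(d)^c)$ is then just the definition of the latter quantity. There is no real obstacle here; the only minor care needed is to verify the two set-theoretic conditions on $H(d)^c$ and to confirm that the strict inequality $\int h\, d\nu > 0$ in Theorem~\ref{th: upper bound} is compatible with the equality normalization in \eqref{Primal}, which holds by the rescaling argument above.
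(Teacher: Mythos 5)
Your proposal is correct and follows the same route the paper implicitly takes: the corollary is stated as an immediate specialization of Theorem~\ref{th: upper bound} (via Proposition~\ref{MUBs differences}) and the definition of $B(G,A)$, and you have simply spelled out the checks that the paper leaves tacit, namely that $H(d)^c = (H(d)^c)^{-1}$, that $\bm I \in H(d)^c$, and that the normalization $\int h\,d\nu = 1$ can be imposed by rescaling without loss of generality.
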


Of course, the question remains to identify $B(d)$.
In \cite{KMW-2018-PositiveDefiniteMUBs}, the authors show that the polynomial
\begin{equation} \label{h0-poly}
        h_0(\bm{U}) =   \sum_{i,j =1}^d \lvert U_{i,j} \rvert^4 -1
\end{equation}
has $h_0 / \int h_0 d\nu$ feasible for \eqref{Primal} (note that in fact $h_0(\bm U) = 0$ for all $\bm U \in H(d)$), and has $h_0(\bm I) = d - 1$ and $\int h_0 d \nu = \frac{d-1}{d+1}$.
Therefore,
\[ B(d) \leq \frac{h_0(\bm I)}{\int h_0 d\nu} = \frac{d - 1}{\frac{d - 1}{d + 1}} = d + 1, \]
giving an alternative proof of the Welch bound (our Proposition~\ref{Welch}).
They further conjecture that, when $d = 6$, it may be possible to find a polynomial feasible for \eqref{Primal} which would show that $B(6) < 7$ (and therefore that $N(d) < 7$).
Moreover, they conjecture a specific set of polynomials to vanish on $H(6)$, which could then be used to produce a better upper bound.

\subsection{Main result}

Let us write $\CC_{\leq k}[\bm U, \overline{\bm U}]$ for the set of polynomials in the entries of a matrix $\bm U$ and their conjugates whose degree is at most $k$.
For example, the polynomial given in \eqref{h0-poly} belongs to $\CC_{\leq 4}[\bm U, \overline{\bm U}]$.
Following \cite{KMW-2018-PositiveDefiniteMUBs} in restricting our attention to polynomials, we will consider the related optimization problems
\[ B(d) \leq B_{\leq k}(d) \colonequals \left\{\begin{array}{ll} \text{infimum of} & h(\bm I) \\[0.5em] \text{subject to} & h \in C_{\succeq 0}(U(d)) \cap \CC_{\leq k}[\bm U, \overline{\bm U}] \text{ real-valued}, \\[0.5em] & h(\bm H) \leq 0 \text{ for all } \bm H \in H(d), \\[0.6em] & \int h d\nu = 1 \end{array}\right\}, \]
which is identical to \eqref{Primal} except for further constraining $h \in \CC_{\leq k}[\bm U, \overline{\bm U}]$. Writing $\CC[\bm U, \overline{\bm U}] \colonequals \cup_{k \in \NN} \CC_{\leq k}[\bm U, \overline{\bm U}]$, an application of the Stone-Weierstrass theorem yields
\[
    B(d) = \lim_{k \to \infty} B_{\leq k}(d).
\]

In this notation, the general conjecture proposed in \cite{KMW-2018-PositiveDefiniteMUBs} is that $B_{\leq k}(6) < 7$ for some modest $k$. Our main result, giving evidence against this conjecture, is the following.
\begin{Th}
    \label{th:lower bound}
    Positive definite polynomials of degree at most 6 cannot show there are less than 7 MUBs in $\mathbb{C}^6$, i.e., $B_{\leq 6}(6) = 7$.
\end{Th}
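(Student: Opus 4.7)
The plan is to prove the nontrivial direction $B_{\leq 6}(6) \geq 7$ by exhibiting a feasible dual solution; the matching upper bound $B_{\leq 6}(6) \leq 7$ is already supplied by the degree-$4$ polynomial $h_0$ in~\eqref{h0-poly}. Since $B_{\leq k}(d)$ is a conic linear program on the finite-dimensional space $\CC_{\leq k}[\bm U, \overline{\bm U}]$, with a positive-definite cone constraint, pointwise inequalities $h(\bm H) \le 0$ indexed by $\bm H \in H(d)$, and the normalization $\int h\, d\nu = 1$, introducing a nonnegative Borel measure $\mu$ on $H(d)$ as Lagrange multiplier for the inequality constraints together with a scalar for the normalization produces the dual program: maximize $1 + \mu(H(d))$ over $\mu \geq 0$ supported on $H(d)$ subject to
\[
    h(\bm I) + \int h\, d\mu \;\geq\; \bigl(1 + \mu(H(d))\bigr)\int h\, d\nu \qquad \text{for every real } h \in C_{\succeq 0}(U(d)) \cap \CC_{\leq k}[\bm U, \overline{\bm U}].
\]
By weak duality, it suffices to exhibit a nonnegative measure on $H(6)$ of total mass $6$ satisfying this inequality for $k = 6$.

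Next, I would decouple the dual constraint using the Peter--Weyl theorem. Writing $\hat h(\rho) := \int h(\bm U)\rho(\bm U)^*\, d\nu(\bm U)$ and $\hat\mu(\rho) := \int \rho(\bm H)\, d\mu(\bm H)$ for each irreducible representation $\rho$ of $U(d)$, Bochner's theorem identifies $C_{\succeq 0}(U(d))$ with $\{h : \hat h(\rho) \succeq 0 \text{ for all } \rho\}$, while the degree restriction localizes $\hat h$ to the finite set $\sR_k$ of irreps whose matrix coefficients appear in $\CC_{\leq k}[\bm U, \overline{\bm U}]$. Expanding the dual constraint irrep by irrep and exploiting the positivity freedom in each $\hat h(\rho) \succeq 0$ separately, it becomes the mass constraint $\mu(H(d)) = d$ (from the trivial representation) together with the finite family of semidefinite conditions
\[
    I_{d_\rho} + \hat\mu(\rho) \;\succeq\; 0 \qquad \text{for every nontrivial } \rho \in \sR_k.
\]
This is the general dual certificate conjectured by the paper: it is modeled on the prime-power case, where $d+1$ genuine MUBs $\bm U_0, \dots, \bm U_d$ yield $\mu_{\mathrm{MUB}} := \tfrac{1}{d+1}\sum_{i \neq j}\delta_{\bm U_i^*\bm U_j}$ of total mass $d$, and the identity $(d+1)\bigl(I + \hat\mu_{\mathrm{MUB}}(\rho)\bigr) = \bigl(\sum_i \rho(\bm U_i)\bigr)^*\bigl(\sum_i \rho(\bm U_i)\bigr) \succeq 0$ confirms feasibility automatically for every $\rho$. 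For $d = 6$, where no such MUB system exists, I would replace this empirical measure by an appropriate invariant substitute supported on $H(6)$, for example an orbit average under the natural symmetry group of $H(6)$ (the bilateral action of diagonals and permutations) seeded at a known Hadamard such as the Fourier matrix $F_6$, chosen so that $\hat\mu(\rho)$ can be evaluated explicitly on each $\rho \in \sR_6$.

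Finally, I would verify $I + \hat\mu(\rho) \succeq 0$ for each nontrivial $\rho \in \sR_6$. The list $\sR_6$ is explicit and modest, and invariance of $\mu$ under the stabilizer of $H(6)$ forces $\hat\mu(\rho)$ into a small block-diagonal shape via Schur--Weyl branching, reducing each semidefinite check to a handful of eigenvalue inequalities computable from characters. The main obstacle I anticipate is twofold: first, identifying an invariant measure on $H(6)$ whose Fourier coefficients are both analytically tractable and tight enough to pass the semidefinite test at every relevant irrep (a delicate matter, since the structure of $H(6)$ itself is only partially understood, containing continuous families of Hadamards absent in the prime-power setting); and second, closing the largest-signature cases at total size $5$ and $6$, where $\hat\mu(\rho)$ has nontrivial block sizes and the inequality $I + \hat\mu(\rho) \succeq 0$ may require either a computer-assisted SDP verification or a carefully engineered character-theoretic bound.
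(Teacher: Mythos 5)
Your proposal is on the same track as the paper: you derive the same kind of finite-dimensional dual (Lemma~\ref{lem:dual-mub-finite}), propose the same dual certificate (an orbit average of the Fourier matrix $\bm F_6$ under the bilateral $\TT^d \rtimes S_d$ action, i.e.\ the paper's $\mu_{[\bm F_6]}$), and plan to close the argument by checking $\bm I + \widehat{\mu}(\pi) \succeq \bm 0$ irrep by irrep. However, as written the proposal has a genuine gap in the last step, which is exactly where the real work lies.

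First, the structural reduction you invoke is imprecise and understates what is needed. You say invariance forces $\widehat{\mu}(\rho)$ into a ``small block-diagonal shape via Schur--Weyl branching,'' leaving ``a handful of eigenvalue inequalities.'' But $\widehat{\mu_{[\bm H]}}(\pi)$ is not merely block-diagonal: it equals $\bm\Pi_{\TT^d\rtimes S_d}\,\pi(\bm H)\,\bm\Pi_{\TT^d\rtimes S_d}$, so its range is the zero-weight, Weyl-group-invariant subspace $\widetilde V_\pi$ of $\pi$. The decisive fact (Corollary~\ref{invariant subspace dimension}, drawing on Gay's formula for the zero-weight $S_d$-action) is that $\dim(\widetilde V_{\bm w}) \leq 1$ for every weight with $|\bm w| \leq 6$, with the only nonzero cases being $\bm w = (k,0,0,0,0,-k)$ for $k\in\{2,3\}$. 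That collapse to rank $\leq 1$ is what turns each semidefinite check into a single scalar inequality $-\tfrac16\|\bv\|^2 \leq \bv^\top\pi(\bm H)\bv$ rather than a block SDP. You do not establish (or even state) this dimension count, and without it the plan does not actually reduce the verification to a tractable calculation.

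Second, even granting the rank-one reduction, the scalar quantity $\bv^\top\pi(\bm F_6)\bv$ is highly nontrivial: $\bv$ lives in a tensor space of dimension up to $6^{18}$, and evaluating the quadratic form requires expressing it in terms of exponential sums $f(\bm S)$ indexed by magic-square matrices $\bm S$ (modulo $d$), exploiting their $S_d\times S_d$ invariance, and then carrying out a large but finite symbolic computation. Your proposal stops at ``may require either a computer-assisted SDP verification or a carefully engineered character-theoretic bound,'' which names the obstacle but does not overcome it. So the proposal correctly sets up the dual and guesses the right certificate, but leaves unproven both the key low-dimensionality of $\widetilde V_\pi$ and the positivity of the resulting scalars; these are precisely the content of the paper's Sections~\ref{sec: d=6} and Appendix~\ref{app:computational}.
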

\noindent

It remains unclear if our technique extends to larger values of $d$ and $k$, but we propose a dual certificate construction that, if verified, would yield the following stronger negative result.

\begin{Conj}\label{conj: strong lower bound}
    The method of positive definite functions cannot improve on the Welch bound in any dimension, i.e., for all $d \geq 1$, $B(d) = d + 1$.
\end{Conj}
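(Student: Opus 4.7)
The plan is to attack Conjecture~\ref{conj: strong lower bound} by convex LP duality, constructing an explicit dual certificate for the infinite-dimensional program defining $B(d)$. Attaching a nonnegative Borel measure $\mu$ on $H(d)$ to the inequality constraints $h(\bm H) \le 0$, a scalar $\lambda$ to the normalization $\int h\, d\nu = 1$, and dualizing the positive-definiteness of $h$ against the cone of positive definite distributions, a standard Lagrangian computation shows that $B(d) \ge \lambda$ whenever one can exhibit $\mu \ge 0$ supported on $H(d)$ and a positive definite distribution $T$ on $U(d)$ with
\[
  \delta_{\bm I} + \mu - \lambda\, \nu = T
\]
as functionals in the topological dual of $C(U(d))$. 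Strong duality should follow from compactness of $U(d)$ together with a standard interior-point argument ($h_0$ from \eqref{h0-poly}, after a small perturbation, furnishes a Slater point), so the conjecture reduces to producing such $(\mu,T)$ with $\lambda = d+1$. Indeed, given this data, any primal-feasible $h$ satisfies $h(\bm I) = (d+1)\int h\, d\nu - \int h\, d\mu + T(h) \ge d+1$, using $\int h\, d\mu \le 0$ (since $h \le 0$ on $H(d)$ and $\mu \ge 0$) and $T(h) \ge 0$ (since $h \in C_{\succeq 0}(U(d))$ and $T$ lies in its dual cone).

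By the Peter--Weyl theorem, $T$ is a positive definite distribution iff its Fourier coefficient $\hat T(\rho)$ is positive semidefinite for every irreducible representation $\rho$ of $U(d)$. Since $\widehat{\delta_{\bm I}}(\rho) = \bm I_{d_\rho}$ and $\hat\nu$ vanishes on every nontrivial irrep, the conjecture reduces to producing a nonnegative measure $\mu$ on $H(d)$ with total mass $\mu(H(d)) \ge d$ such that $\bm I_{d_\rho} + \hat\mu(\rho) \succeq 0$ for every nontrivial irrep $\rho$. As a candidate, I would take $\mu$ to be a convex combination of bi-invariant measures on orbits inside $H(d)$ of canonical Hadamard matrices under the natural left-right action by pairs of diagonal unitaries (and, if helpful, by permutations), normalized so that $\mu(H(d)) = d$; the prototype is $d$ times the pushforward of Haar measure on the product of two diagonal unitary tori by $(\bm D_1, \bm D_2) \mapsto \bm D_1 \bm F_d \bm D_2$, where $\bm F_d$ is the discrete Fourier transform matrix. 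Bi-invariance under the diagonal tori forces $\hat\mu(\rho)$ to be supported in the zero-weight subspace of $\rho$, so $\hat\mu(\rho) = 0$ automatically for every irrep without a zero weight, drastically shrinking the family of positive semidefinite conditions that must be verified. Complementary slackness against the tight primal solution $h_0$ further pins down $\hat\mu$ on the few irreps appearing in the Peter--Weyl decomposition of $h_0$, providing a consistency check on the normalization.

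The main obstacle is then verifying $\bm I + \hat\mu(\rho) \succeq 0$ uniformly across the infinite collection of irreps with nontrivial zero-weight space. For any individual $\rho$, $\hat\mu(\rho)$ is computable as a matrix-valued character sum over entries of $\bm F_d$ and reduces to Gauss-sum-type identities, but the conceptual challenge is extracting a uniform spectral lower bound. Promising approaches include exploiting the residual $S_d \times S_d$ permutation symmetry of $\mu$ to block-diagonalize the Fourier coefficients further, recognizing $\hat\mu(\rho)$ as a Gram matrix whose defect from positive semidefiniteness is controlled by an identifiable combinatorial quantity, and, in dimensions such as $d=6$ where $H(d)$ contains inequivalent Hadamard families (Tao-type, affine, etc.), enriching $\mu$ with contributions from several orbits to gain extra degrees of freedom. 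Theorem~\ref{th:lower bound} presumably carries out exactly this verification for the finitely many irreps contributing to polynomials of degree $\le 6$ when $d = 6$; handling all irreps uniformly in $d$ is the hurdle standing between that theorem and the full conjecture.
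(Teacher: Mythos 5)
You have correctly reconstructed the strategy the paper itself pursues: dualize the infinite-dimensional conic program, reduce to exhibiting a probability measure $\mu$ supported in $H(d)$ with $\widehat{\mu}(\pi) \succeq -\tfrac{1}{d}\bm I$ for every irrep $\pi$ of $U(d)$, and build $\mu$ from the Fourier matrix $\bm F_d$ averaged under the natural group action. Your prototype measure is essentially the paper's $d\,\mu_{[\bm F_d]}$ from Conjecture~\ref{conj: strong dual measure}, and you correctly observe that torus-bi-invariance confines $\widehat{\mu}(\pi)$ to the zero-weight subspace. However, the statement you are addressing is expressly a \emph{conjecture} in the paper, not a theorem, and the reason is precisely the step you yourself flag as the ``main obstacle'': proving $\bm I + d\,\widehat{\mu_{[\bm F_d]}}(\pi) \succeq 0$ uniformly over all irreps is genuinely open. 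The paper establishes it only for the finitely many irreps contributing to degree-$\leq 6$ polynomials in $d = 6$ (Theorem~\ref{th:lower bound}), by showing that only the weights $(2,0,0,0,0,-2)$ and $(3,0,0,0,0,-3)$ give a nonzero contribution and checking those two cases by symbolic computation. So your proposal is a faithful account of the paper's framework but supplies no argument that would promote the conjecture to a theorem; also, the Slater/strong-duality remark is unnecessary, since weak duality alone combined with the known primal bound $B(d) \leq d+1$ would suffice.

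One substantive detail worth correcting: your certificate averages only over pairs of diagonal unitaries, treating permutations as optional. The paper's $\mu_{[\bm H]}$ is bi-invariant under the full generalized-permutation group $\TT^d \rtimes S_d$, and that is load-bearing. Torus invariance alone projects $\widehat{\mu}(\pi)$ onto the zero-weight subspace, whose dimension can grow quickly with the weight; the additional $S_d$-invariance projects further onto the Weyl-group-fixed subspace $\widetilde V_{\pi}$, which in the cases the paper treats is at most one-dimensional (Table~\ref{tab:small-w-dimensions}), so each positive-semidefiniteness constraint collapses to a single scalar inequality. That reduction is what makes the computer-assisted verification in Theorem~\ref{th:lower bound} tractable. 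The ``block-diagonalization by $S_d \times S_d$ symmetry'' you mention as a promising idea is exactly this averaging; you should fold it into the construction of $\mu$ from the start rather than hope to recover it afterward.
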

\noindent

In Section~\ref{sec:final-verification} we present the concrete linear-algebraic conjectures that would yield the results $B_{\leq k}(d) = d + 1$ for various choices of $k$ and $d$.
In principle, each such result for fixed $k$ and $d$ would be implied by a finite symbolic computation; we carry this out with computer assistance for $k = 6$ and $d = 6$, but these computations quickly become computationally intractable for larger values.
We leave it as an open problem to find more conceptual proofs of our linear-algebraic claims.

\subsection{Related work}

\subsubsection{Convex relaxations of packing problems}
Methods based on convex optimization give state-of-the-art upper bounds for a range of packing problems.
This idea was pioneered by Delsarte \cite{Delsarte-1972-LPUnrestrictedCodes,Delsarte-1973-LPAssociationSchemes}, who used linear programming relaxations to prove packing bounds in coding theory.
A similar idea based instead on semidefinite programming was introduced by Lov\'{a}sz through his ``$\vartheta$ function'' bound for the independent set problem in finite graphs in the seminal work \cite{Lovasz-1979-ShannonCapacityGraph}.
A related method closer to the setting we consider, for the non-compact but abelian group $\RR^d$ (in which case the semidefinite programs we consider become linear programs as in Delsarte's setting), was proposed for the sphere packing problem in \cite{Cohn_2003} and is an essential ingredient in the recent breakthrough results identifying exactly optimal sphere packings in dimension 8 and 24 \cite{Viazovska-2017-SpherePacking8,Cohn_2017}.
An overview of other applications is given in Table~1.5 of \cite{de2015semidefinite}; in that framework, the kind of bound we study is an instance of a ``two-point semidefinite programming bound'' on a packing problem.

The task of producing dual bounds on such techniques has not been studied as much.
As a first step, \cite{de2015semidefinite} proposed a unified framework in the style of the sum-of-squares hierarchy for such duality relations for packing problems.
In a more concrete application, the notable recent work \cite{cohn2021dual} studies the dual of the linear programs of \cite{Cohn_2003}, showing a significant gap between the densest known sphere packings and the upper bounds achievable by linear programs in some dimensions.
Their proof technique takes advantage of the remarkable structure and symmetry of modular forms in these special dimensions.
The analogous construction in our setting will be based on the Fourier matrix as presented in Section~\ref{sec:fourier}, and we discuss some intriguing combinatorial and number-theoretic open questions raised by this construction in Section~\ref{sec:final-verification}.

\subsubsection{Other relaxations for the MUB problem}

Several formulations of upper bounds for MUBs based on convex relaxations besides that of \cite{KMW-2018-PositiveDefiniteMUBs} have been proposed in the literature.
The work \cite{matolcsi2010fourier} uses that the existence of MUBs is related to the existence of MUBs further restricted so that each unitary matrix $\bU_i$ is in fact also a Hadamard matrix, and then, viewing the columns of these matrices as separate variables in a packing problem, applies Theorem~\ref{th: upper bound} on the torus to obtain the Welch bound.
This method is further extended in \cite{matolcsi2013systems} to derive various constraints on MUBs in low dimension ($d \leq 5$) and, with weaker results, in arbitrary dimension. 

Another line of work, initiated by \cite{navascues2012sdp} and continued more recently by \cite{GP-2021-MUBOptimizationSymmetry}, uses that computing the size of a MUB may also be cast as an optimization problem over non-commutative (i.e., matrix-valued) variables. Such problems admit a variant of the sum-of-squares hierarchy of convex relaxations, which these works use to formulate different semidefinite programming relaxations.

It would be interesting to investigate both dual bounds in these other frameworks and the possibility that these various approaches might be equivalent in power.
We leave these directions to future work.

\subsection{Organization}

The remainder of the paper is organized as follows.
In Section~\ref{sec:duals} we derive dual programs both for the general problem of optimizing upper bounds on packing problems over compact groups and for the problem of MUBs.
In doing so, we introduce some further background on positive definite measures and the Fourier transform of a measure over a compact group.
In Section~\ref{sec:fourier} we motivate and define the dual certificate we propose for the problem of MUBs.
In Section~\ref{sec: d=6} we prove Theorem~\ref{th:lower bound} using this dual certificate.

\subsection{Notation}

We use the standard notation $[d] = \{1, \dots, d\}$.
For a proposition $P$, we write $\One\{P\}$ for the \emph{indicator} of $P$, which takes the value 1 if $P$ holds and 0 if $P$ does not hold.

We write $U(d)$ and $SU(d)$ for the unitary and special unitary groups of $d \times d$ complex matrices, respectively, $S_d$ for the symmetric group on $[d]$, $\TT^d$ for the torus group or $d$-fold direct product of the complex unit circle with itself, and $G \leq H$ and $G < H$ for the relations of $G$ being a subgroup and a proper subgroup of $H$, respectively.

In part of our discussion, we will write $G$ for an arbitrary compact group.
In this context, we also write $e \in G$ for the identity element and $\nu$ for the Haar measure normalized so that $\nu(G) = 1$.
We write $\sI(G)$ for the collection of non-isomorphic irreducible representations (or \emph{irreps}) of $G$.
We write $1 \in \sI(G)$ for the irreducible trivial representation, and do not introduce any special notation for isomorphism of representations, writing, e.g., $\pi = 1$ for a representation $\pi$ being isomorphic to the trivial representation.

We use boldface lowercase ($\bx$) for vectors and boldface uppercase ($\bX$) for matrices; scalar entries of either are written in the same case but in regular font ($x_i, X_{ij}$).
For a permutation $\sigma \in S_d$, we write $\bm P_{\sigma} \in \{0, 1\}^d$ for the associated permutation matrix, so that $(\bm P_{\sigma} \bx)_i = x_{\sigma(i)}$.

\section{Duals of positive definite function programs}
\label{sec:duals}

\subsection{Packing problems over compact groups}
\label{sec:dual-general}

We now discuss how to obtain dual bounds on optimizations such as those prescribed by Theorem~\ref{th: upper bound}.
To do this, we formulate that optimization as a conic optimization problem.
We keep for the moment to the general setting of \cite{KMW-2018-PositiveDefiniteMUBs} over compact groups, and later will reformulate our main result for the special case of MUBs.

Since in Theorem~\ref{th: upper bound} we may without loss of generality assume that $\int h d\nu = 1$, optimizing the upper bound given by Theorem~\ref{th: upper bound} can be rewritten as
\begin{equation} \label{eq: mub mini}
    B(G, A) = \left\{\begin{array}{ll} 
    \text{infimum of} & h(\bm I) \\ [0.5em]
    \text{subject to} & h \in K \colonequals C_{\succeq 0}(G) \cap C_{\leq 0, A^c}(G), 
    \\[0.6em] & \int h d\nu = 1
    \end{array}\right\}.
\end{equation}
Here $ C_{\succeq 0}(G) \subset C(G)$ is the convex cone of positive definite functions and $C_{\leq 0, A^c}(G) \subset C(G)$ is the convex cone of functions which are non-positive on $A^c$.
In this form, \eqref{eq: mub mini} is a conic program, albeit one over an infinite-dimensional space.
The reader may consult \cite{barvinok2002course} for discussion of convex optimization and conic programs in this general setting.

To proceed towards producing dual certificates for such a program, we note that the dual space of $C(G)$ is the space $\sM(G)$ of Radon measures\footnote{Radon measures are signed measures on the Borel $\sigma$-algebra satisfying certain regularity properties; for the full definition, see Chapter 7 of \cite{folland1999real}.} on $G$, with the dual pairing $\langle f , \mu \rangle \colonequals \int f d \mu$.
The dual program is then
\begin{equation} \label{eq: mub maxi}
    B^*(G, A) = \left\{\begin{array}{ll} 
    \text{supremum of} & c \in \RR \\[0.5em] 
    \text{subject to} & \delta_e - c\nu \in K^*
    \end{array}\right\}.
\end{equation}
Here $K^*$ denotes the dual cone of $K$ and $\delta_e$ is the Dirac measure at the identity.

It is simple to give a hands-on proof of \emph{weak duality}, i.e., $B(G, A) \geq B^*(G, A)$.
This is because, if $f$ is feasible for \eqref{eq: mub mini} and $c$ for \eqref{eq: mub maxi}, then by the definition of $K^*$ we have
\[
    0 \leq \langle f, \delta_e - c\nu \rangle = f(e) - c.
\]
Thus to prove lower bounds on \eqref{eq: mub mini}, it suffices to produce feasible $c$ for \eqref{eq: mub maxi}.

To study the dual program, instead of $K^*$, we use the simpler cone
\[
    K^* = (C_{\succeq 0}(G) \cap C_{\leq 0, A^c}(G))^* \supset C_{\succeq 0}(G)^* + C_{\leq 0, A^c}(G)^*.
\]
Thus, to show $\delta_e - c\nu \in K^*$, it suffices to produce $\lambda \in C_{\succeq 0}(G)^*$ and $\mu$ an unsigned measure
with support in $\overline{A^c}$ (which must then belong to $-C_{\leq 0, A^c}(G)^*$) which fulfill
\[
    \delta_e - c\nu = \lambda - \mu.
\]
We rewrite this as 
\[
    \delta_e + \mu \succeq  c\nu,
\]
which means
\begin{equation} \label{eq:dual-feas-cond}
    \delta_e + \mu - c\nu \in C_{\succeq 0}(G)^* \equalscolon \mathcal{M}_{\succeq 0}(G).
\end{equation}

We next give the characterization of this convex cone of measures.
First, we define the Fourier transform of a measure on $G$.
We recall that $\sI(G)$ denotes the irreducible representations (irreps) of a compact group $G$ distinct up to isomorphism. Standard results of representation theory imply that all such irreps are finite-dimensional and that any arbitrary representation splits into a (possibly infinite) direct sum of irreps.

\begin{Def}[Fourier transform of a measure]
    For $\lambda \in \sM(G)$ and $\pi$ a unitary representation of $G$, we define
    \[ \what{\lambda}(\pi) \colonequals \int_G \pi(g) d\lambda(g). \]
\end{Def}

The following is the key characterization of positive definite functions on compact groups.
This follows for instance by combining Proposition 3.35 and Theorem 3.20 in \cite{folland2016course}.
\begin{Th}\label{th: pos def rep}
    $f \in C_{\succeq 0}(G)$ if and only if, for some unitary representation $\pi$ of $G$ on a Hilbert space $\mathcal{H}$  and some $v \in \mathcal{H}$, for all $g \in G$ we have $f(g) = \langle \pi(g) v, v \rangle$.
\end{Th}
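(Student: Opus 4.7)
The plan is to prove each direction separately, using the classical Gelfand--Naimark--Segal (GNS) construction for the non-trivial direction.

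For the easy direction ($\Leftarrow$), suppose $f(g) = \langle \pi(g) v, v \rangle_{\mathcal{H}}$ for a unitary representation $\pi$ on $\mathcal{H}$ and some $v \in \mathcal{H}$. For any $g_1, \dots, g_n \in G$ and $c_1, \dots, c_n \in \CC$, using $\pi(g_i^{-1}) = \pi(g_i)^*$ one computes
\[
\sum_{i,j=1}^n \overline{c_i} c_j f(g_i^{-1} g_j) = \sum_{i,j=1}^n \overline{c_i} c_j \langle \pi(g_j) v, \pi(g_i) v \rangle_{\mathcal{H}} = \Bigl\| \sum_{j=1}^n c_j \pi(g_j) v \Bigr\|_{\mathcal{H}}^2 \geq 0,
\]
so $f \in C_{\succeq 0}(G)$.

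For the hard direction ($\Rightarrow$), I would construct $\mathcal{H}$, $\pi$, and $v$ directly from $f$. Let $V$ denote the complex vector space of finitely supported functions $G \to \CC$, spanned by point masses $\{[g] : g \in G\}$, and define on $V$ the sesquilinear form
\[
\langle \phi, \psi \rangle_f \colonequals \sum_{g, h \in G} \overline{\phi(g)} \psi(h) f(h^{-1} g).
\]
Positive definiteness of $f$ is precisely the statement $\langle \phi, \phi \rangle_f \geq 0$, so this is a positive semidefinite Hermitian form. By Cauchy--Schwarz, $N \colonequals \{\phi \in V : \langle \phi, \phi \rangle_f = 0\}$ is a subspace, so $\langle \cdot, \cdot \rangle_f$ descends to a genuine inner product on $V / N$; let $\mathcal{H}$ be the Hilbert space completion.

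Next, define left translation $\pi(g)[h] \colonequals [gh]$ on $V$. A change of variables in the defining sum shows $\pi(g)$ is an isometry of $\langle \cdot, \cdot \rangle_f$, so it preserves $N$ and descends to a unitary on $V/N$, extending continuously to $\mathcal{H}$. The homomorphism property $\pi(g_1 g_2) = \pi(g_1) \pi(g_2)$ is immediate. Setting $v \colonequals [e]$, one verifies
\[
\langle \pi(g) v, v \rangle_{\mathcal{H}} = \langle [g], [e] \rangle_f = f(e^{-1} g) = f(g),
\]
completing the identification. The main obstacle I anticipate is verifying \emph{strong continuity} of $\pi$, which is not automatic from the algebraic construction but is needed for $\pi$ to be a representation in the usual sense tacitly assumed in Section~\ref{sec:dual-general}. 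This follows from continuity of $f$: on the dense cyclic subspace spanned by $\{\pi(g_i) v\}$, the matrix coefficient $\langle \pi(g) \pi(g_i) v, \pi(g_j) v \rangle_{\mathcal{H}} = f(g_j^{-1} g g_i)$ is continuous in $g$, and a standard approximation argument extends strong continuity to all of $\mathcal{H}$.
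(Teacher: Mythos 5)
The paper does not give its own proof of this theorem; it simply cites Folland's \emph{A Course in Abstract Harmonic Analysis} (combining Propositions 3.35 and 3.20 there). Your argument is the standard GNS construction for positive definite functions, which is essentially what the cited reference does, and it is correct. Two small points are worth tidying up. First, for the form $\langle\cdot,\cdot\rangle_f$ to be Hermitian (which is what makes Cauchy--Schwarz and hence the null-space argument go through cleanly) you need the identity $f(g^{-1}) = \overline{f(g)}$; this is a standard consequence of positive definiteness (take $n=2$, $g_1 = e$, $g_2 = g$, and vary $c_2$ over $1$ and $i$), but it should be stated rather than absorbed silently into the word ``Hermitian.'' Second, you switch inner-product conventions between the two directions: the easy direction treats $\langle\cdot,\cdot\rangle$ as linear in the first slot when collapsing the double sum into $\bigl\|\sum_j c_j\pi(g_j)v\bigr\|^2$, while the form $\langle\phi,\psi\rangle_f = \sum_{g,h}\overline{\phi(g)}\psi(h)f(h^{-1}g)$ in the hard direction is conjugate-linear in the first slot. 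Neither choice is wrong, but one should be fixed and used throughout. Your handling of strong continuity via matrix coefficients on the dense cyclic subspace and uniform boundedness is exactly the right way to finish.
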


\noindent
The following characterization is then an analog of Bochner's theorem for compact groups.

\begin{Th} \label{thm:pos-def-measures}
    $\sM_{\succeq 0}(G)$ is the set of $\lambda \in G$ with $\what{\lambda}(\pi) \succeq \bm 0$ for all $\pi \in \sI(G)$.
\end{Th}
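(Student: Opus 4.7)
The plan is to unfold the definition of $\sM_{\succeq 0}(G) = C_{\succeq 0}(G)^*$ in terms of the pairing $\langle f, \lambda\rangle = \int f\, d\lambda$ and then apply the characterization of positive definite functions as matrix coefficients of unitary representations (Theorem~\ref{th: pos def rep}). Explicitly, $\lambda \in \sM_{\succeq 0}(G)$ iff $\int f\, d\lambda \geq 0$ for every $f \in C_{\succeq 0}(G)$, and by Theorem~\ref{th: pos def rep} every such $f$ is of the form $f(g) = \langle \pi(g)v, v\rangle$ for some unitary representation $\pi$ of $G$ on a Hilbert space $\sH$ and some $v \in \sH$. Interchanging the integral with the inner product (which is valid because $\pi(g)$ is a uniformly bounded operator-valued function of $g$ and $\lambda$ is a finite Radon measure, so the integrand is bounded and continuous) gives
\[
    \int_G f(g)\, d\lambda(g) \;=\; \int_G \langle \pi(g)v, v\rangle\, d\lambda(g) \;=\; \left\langle \widehat{\lambda}(\pi) v, v\right\rangle.
\]

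The forward direction is then immediate: if $\lambda \in \sM_{\succeq 0}(G)$, then, for any $\pi \in \sI(G)$ (in particular finite-dimensional) and any $v$ in its representation space, the function $g \mapsto \langle \pi(g)v, v\rangle$ is in $C_{\succeq 0}(G)$, so $\langle \widehat{\lambda}(\pi)v, v\rangle \geq 0$. Since $v$ was arbitrary, $\widehat{\lambda}(\pi) \succeq \bm 0$.

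For the reverse direction, suppose $\widehat{\lambda}(\pi) \succeq \bm 0$ for every $\pi \in \sI(G)$. Given an arbitrary $f \in C_{\succeq 0}(G)$, write $f(g) = \langle \pi(g)v, v\rangle$ as above. By the Peter--Weyl theorem, any unitary representation of a compact group decomposes as an orthogonal Hilbert-space direct sum of (finite-dimensional) irreducible representations, so $\sH = \bigoplus_\alpha \sH_\alpha$ with $\pi = \bigoplus_\alpha \pi_\alpha$, each $\pi_\alpha \in \sI(G)$. Decomposing $v = \sum_\alpha v_\alpha$ with $v_\alpha \in \sH_\alpha$, the operator $\widehat{\lambda}(\pi)$ respects this direct-sum decomposition, and so
\[
    \left\langle \widehat{\lambda}(\pi) v, v\right\rangle \;=\; \sum_\alpha \left\langle \widehat{\lambda}(\pi_\alpha) v_\alpha, v_\alpha\right\rangle \;\geq\; 0,
\]
where the sum converges absolutely since $\sum_\alpha \|v_\alpha\|^2 = \|v\|^2 < \infty$ and each $\widehat{\lambda}(\pi_\alpha)$ is a finite-dimensional positive semidefinite matrix with operator norm bounded by $|\lambda|(G)$. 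This shows $\int f\, d\lambda \geq 0$ for every $f \in C_{\succeq 0}(G)$, hence $\lambda \in \sM_{\succeq 0}(G)$.

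The main technical point to handle carefully is the Peter--Weyl decomposition step in the reverse direction and in particular the justification that $\widehat{\lambda}(\pi)$ commutes with the direct-sum projections and can thus be expressed blockwise as $\bigoplus_\alpha \widehat{\lambda}(\pi_\alpha)$; this follows because each projection onto $\sH_\alpha$ intertwines $\pi$ with itself, so commutes with $\pi(g)$ for all $g$ and hence with the integral $\widehat{\lambda}(\pi) = \int \pi(g)\, d\lambda(g)$. Everything else is bookkeeping with dominated convergence and the fact that Radon measures are finite.
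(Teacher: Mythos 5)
Your proposal is correct and follows essentially the same route as the paper: apply Theorem~\ref{th: pos def rep} to rewrite $\int f\,d\lambda$ as $\langle \widehat{\lambda}(\pi) v, v\rangle$, and then reduce from general unitary representations to irreducibles via the direct-sum decomposition. You simply fill in details the paper leaves implicit (the Peter--Weyl decomposition, the absolute convergence of the sum over irreducible blocks, and the justification that $\widehat{\lambda}(\pi)$ respects the block structure), which makes the argument more self-contained but does not change the underlying idea.
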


\begin{proof}
   Theorem~\ref{th: pos def rep} tells us that a function $f(g)$ is in $C_{\succeq 0}(G)$  if and only if it is of the form $\langle \pi(g) v, v \rangle$. Integrating against $\lambda$ then gives
   \[
        \int \langle \pi(g) v, v \rangle d \lambda(g) = \langle \widehat{\lambda}(\pi) v, v \rangle,
   \]
   which shows that $\lambda \in \sM_{\succeq 0}(G)$ if and only if  $\widehat{\lambda}(\pi) \succeq \bm 0$ for any representation $\pi$. Since any such $\pi$ is a direct sum of irreducible subspaces, this statement is in turn equivalent to $\what{\lambda}(\pi) \succeq \bm 0$ for all $\pi \in \sI(G)$.
\end{proof}

\begin{Rem}[Hermitian Fourier transform]
    We note that the definition of positive definiteness presumes that $\what{\lambda}(\pi)$ is also Hermitian for all $\pi \in \sI(G)$.
    The Fourier transform being Hermitian turns out to be equivalent to a suitable notion of self-adjointness for the measure $\lambda$ itself, under an involution $\lambda \mapsto \lambda^*$ which makes $\sM(G)$ into a Banach-$*$ algebra.
    We discuss this further in Appendix~\ref{app:star-algebra}.
    In the sequel we will only be concerned with real-valued measures for which this is equivalent to $\mu$ being invariant under inversion, that is, $\mu$ must be equal to its pushforward under the map $g \mapsto g^{-1}$.
\end{Rem}

\noindent
It is also simple to compute the Fourier transforms of the two specific measures appearing above: the Fourier transform of the Haar measure is given by $\what{\nu}(\pi) = \One\{\pi = 1\}$, and that of the Dirac mass is given by $\what{\delta_e}(\pi) = \bm I_{\dim(\pi)}$.

We are now equipped to identify when \eqref{eq:dual-feas-cond} will hold for a given $\mu$.
By the above computations of Fourier transforms, we must consider two cases: when $\pi = 1$ is the trivial representation, then the corresponding condition is $\mu(G) \geq c - 1$.
Otherwise, the condition is simply that $\what{\mu}(\pi) \succeq -\bm I$.
Reorganizing these conditions, we find the following formulation of weak duality.

\begin{Lemma}
    \label{lemma: smallest eigenvalue general}
    If $\mu \in \sM(G)$ a probability measure with $\mathrm{supp}(\mu) \subseteq \overline{A^c}$ and $c > 0$ are such that for any $\pi \in \sI(G)$ we have $\widehat{\mu}(\pi) = \int \pi d\mu \succeq -c\bm I$, then
    \[ B(G, A) \geq 1 + \frac{1}{c}. \]
\end{Lemma}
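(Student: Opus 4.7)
The plan is to invoke the dual framework developed immediately before the statement and realize the hypothesized measure $\mu$, after a suitable rescaling, as the certificate $\tilde{\mu}$ appearing in the sufficient condition \eqref{eq:dual-feas-cond} for membership in $K^*$. Setting $\tilde\mu \colonequals \mu/c$ yields an unsigned measure supported in $\overline{A^c}$ (preservation of support under scaling by a positive constant), and the target weak-duality constant is $c' \colonequals 1 + 1/c$.

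First, I would verify feasibility of $c'$ in the dual program \eqref{eq: mub maxi}. By the reduction worked out just before the lemma, it suffices to check that $\delta_e + \tilde\mu - c'\nu \in \sM_{\succeq 0}(G)$, which by Theorem~\ref{thm:pos-def-measures} is equivalent to positive semidefiniteness of its Fourier transform on every irrep. Using the computed Fourier transforms $\what{\delta_e}(\pi) = \bm I_{\dim(\pi)}$ and $\what{\nu}(\pi) = \One\{\pi = 1\}$, two cases must be checked: for $\pi = 1$ the scalar value is $1 + \tilde\mu(G) - c' = 1 + 1/c - (1+1/c) = 0$ since $\mu$ is a probability measure; for $\pi \neq 1$ one gets
\[
    \bm I + \tfrac{1}{c}\what{\mu}(\pi) \succeq \bm I + \tfrac{1}{c}(-c\bm I) = \bm 0,
\]
using exactly the assumed inequality $\what{\mu}(\pi) \succeq -c\bm I$.

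Given this, the decomposition
\[
    \delta_e - c'\nu = (\delta_e + \tilde\mu - c'\nu) - \tilde\mu
\]
exhibits $\delta_e - c'\nu$ as an element of $C_{\succeq 0}(G)^* + C_{\leq 0, A^c}(G)^* \subseteq K^*$, since $\tilde\mu$ is a non-negative measure with support in $\overline{A^c}$ and so $-\tilde\mu \in C_{\leq 0, A^c}(G)^*$. Hence $c'$ is feasible for \eqref{eq: mub maxi}, and then weak duality $B(G,A) \geq B^*(G,A)$ (already observed via the dual pairing $\langle f, \delta_e - c\nu \rangle \geq 0$) yields $B(G,A) \geq 1 + 1/c$.

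There is no substantive obstacle here: the content of the lemma is really a clean packaging of the dual framework, and the only non-bookkeeping step is the normalization choice $\tilde\mu = \mu/c$, forced by the requirement that the non-trivial-irrep inequality become exactly $\what{\tilde\mu}(\pi) \succeq -\bm I$. The mild point worth highlighting in the write-up is that at $\pi = 1$ the trivial-irrep equation pins down $c'$ as $1 + \tilde\mu(G) = 1 + 1/c$, which is why the conclusion takes precisely this form.
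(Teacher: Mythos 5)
Your proposal is correct and matches the paper's own (implicit) proof: the paper derives the two cases $\mu(G) \ge c - 1$ (trivial irrep) and $\what{\mu}(\pi) \succeq -\bm I$ (non-trivial irrep) immediately before the lemma and then states the lemma as a reorganization of those conditions, which is exactly the rescaling $\tilde\mu = \mu/c$ and decomposition $\delta_e - c'\nu = (\delta_e + \tilde\mu - c'\nu) - \tilde\mu$ you write out. No substantive difference.
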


\subsection{Duals over finite-dimensional subspaces of functions}
\label{sec:dual-finite}

The space of positive definite functions on a compact group is an infinite-dimensional vector space.
In practice, to try to produce concrete positive definite function bounds, we would search for ``good'' positive definite functions in some convenient subspace, for example, the space of low-degree polynomials. 

These infinitely many degrees of freedom in the primal program are reflected in the infinitely many constraints on $\mu$ in the dual program---one for each irrep of $G$.
Fortunately, if we restrict our attention to a finite-dimensional subspace of positive definite functions, then there is an associated dual program with likewise finitely many positive semidefinite constraints on $\mu$.

The following is a natural class of positive definite functions to restrict our attention to in the primal program: given a list of irreps $\sR = \{\pi_0 = 1, \pi_1, \dots, \pi_m\}$ of $G$ and corresponding positive semidefinite matrices $\{\bX_0 = 1, \bX_1, \dots, \bX_m\}$ such that $\bX_i \in \CC^{\dim(\pi_i) \times \dim(\pi_i)}$, we may take
\[ h(g) \colonequals \sum_{i = 0}^m \langle \bX_i, \pi_i(g) \rangle = 1 + \sum_{i = 1}^m \tr(\pi_i(g) \bX_i). \]
Functions of this form can be seen to be the intersection of the cone $C_{\succeq 0}(G)$ and the finite dimensional vector subspace of $C(G)$ spanned by the matrix-coefficients of the $\pi_i$.
We will see below that, in fact, in the MUB setting we may choose the $\pi_i$ appropriately to allow $h$ to be an arbitrary positive definite low-degree polynomial.

Note that, in general, we would have $\int h d\mu = \tr(\bX_0)$, so our choice $\bX_0 = 1$ ensures that $h$ satisfies the same normalization as in our definition of $B(G, A)$.
Thus, using the notation $\mathcal{S}_{\ell}^+ \subset \mathbb{C}^{\ell \times \ell}$ for the cone of positive semidefinite matrices, one can see that under this restriction the optimization \eqref{eq: mub mini} transforms into:
\begin{align} \label{eq:primal-finite}
    B_{\sR}(G, A) &\colonequals \left\{\begin{array}{ll} \text{infimum of} & 1 + \sum_{i = 1}^m \tr(\bX_i) \\[0.5em] \text{subject to} & 1 + \sum_{i = 1}^m \tr(\pi_i(g) \bX_i^{\top}) \leq 0 \text{ for all } g \in A^c, \\[0.5em] & \bX_i \in \sS_{\dim(\pi_i)}^+ \text{ for all } i \in [m] \end{array}\right\} \\
    &\geq B(G, A). \nonumber
\end{align}
This is almost an ordinary semidefinite program, except that if $A^c$ is infinite (as in the MUB setting), there are an infinite number of inequality constraints.

The dual program then allows more measures; in particular,
if we put $\lambda = \delta_e + \mu$, then the dual program simply optimizes over measures for which the finitely many Fourier coefficients $\widehat{\lambda}(1),\widehat{\lambda} (\pi_1), \dots, \widehat{\lambda}(\pi_m)$ are positive definite matrices, since these are precisely those measures for which we have
\begin{align*}
    \langle f, \lambda \rangle = \int \sum_{i=0}^m \tr( \pi_i(g) \bm{X}_i^\top ) d \lambda(g)  =  \sum_{i=1}^m \tr( \widehat{\lambda}(\pi_i)\bm{X}_i^\top) \geq 0
\end{align*}
for any $f$ feasible for \eqref{eq:primal-finite}.
Thus an analogue of Lemma~\ref{lemma: smallest eigenvalue general} holds after computing the dual.

\begin{Lemma}\label{four coeff feasible finitized}
    Let $\pi_0 = 1, \pi_1, \dots, \pi_m$ be distinct irreps of $G$.
    If $\mu \in \sM(G)$ is a probability measure with $\mathrm{supp}(\mu) \subseteq \overline{A^c}$ such that $\widehat{\mu}(\pi_i) = \int \pi_i(g) d\mu(g) \succeq -c\bm I$ for all $i \in [m]$, then
    \[ B_{\{1, \pi_1, \dots, \pi_m\}}(G, A) \geq 1 + \frac{1}{c}. \]
\end{Lemma}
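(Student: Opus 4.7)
The plan is to carry out weak duality for the finite-dimensional primal program \eqref{eq:primal-finite}, mirroring the argument sketched before Lemma~\ref{lemma: smallest eigenvalue general}. Fix an arbitrary primal-feasible $h(g) = 1 + \sum_{i=1}^m \tr(\pi_i(g)\bX_i^\top)$ with each $\bX_i \succeq \bm 0$ and $h \leq 0$ on $A^c$, and set $\lambda \colonequals \delta_e + \tfrac{1}{c}\mu$. I then evaluate $\int h \, d\lambda$ in two ways and compare.

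On the one hand, $h$ is continuous and non-positive on $A^c$, hence on $\overline{A^c}$, which contains $\mathrm{supp}(\mu)$. Thus $\int h \, d\mu \leq 0$, giving
\[
    \int h \, d\lambda = h(e) + \frac{1}{c}\int h \, d\mu \leq h(e).
\]

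On the other hand, linearity of the Fourier transform together with $\widehat{\delta_e}(\pi_i) = \bm I_{\dim(\pi_i)}$ and $\lambda(G) = 1 + 1/c$ gives
\[
    \int h \, d\lambda = \lambda(G) + \sum_{i=1}^m \tr\bigl(\widehat{\lambda}(\pi_i)\bX_i^\top\bigr) = 1 + \frac{1}{c} + \sum_{i=1}^m \tr\Bigl(\bigl(\bm I + \tfrac{1}{c}\widehat{\mu}(\pi_i)\bigr)\bX_i^\top\Bigr).
\]
The hypothesis $\widehat{\mu}(\pi_i) \succeq -c\bm I$ makes each matrix $\bm I + \tfrac{1}{c}\widehat{\mu}(\pi_i)$ positive semidefinite, and $\bX_i^\top$ is positive semidefinite whenever $\bX_i$ is (transposition preserves the spectrum of a Hermitian matrix), so the trace of each product is non-negative. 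Hence $\int h \, d\lambda \geq 1 + 1/c$.

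Chaining the two bounds yields $h(e) \geq 1 + 1/c$ for every primal-feasible $h$. Taking the infimum over such $h$ gives $B_{\{1,\pi_1,\dots,\pi_m\}}(G,A) \geq 1 + 1/c$, as claimed. There is no real obstacle — the argument is simply the finite-dimensional specialization of the weak-duality computation leading to Lemma~\ref{lemma: smallest eigenvalue general}, and the only care needed is in tracking the transpose convention in the primal parametrization of $h$ so that the matrix inequalities line up correctly.
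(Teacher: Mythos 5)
Your proof is correct and follows the same weak-duality route the paper sketches: integrate a primal-feasible $h$ against $\lambda = \delta_e + \tfrac{1}{c}\mu$, use $\mathrm{supp}(\mu) \subseteq \overline{A^c}$ to bound $\int h\,d\lambda \leq h(e)$, and use the Fourier-coefficient inequalities together with $\bX_i^\top \succeq \bm 0$ to bound $\int h\,d\lambda \geq 1 + 1/c$. The paper leaves this as a one-line remark ("an analogue of Lemma~\ref{lemma: smallest eigenvalue general} holds after computing the dual"); your writeup is just the fully explicit version of that argument.
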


\subsection{Duals for the problem of MUBs}

Let us now state the consequences of the previous two sections in the specific setting of MUBs, where we take $G = U(d)$ and $A = H(d)^c$. We first apply the results of Section~\ref{sec:dual-general}, which applied to general positive definite functions.

\begin{Lemma}\label{lem:dual-mub}
    If $\mu \in \mathcal{M}(G)$ is a probability measure with $\mathrm{supp}(\mu) \subseteq H(d) $ such that for any irrep $\pi$ of $U(d)$ we have $\widehat{\mu}(\pi) = \int \pi(\bU) d\mu(\bU) \succeq -c \bm I$ for $c > 0$, then
    \[ B(d) \geq 1 + \frac{1}{c}. \]
\end{Lemma}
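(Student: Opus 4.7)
The plan is that this statement is essentially just the specialization of the general duality result Lemma~\ref{lemma: smallest eigenvalue general} to the MUB setting where $G = U(d)$ and $A = H(d)^c$. So the proof should largely consist of verifying that the hypotheses match up, and invoking the earlier lemma.

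First I would note that by definition $B(d) = B(U(d), H(d)^c)$, so it suffices to produce a lower bound on $B(U(d), H(d)^c)$. Next, to apply Lemma~\ref{lemma: smallest eigenvalue general} with $A = H(d)^c$, I need to know that $\overline{A^c} = \overline{H(d)} = H(d)$. This is true because $H(d) = \{\bU \in U(d) : |U_{i,j}|^2 = 1/d \text{ for all } i,j \in [d]\}$ is the intersection of $U(d)$ with the closed set cut out by the polynomial equations $|U_{i,j}|^2 = 1/d$, and is therefore itself closed in $U(d)$. So the hypothesis $\mathrm{supp}(\mu) \subseteq H(d)$ is exactly the hypothesis $\mathrm{supp}(\mu) \subseteq \overline{A^c}$ required by Lemma~\ref{lemma: smallest eigenvalue general}.

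Finally, the positive definiteness conditions $\widehat{\mu}(\pi) \succeq -c\bm I$ for all $\pi \in \sI(U(d))$ are exactly the hypothesis of Lemma~\ref{lemma: smallest eigenvalue general}, so applying that lemma directly yields $B(U(d), H(d)^c) \geq 1 + 1/c$, which is the desired bound. There is no real obstacle here since the statement is a direct specialization; the only thing one might want to double-check is that the irreducible representations of $U(d)$ (all of which are finite-dimensional, by compactness of $U(d)$) are the correct class over which to quantify, but this is already built into the general statement.
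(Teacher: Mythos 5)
Your proposal is correct and matches the paper's approach exactly: Lemma~\ref{lem:dual-mub} is stated in the paper precisely as the specialization of Lemma~\ref{lemma: smallest eigenvalue general} to $G = U(d)$ and $A = H(d)^c$, with no separate proof given. Your brief verification that $H(d)$ is closed (so $\overline{A^c} = H(d)$) is a correct and worthwhile detail that the paper leaves implicit.
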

\noindent
In particular, if one could find such a measure $\mu$ for $c = 1 / d$, then Conjecture~\ref{conj: strong lower bound} would be proven.

We may also give an analogous result for $B_{\leq k}(d)$, the restriction of the positive definite function bound to polynomials of degree at most $k$.
Here, we must introduce a small amount of further representation theory.
Note that $U(d)$ admits a $d$-dimensional representation in which $\bU \in U(d)$ simply acts by matrix multiplication.
We denote this representation by $\rho$; it is irreducible and is often called the \emph{natural representation} of $U(d)$.
This also admits a dual representation $\rho^*$, in which $\bU \in U(d)$ acts by matrix multiplication by $\overline{\bU}$. The linear span of the matrix-coefficients of the representations $\rho^{\otimes r} \otimes (\rho^*)^{\otimes r^{\prime}}$ with $r, r^{\prime} \geq 0$ and $r + r^{\prime} \leq k$ then coincides with $\CC_{\leq k}[\bm U, \overline{\bm U}]$, giving us the following.

\begin{Lemma}\label{lem:dual-mub-finite}
    Let $k \geq 1$, and let $\pi_0 = 1, \pi_1, \dots, \pi_m$ be a list of the distinct irreps appearing in the decomposition of $\rho^{\otimes r} \otimes (\rho^*)^{\otimes r^{\prime}}$ for any $r, r^{\prime} \geq 0$ with $r + r^{\prime} \leq k$.
    If $c > 0$ and $\mu \in \sM(G)$ is a probability measure with $\mathrm{supp}(\mu) \subseteq H(d)$ such that, for all $i \in [m]$, $\widehat{\mu}(\pi_i) \succeq -c \bm I$, then
    \[ B_{\leq k}(d) \geq 1 + \frac{1}{c}. \]
\end{Lemma}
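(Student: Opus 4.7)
The plan is to reduce this lemma directly to Lemma~\ref{four coeff feasible finitized}. The essential content is the assertion in the paragraph preceding the lemma: that $\CC_{\leq k}[\bU, \overline{\bU}]$ coincides with the linear span of matrix coefficients of the representations $\rho^{\otimes r} \otimes (\rho^*)^{\otimes r'}$ with $r + r' \leq k$, and hence with the span of matrix coefficients of the irreducible constituents $\pi_0 = 1, \pi_1, \dots, \pi_m$ of those tensor powers.

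First I would verify this span claim directly. A matrix coefficient of $\rho^{\otimes r} \otimes (\rho^*)^{\otimes r'}$ evaluated at $\bU$ is a monomial $U_{i_1 j_1} \cdots U_{i_r j_r} \overline{U}_{i'_1 j'_1} \cdots \overline{U}_{i'_{r'} j'_{r'}}$ of total degree $r + r'$, and as the indices vary these exhaust all such monomials; ranging over $r + r' \leq k$ we thus obtain all of $\CC_{\leq k}[\bU, \overline{\bU}]$. Since each $\rho^{\otimes r} \otimes (\rho^*)^{\otimes r'}$ splits as a direct sum of copies of irreducibles drawn from $\{\pi_0, \dots, \pi_m\}$, the linear span of its matrix coefficients equals the span of matrix coefficients of those irreducibles, so altogether
\[
    \CC_{\leq k}[\bU, \overline{\bU}] = \mathrm{span}\bigl\{\pi_i(\bU)_{ab} : i \in \{0, \dots, m\},\ a, b \in [\dim(\pi_i)]\bigr\}.
\]

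Second, I would use the Peter--Weyl orthogonality relations to pin down the positive definite elements of this subspace. By Schur orthogonality the matrix coefficients of non-isomorphic irreps are mutually orthogonal in $L^2(U(d), \nu)$, so any element $h \in \CC_{\leq k}[\bU, \overline{\bU}]$ decomposes uniquely as $h(\bU) = \sum_{i=0}^m \tr(\pi_i(\bU)\bX_i^{\top})$ for some matrices $\bX_i$. Combining this with Theorem~\ref{th: pos def rep} (applied to each isotypic component, or equivalently by averaging the rank-one form $\langle \pi(\bU) v, v \rangle$ over an orthonormal basis), one finds that $h \in C_{\succeq 0}(U(d))$ if and only if $\bX_i \succeq \bm 0$ for each $i$. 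The normalization $\int h \, d\nu = 1$ becomes $\tr(\bX_0) = 1$, which we may fix as $\bX_0 = 1$ since $\pi_0$ is the trivial irrep. This exactly identifies the optimization defining $B_{\leq k}(d)$ with the optimization defining $B_{\{1, \pi_1, \dots, \pi_m\}}(U(d), H(d)^c)$ from \eqref{eq:primal-finite}.

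Third, the hypothesis on $\mu$ feeds into Lemma~\ref{four coeff feasible finitized}: $\mu$ is a probability measure supported in $H(d) = \overline{H(d)^c}^{\,c}$-complement-of-complement closure (indeed $H(d)$ is closed), and $\widehat{\mu}(\pi_i) \succeq -c \bm I$ for all $i \in [m]$, while $\widehat{\mu}(\pi_0) = \mu(U(d)) = 1 \geq -c$ automatically. Applying Lemma~\ref{four coeff feasible finitized} then yields $B_{\leq k}(d) = B_{\{1, \pi_1, \dots, \pi_m\}}(U(d), H(d)^c) \geq 1 + 1/c$, which is the claim. The only nontrivial step is the characterization of positive definiteness on a finite-dimensional subspace of matrix coefficients; everything else is bookkeeping reducing the lemma to its general-group analogue.
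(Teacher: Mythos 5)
Your proposal is correct and follows essentially the same route as the paper, which presents this lemma as an immediate consequence of Lemma~\ref{four coeff feasible finitized} together with the preceding remark that the matrix coefficients of $\rho^{\otimes r} \otimes (\rho^*)^{\otimes r'}$ for $r + r' \leq k$ span $\CC_{\leq k}[\bU, \overline{\bU}]$. You have simply spelled out the steps the paper leaves implicit: the span identity, the Peter--Weyl/Bochner-type characterization of positive definite elements of that span (which justifies the paper's claim that \eqref{eq:primal-finite} really is $B_{\leq k}(d)$ for this choice of $\sR$), and the reduction to the general-group lemma.
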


\section{The Fourier dual certificate}
\label{sec:fourier}

We now present our dual certificate construction, which is based on a particular class of Hadamard matrices that we describe below.

\subsection{More on Hadamard matrices}
\label{sec:prelim-hadamard}

The classification of MUBs is closely related to the classification of Hadamard matrices, which also contains many open questions for low dimensions.
Still, we will make use of a basic reduction common in their study.
While real Hadamard matrices are discrete objects since their entries have only two possible values, complex Hadamard matrices come in continuous families.
For this reason, the following notion of equivalence has become widely used in the literature.

\begin{Def}[Equivalent Hadamard matrices]\label{def: equiv had}
    We call two Hadamard matrices $\bm{H},\bm{H}'$ \emph{equivalent}, written $\bm{H} \sim \bm{H}'$, if there are permutation matrices $\bm{P},\bm{P}'$ and unitary diagonal matrices (i.e., ones with diagonal entries of unit norm) $\bm{D},\bm{D}'$, such that
    \[
        \bm{H}' = \bm{PDHD'P'} .
    \]
    We denote by $[\bm H]$ the equivalence class of $\bm H$ under this equivalence relation.
\end{Def}
\noindent
The online resource  \cite{cite_key} lists families of Hadamard matrices found to date in low dimensions. For dimensions $d =2,3,5$ there is only one equivalence class of Hadamard matrices, and for $d = 4$ there is one continuous family of them.
In parallel to the difficulties with identifying MUBs, the set of Hadamard matrices is not well-understood for $d = 6$: several continuous families have been found, but it is not known whether further families exist.

Let us be more precise about the group action under which we consider equivalence classes of Hadamard matrices.
We will later see that the group defined below corresponds to an important symmetry of the convex program we will be interested in solving.
\begin{Def}[Generalized permutations] \label{def:gen-perm}
We write $\TT^d$ for the $d$-fold direct product of the group formed by the unit circle of $\CC$ with itself, and $S_d$ for the symmetric group of permutations of $d$ elements.
Each of these may be identified with a subgroup of $U(d)$, with $\bm\lambda \in \TT^d$ corresponding to the diagonal matrix $\bD_{\bm\lambda}$ and $\sigma \in S_d$ corresponding to the permutation matrix $\bP_{\sigma}$.
We identify the group of products $\bP_{\sigma} \bD$ for $\sigma \in S_d$ and $\bD \in \TT^d$, a closed subgroup of $U(d)$, with the semidirect product $\TT^d \rtimes S_d$, and call it the subgroup of \emph{generalized permutations}.
\end{Def}

We also identify a concrete example of a Hadamard matrix in all dimensions that will play an important role in our main construction.
We note that, per our above discussion, this matrix generates the equivalence class of all Hadamard matrices for $d = 2, 3, 5$.
\begin{Def}[Fourier matrix]\label{ex: fourier}
    The \emph{Fourier matrix}, denoted $\bm{F}_d \in H(d)$, is the matrix with entries 
    \[
        (\bm{F}_d)_{jk} = \frac{1}{\sqrt{d}}\exp\left(\frac{2 \pi i }{d} jk\right).
    \]
    For this matrix we adopt the convention of starting indices from zero: $j, k \in \{0, 1, \dots, d - 1\}$.
\end{Def}

Finally, we introduce the operation of averaging a measure over the equivalence classes of Hadamard matrices, which we define below.
The symmetries of the MUB problem ensure that, in fact, we may restrict our attention to mixtures of such measures rather than arbitrary measures supported on $H(d)$; for details on this reduction see Appendix~\ref{sec: symmetries}.

\begin{Def}[Uniform measure on an equivalence class]
    Let $\bm H \in U(d)$ be a Hadamard matrix.
    We then denote by $\mu_{[\bH]}$ the measure integrating functions in $C(G)$ by
    \[
        \int f d \mu_{[ \bm H]} = \left(\frac{1}{d!}\right)^2\sum_{\sigma \in S_d} \sum_{\sigma' \in S_d}
        \left(\frac{1}{(2\pi)^d}\right)^2\int_{\TT^d} \int_{\TT^d} f( \bm P_\sigma \bm D_{\bm\lambda} \bm H  \bm D_{\bm\lambda^{\prime}} \bm P_{\sigma'})\, d \bm \lambda d \bm\lambda^{\prime},
    \]
    where we recall that $\bD_{\bx}$ denotes the diagonal matrix with diagonal entries given by $\bx$ and where $\int_{\TT^d}F(\bm\lambda)d\bm\lambda$ denotes an integral with respect to the Haar measure on $\TT^d$ with the ``surface area'' normalization so that $\int_{\TT^d} d\bm\lambda = (2\pi)^d$.
\end{Def}
\noindent
The measure $\mu_{[\bm H]}$ ``averages'' over the equivalence class $[\bm{H}]$, and one can check that $\mu_{[\bm H]} = \mu_{[\bm H']}$ if $\bm H \sim \bm H'$, justifying the notation.

\subsection{Intuition from dimensions \texorpdfstring{$d \leq 5$}{d <= 5}}\label{sec: lower dim}

It is instructive to begin by looking at dual certificate measures $\mu$ that are optimal for the dual program in the dimensions $d = 2, 3, 4, 5$ where we both know the value of $N(d)$ and have a characterization of all matrices of $H(d)$.

As these dimensions are prime powers, by Proposition~\ref{prop:prime-power-dim} in these cases there exist MUBs of size $d + 1$, $\{ \bm{U}_1, \dots, \bm{U}_{d+1} \}$, for which moreover there are explicit constructions given in \cite{combescure2007circulant,combescure2009block,10.1007/978-3-540-24633-6_10}.
A direct construction of $\mu$ is then
\[ 
    \mu_0 \colonequals \frac{1}{d + 1}\sum_{i \neq j} \delta_{\bm{U}_i^*\bm{U}_j},
\]
which satisfies
\[
    \delta_e + \mu_0 = \frac{1}{d + 1}\sum_{i = 1}^{d + 1} \delta_{\bU_i^* \bU_i} + \frac{1}{d + 1}\sum_{i \neq j} \delta_{\bm{U}_i^*\bm{U}_j} = \frac{1}{d + 1} \sum_{i, j = 1}^{d + 1}\delta_{\bm{U}_i^*\bm{U}_j} \succeq (d + 1) \nu,
\]
the last claim holding by checking the value of the Fourier transform of either side on the trivial representation, in which case both sides equal 1, and on any other irrep $\pi$, in which case $\what{\nu}(\pi) = 0$ while the Fourier transform of the left-hand side is
\begin{align*}
\frac{1}{d + 1} \sum_{i, j = 1}^{d + 1}\what{\delta_{\bm{U}_i^*\bm{U}_j}}(\pi) 
&= \frac{1}{d + 1} \sum_{i, j = 1}^{d + 1}\pi(\bm U_i^*\bm U_j) \\
&= \frac{1}{d + 1}\left(\sum_{i = 1}^{d + 1} \pi(\bm U_i)\right)^*\left(\sum_{i = 1}^{d + 1} \pi(\bm U_i)\right) \\
&\succeq \bm 0,
\end{align*}
where, in the notation of Appendix~\ref{app:rep-Ud}, we have shown that $\delta_e+ \mu_0$ is of the form $\lambda^* * \lambda$.
Thus, $\mu_0$ gives a dual bound of $d + 1$ and is optimal for the dual program in these dimensions.

In fact, as we show in Appendix~\ref{sec: symmetries}, replacing $\delta_{\bH}$ with $\mu_{[\bH]}$ in such a construction does not affect feasibility or the dual value, so the same holds for the ``smoothed'' measure
\[
    \mu \colonequals \frac{1}{d + 1} \sum_{i \neq j} \mu_{[\bm{U}_i^*\bm{U}_j]}.
\]
We then consider what the equivalence classes appearing in the summation are.
In dimension $d = 2,3,5$ there is only one equivalence class of Hadamard matrices, equal to $[\bm F_d]$, so we have $\mu = d \mu_{[\bm F_d]}$.
In dimension $d = 4$ not all Hadamard matrices are equivalent. Instead there is a single continuous family of Hadamard matrices that belong to several different equivalence classes.
However, we do have an explicit construction \cite{brierley2010mutually} of $\bU_1, \dots, \bU_5$ a maximal collection of MUBs in $\CC^4$, and computing the pairwise differences we find that $[\bm{U}_i^*\bm{U}_j] = [\bm{F}_2 \otimes \bm{F}_2]$ for all $i \neq j$ (this is also proved in \cite{matolcsi2013systems}). So, when $d = 4$ the above sum still collapses and we have $\mu = 4\mu_{[\bm{F}_2 \otimes \bm{F}_2]}$.

\subsection{General construction}

It remains unclear if the above pattern will repeat in higher dimensions. There are several variants of constructions of MUBs in prime power dimensions \cite{combescure2007circulant,combescure2009block,10.1007/978-3-540-24633-6_10}, and it as an interesting problem to determine whether the pairwise differences of the associated unitary matrices are all equivalent to a suitable tensor product of Fourier matrices for all of these constructions.

Nonetheless, the above observations make it natural to predict the following general dual certificate construction.
The following directly implies Conjecture~\ref{conj: strong lower bound}.
\begin{Conj}\label{conj: strong dual measure}
    For any $d$, there is a Hadamard matrix $\bm H \in U(d)$ such that
    \begin{equation} \label{eq:hadamard-positivity}
        \delta_e +d \mu_{[\bm H]} \succeq (d+1)\nu,
    \end{equation}
    or, equivalently, so that for all $\pi \in \sI(U(d))$,
    \begin{equation}
        \what{\mu_{[\bm H]}}(\pi) \succeq -\frac{1}{d}\bm I.
    \end{equation}
    Moreover, if $d = p_1^{k_1} \cdots p_m^{k_m}$ is the prime factorization of $d$, then it is possible to take
    \[ \bm H = \bigotimes_{k = 1}^m \bm F_{p_i}^{\otimes k_i}. \]
\end{Conj}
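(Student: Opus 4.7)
The plan is to verify the Fourier-side reformulation: for every $\pi\in \sI(U(d))$, $\what{\mu_{[\bm H]}}(\pi) \succeq -\tfrac{1}{d}\bm I$, where $\bm H = \bigotimes_i \bm F_{p_i}^{\otimes k_i}$. The first step is to exploit that $\mu_{[\bm H]}$ is an average of $\bm H$ under the two-sided action of the subgroup $G_0 \colonequals \TT^d \rtimes S_d$ of generalized permutations. Since representations are multiplicative, this average factors through $\pi(\bm H)$ directly:
\[
\what{\mu_{[\bm H]}}(\pi) \;=\; P_\pi\,\pi(\bm H)\,P_\pi,
\]
where $P_\pi \colonequals \frac{1}{d!\,(2\pi)^d}\sum_{\sigma \in S_d}\int_{\TT^d}\pi(\bm P_\sigma \bm D_{\bm\lambda})\,d\bm\lambda$ is the orthogonal projection of the representation space $V_\pi$ onto its $G_0$-invariant subspace $V_\pi^{G_0}$ (the same projection appears on both sides because $\{\bm P_\sigma \bm D_{\bm\lambda}\}$ and $\{\bm D_{\bm\lambda'} \bm P_{\sigma'}\}$each parametrize all of $G_0$). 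Since $P_\pi\,\pi(\bm H)\,P_\pi$ annihilates $(V_\pi^{G_0})^{\perp}$, the conjecture reduces to showing that $\pi(\bm H)$, sandwiched by $P_\pi$, has no eigenvalue strictly below $-1/d$ on $V_\pi^{G_0}$, which is vacuously true whenever $V_\pi^{G_0} = \{\bm 0\}$.

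Next I would parametrize the nontrivial cases. For $\pi = \pi^\lambda$ an irrep of $U(d)$ with highest weight $\lambda$, torus-invariance forces $\lambda$ into the zero-weight lattice (so in particular $\sum_i \lambda_i = 0$), and a classical Kostka-type formula gives $\dim V_{\pi^\lambda}^{G_0}$ as the multiplicity of the trivial $S_d$-representation inside the zero-weight space. For the \emph{prime-power base case} $d = p^k$, Proposition~\ref{prop:prime-power-dim} supplies $d+1$ MUBs and hence the tight dual certificate $\mu_0 = \tfrac{1}{d+1}\sum_{i\ne j}\delta_{\bm U_i^*\bm U_j}$ described in Section~\ref{sec: lower dim}; provided one verifies that the pairwise differences $\bm U_i^*\bm U_j$ all lie in the equivalence class $[\bm F_p^{\otimes k}]$ (as is the case for $d = 2, 3, 4, 5$ by direct inspection), the symmetry-averaging argument of Appendix~\ref{sec: symmetries} then promotes $\mu_0$ to $d\,\mu_{[\bm F_p^{\otimes k}]}$, settling the conjecture in this case. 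For general $d = \prod_i p_i^{k_i}$, one hopes to bootstrap: the natural embedding $\prod_i U(p_i^{k_i}) \hookrightarrow U(d)$ coming from $\CC^d = \bigotimes_i \CC^{p_i^{k_i}}$ decomposes any irrep $\pi$ of $U(d)$ into a sum of tensor products of irreps of the factors, on each of which $\pi(\bm H)$ acts as a tensor product of already-controlled factor operators.

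The main obstacle is that $G_0$ strictly contains its ``block-diagonal'' subgroup $G_0' \colonequals \prod_i (\TT^{p_i^{k_i}} \rtimes S_{p_i^{k_i}})$, so the extra averaging in $P_\pi$---over permutations of $[d]$ that mix coordinates across different prime-factor blocks---cuts $V_\pi^{G_0}$ down to a subspace that is generally strictly smaller than the tensor product of per-factor invariant subspaces. Controlling how this further projection acts on the spectrum of $\pi(\bm H)$ appears to require a uniform structural statement, plausibly a Schur-Weyl-type decomposition of $\CC^d$ as a $U(d) \times G_0$-bimodule coupled with combinatorial identities relating Kostka numbers for $d$ to those for its prime-power divisors. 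Absent such a reduction, one is left with irrep-by-irrep verification---the computer-assisted route taken for $(k, d) = (6, 6)$ in Section~\ref{sec: d=6}---which combinatorially explodes for larger parameters and is the reason the present statement is proposed only conjecturally.
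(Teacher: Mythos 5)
This statement is explicitly a conjecture in the paper, and the paper does not prove it: the text only verifies the $d\le 5$ cases directly (Section~\ref{sec: lower dim}), proves the degree-$\le6$ restriction for $d=6$ (Theorem~\ref{th:lower bound}), and otherwise leaves it open. Your proposal is not a proof either, and you say so honestly in your last paragraph, so there is no gap to flag---but it is worth comparing your analysis against the paper's.

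Your reduction $\what{\mu_{[\bm H]}}(\pi)=P_\pi\,\pi(\bm H)\,P_\pi$ with $P_\pi=\bm\Pi_{\TT^d\rtimes S_d}$, and the observation that one only needs to control the spectrum on the zero-weight, $S_d$-invariant subspace $V_\pi^{G_0}=\widetilde V_\pi$, is exactly the paper's setup (beginning of Section~\ref{sec: d=6}). Your discussion of the prime-power case is also aligned with the paper, and you correctly phrase it as conditional: it hinges on showing that all pairwise differences $\bm U_i^*\bm U_j$ of a maximal family of MUBs in $\CC^{p^k}$ lie in $[\bm F_p^{\otimes k}]$. One thing worth making explicit is that the paper itself flags this as an open question even for prime powers $p^k>5$ (``it is an interesting problem to determine whether the pairwise differences of the associated unitary matrices are all equivalent to a suitable tensor product of Fourier matrices''), so the base case of your intended induction is not yet available beyond the hand-checked dimensions. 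Your identification of the second obstruction---that $G_0=\TT^d\rtimes S_d$ strictly contains the block-diagonal product $G_0'=\prod_i(\TT^{p_i^{k_i}}\rtimes S_{p_i^{k_i}})$, so that $P_\pi$ does not factor along $\CC^d\cong\bigotimes_i\CC^{p_i^{k_i}}$ and per-factor control of $\pi(\bm H)$ does not directly transfer---is a real and well-stated difficulty that the paper alludes to only implicitly via the intractability of the $f(\bm S)$ quantities in Section~\ref{sec:final-verification}. Net: your proposal is a faithful reconstruction of the paper's motivation for the conjecture rather than a proof of it, and it does not claim otherwise.
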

\noindent
We have seen above that this is true for $2 \leq d \leq 5$.
In the remainder of the paper we will give partial results towards this conjecture for $d = 6$.
In that case, $\bm{F}_6, \bm{F}_2 \otimes \bm{F}_3 \text{ and } \bm{F}_3 \otimes \bm{F}_2$ are all equivalent Hadamard matrices, so $\mu_{[\bm{F}_6]} = \mu_{[ \bm{F}_2 \otimes \bm{F}_3]} = \mu_{[ \bm{F}_3 \otimes \bm{F}_2]}$ and we may simply take $\bH = \bF_6$.
This is true more generally for $d$ a squarefree number (a product of distinct primes), but not otherwise (as may be checked for the case $d = 4$).

\section{Dual bound in dimension \texorpdfstring{$d=6$}{d = 6}: Proof of Theorem~\ref{th:lower bound}}\label{sec: d=6}

In this section, we focus on the choice $\bH = \bF_6$.
By Lemma~\ref{lem:dual-mub-finite}, it suffices to show that $\what{\mu_{[\bH]}}(\pi) \succeq -\frac{1}{6}\bm I$ for all irreps $\pi$ of $U(6)$ that appear in decompositions of $\rho^{\otimes r} \otimes (\rho^*)^{\otimes r^{\prime}}$ for any $r, r^{\prime} \geq 0$ with $r + r^{\prime} \leq 6$, where $\rho$ is the natural representation of $U(d)$.

Thus we will first need to identify this collection of irreps, and then develop tools for showing the necessary positivity condition.
We give deeper background on the representation theory of $U(d)$ in Appendix~\ref{app:rep-Ud}, but here we give a brief overview that suffices to specify our calculations.

\subsection{Overview of necessary representation theory of \texorpdfstring{$U(d)$}{U(d)}}

$\sI(U(d))$ is parametrized by tuples $\bw \in \ZZ^d$ whose entries are decreasing, which we denote $\bw \in \ZZ^d_{\downarrow}$.
We refer to $\bw$ as the \emph{weight} of the associated $\pi$.
We also write $\lvert \bw \rvert \colonequals \sum_{i = 1}^d \lvert w_i \rvert$.

There are two basic representations within whose tensor products we may identify instances of each $\pi$.
The first is the representation $\rho$ discussed above and in Lemma~\ref{lem:dual-mub-finite}.
The second is a special non-trivial one-dimensional representation $\eta$ with $\eta(\bU) = \overline{\det(\bU)} = \det(\bU)^{-1}$.

We may produce a concrete instance of $\pi$ indexed by $\bw$ as follows.
Let $f_i = w_i - w_d$, this $\bf$ forming a partition of $n = n(\bw) \colonequals \sum_{i = 1}^d (w_i - w_d)$.
Then, $\pi$ will be a subrepresentation of $\eta^{\otimes \lvert w_d \rvert} \otimes \rho^{\otimes n}$, on which $\bU \in U(d)$ acts as the matrix $\det(\bU)^{-\lvert w_d \rvert} \bU^{\otimes n}$.
Specifically, $\pi$ may be identified as the subrepresentation on the image of the \emph{Young symmetrizer} associated to any \emph{Young tableau} whose \emph{shape} is the partition $\bbf$.
We define these notions in greater detail in Appendix~\ref{app:rep-Ud}, but, in short, the Young symmetrizer is a linear operator $\bY_{\bbf}: (\CC^d)^{\otimes n} \to (\CC^d)^{\otimes n}$ with a combinatorial definition involving the signs of certain permutations associated to $\bbf$.
With a particular default choice of Young tableau associated to $\bbf$ as detailed in Appendix~\ref{app:rep-Ud}, we let $V_{\bw} \subset (\CC^d)^{\otimes n}$ denote this subspace, which is then a realization of $\pi$ indexed by $\bw$.
We write $\pi_{\bw}$ for this representation when we wish to not be particular about this specific realization.

Finally, as we show in Corollary~\ref{weights pol degree}, the decomposition of $\rho^{\otimes r} \otimes (\rho^*)^{\otimes r^{\prime}}$ into irreps can involve only those $\pi_{\bw}$ with $\lvert \bw \rvert \leq r + r^{\prime}$, of which there are finitely many.

\subsection{Initial steps}

Suppose $\pi \in \sI(U(d))$.
Expanding the definition,
\begin{align*}
    \widehat{\mu_{[\bH]}}(\pi) 
    &= \int \pi(\bU) d\mu_{[\bH]}(\bU) \\
    &= \frac{1}{(d!)^2}\sum_{\sigma \in S_d} \sum_{\sigma' \in S_d} \int_{\TT^d}\int_{\TT^d}\pi( \bm P_\sigma \bm D_{\bm\lambda} \bm H  \bm D_{\bm\lambda^{\prime}} \bm P_{\sigma'})d\bm\lambda d\bm\lambda^{\prime} \\
    &= \bm \Pi_{S_d} \bm \Pi_{\TT^d} \pi(\bH) \bm \Pi_{\TT^d} \bm \Pi_{S_d},
\end{align*}
where we set
\begin{align*}
    \bm \Pi_{S_d} &\colonequals \frac{1}{d!}\sum_{\sigma \in S_d} \pi(\bm P_{\sigma}), \\
    \bm \Pi_{\TT^d} &\colonequals \int_{\TT^d} \pi(\bD_{\bm\lambda})d\bm\lambda = \frac{1}{(2\pi)^d} \int_0^{2\pi}\dots \int_0^{2\pi} \pi(\bm D_{e^{i t_1}, \dots, e^{i t_d}})\, dt_1 \cdots d t_d.
\end{align*}
These are projections to the subspaces of $\CC^{\dim \pi}$ invariant under the actions of the subgroups $S_d$ and $\TT^d$ of $U(d)$ (with the former identified as the subgroup of permutation matrices).
In fact, we have 
\begin{align*}
    \bm\Pi_{\mathbb{T}^d \rtimes S_d} \colonequals \bm \Pi_{S_d} \bm{\Pi}_{\mathbb{T}^d} = \bm{\Pi}_{\mathbb{T}^d} \bm \Pi_{S_d},
\end{align*}
as may be checked for instance by recognizing that either product involves an integration with respect to a measure over $\TT^d \rtimes S_d$ that satisfies the properties of a Haar measure.
Thus this product is likewise a projection to the subspace invariant under the group of generalized permutations $\TT^d \rtimes S_d$ (see Definition~\ref{def:gen-perm}).
In effect, we need only work over this subspace to verify the necessary positivity condition for $\pi$, so it will play a crucial role in our analysis.
We thus define:
\begin{equation}
    \widetilde{V}_{\pi} \colonequals \mathrm{img}(\bm\Pi_{\mathbb{T}^d \rtimes S_d}).
\end{equation}

Our plan is then as follows.
First, we will develop some machinery for computing $\dim(\widetilde{V}_{\pi})$, and we will see that for the particular subspace of polynomials we are interested in, we will only need to consider $\pi$ for which $\dim(\widetilde{V}_{\pi}) \leq 1$.
When $\dim(\widetilde{V}_{\pi}) = 0$, then $\what{\mu_{[\bH]}}(\pi) = 0$ and the condition we need holds immediately.
The only non-trivial case will then be when $\dim(\widetilde{V}_{\pi}) = 1$.
In this case, for some $\bv$ we can write $\bm\Pi_{\TT^d \rtimes S_d} = \bv\bv^{\top}$, and the only non-zero eigenvalue of $\what{\mu_{[\bH]}}(\pi)$ is $\bv^{\top} \pi(\bH) \bv$, so it suffices to perform the \emph{scalar} computation of this value.

\subsection{Projection to and interpretation of \texorpdfstring{$\widetilde{V}_{\pi}$}{tildeV\_pi}}

We now proceed to a concrete description of $\widetilde{V}_{\pi}$, which we will see is actually a natural object from the perspective of the representation theory of $U(d)$ and even one that has been studied occasionally in the past.
Let us write $\widetilde{V}_{\bm w}$ for $\widetilde{V}_{\pi}$, where $\pi$ is indexed by $\bw \in \ZZ^d_{\downarrow}$.
Per the above discussion, the subspace $\widetilde{V}_{\bw} \subset V_{\bw}$ is that which is fixed under the actions of the subgroups $\TT^d$ and $S_d$ of $U(d)$.
As we have seen above, the projections to the respective fixed subspaces of either group commute, so we may apply them in either order to identify $\widetilde{V}_{\bw}$.
Thus let us consider these projections one at a time below.

\subsubsection{Subspace invariant under \texorpdfstring{$\mathbb{T}^d$}{T\^d}}
\label{sec:subspace-Td}

We first consider when $\bm \Pi_{\TT^d} \neq \bm 0$.
Computing the trace, we find
\begin{align*}
    \tr(\bm\Pi_{\mathbb{T}^d}) 
    &= \tr\left(\int_{\mathbb{T}^d} \pi(\bm D_{\bm\lambda})d\bm \lambda\right) \\
    &= \int_{\mathbb{T}^d} \tr(\pi(\bm D_{\bm\lambda}))d\bm \lambda \\
    &= \int_{\TT^d} \left(\sum_{i = 1}^d \lambda_i\right)^n \prod_{i = 1}^d \lambda_i^{-\lvert w_d \rvert} d\bm\lambda.
\end{align*}
Visibly, upon expanding the power of $\sum_{i = 1}^d \lambda_i$, each term that is not a constant will integrate to zero, and thus the above will be non-zero if and only if $n = d\lvert w_d \rvert$, which is equivalent to $\sum_{i = 1}^d w_i = 0$.
We then find the following substantial restriction.
\begin{Lemma}\label{inv diag}
If $\bm\Pi_{\mathbb{T}^d} \neq \bm 0$, then the weight $\bm{w}$ of $\pi$ satisfies $\sum_{i=1}^d w_i = 0$.
\end{Lemma}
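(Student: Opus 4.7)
The plan is to interpret $\bm\Pi_{\TT^d}$ representation-theoretically and reduce the claim to a combinatorial fact about weights. Since $\bm\Pi_{\TT^d}$ is the Haar average of the unitary action $\bm\lambda \mapsto \pi(\bm D_{\bm\lambda})$ of the compact abelian group $\TT^d$ on $V_{\bw}$, it is the orthogonal projection onto the $\TT^d$-fixed subspace. Simultaneously diagonalizing this commuting family of unitaries yields the weight decomposition $V_{\bw} = \bigoplus_{\bm\mu \in \ZZ^d} V_{\bw}^{\bm\mu}$, where $\pi(\bm D_{\bm\lambda})$ acts on $V_{\bw}^{\bm\mu}$ by the scalar $\prod_i \lambda_i^{\mu_i}$. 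The $\TT^d$-fixed subspace is exactly $V_{\bw}^{\bm 0}$, so $\bm\Pi_{\TT^d} \ne \bm 0$ is equivalent to the zero weight appearing in $\pi_{\bw}$.

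The next step is to compute the coordinate sums of the weights of $\pi_{\bw}$. Using the concrete realization $\pi_{\bw} \subset \eta^{\otimes |w_d|} \otimes \rho^{\otimes n}$ recalled in the excerpt (valid under the standard normalization $w_d \le 0$), any standard basis tensor of $(\CC^d)^{\otimes n}$ is a $\TT^d$-eigenvector with weight of the form $-|w_d|(1,\dots,1) + (k_1,\dots,k_d)$, where $k_i \ge 0$ and $\sum_i k_i = n$. The coordinate sum of such a weight is $n - d|w_d|$, and using $n = \sum_i(w_i - w_d)$ together with $|w_d| = -w_d$ this simplifies to $\sum_i w_i$. Hence every weight of the ambient representation, and \emph{a fortiori} every weight of its subrepresentation $V_{\bw}$, has the common coordinate sum $\sum_i w_i$; if the zero weight appears in $V_{\bw}$, its coordinate sum $0$ must agree, forcing $\sum_i w_i = 0$.

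Equivalently, one may complete the character computation sketched in the excerpt: expanding $(\sum_i \lambda_i)^n$ by the multinomial theorem, each term $\binom{n}{\bm k}\prod_i \lambda_i^{k_i - |w_d|}$ integrates to zero over $\TT^d$ unless $k_i = |w_d|$ for every $i$, which requires $n = d|w_d|$ and hence $\sum_i w_i = 0$. One subtle point is that the character identity in the excerpt in fact computes the trace of $\TT^d$-averaging on the ambient tensor product $\eta^{\otimes |w_d|} \otimes \rho^{\otimes n}$ rather than on the irrep $V_{\bw}$ (whose character would be the corresponding Schur polynomial); but since the $V_{\bw}$-fixed subspace embeds into the ambient fixed subspace, vanishing of the latter forces $\bm\Pi_{\TT^d} = \bm 0$. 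The edge case $w_d > 0$ is vacuous, as then $\pi_{\bw} \subset \rho^{\otimes \sum_i w_i}$ and every weight has strictly positive coordinate sum. No substantial obstacle arises; the only care needed is the brief sign bookkeeping on $w_d$ to reconcile the realization used in the excerpt with the weight-sum computation.
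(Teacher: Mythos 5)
Your proof is correct and ultimately rests on the same mechanism as the paper's: the $\TT^d$-fixed subspace is the zero-weight space, and the weights of $\eta^{\otimes|w_d|}\otimes\rho^{\otimes n}$ all have the fixed coordinate sum $n - d|w_d| = \sum_i w_i$, so the zero weight can appear only when $\sum_i w_i = 0$. Your primary argument (tracking the coordinate sum of an arbitrary weight) is slightly more structural than the paper's explicit trace-and-multinomial computation, but they boil down to the same thing. Worth noting: you correctly flag that the paper's line
\[
\tr\bigl(\pi(\bD_{\bm\lambda})\bigr) = \Bigl(\sum_{i=1}^d\lambda_i\Bigr)^{\!n}\prod_{i=1}^d\lambda_i^{-|w_d|}
\]
is really the character of the ambient representation $\eta^{\otimes|w_d|}\otimes\rho^{\otimes n}$, not of the irrep $\pi_{\bw}$ (whose character is the Schur-type ratio of Lemma~\ref{lem:Ud-irrep-character}). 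The paper's conclusion survives because $V_{\bw}$ embeds into the ambient space and the fixed subspace can only shrink, but your version makes that reduction explicit and is the more careful write-up. Your handling of the edge case $w_d > 0$ is also fine.
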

\noindent
We detail in Appendix~\ref{app:rep-Ud} how, equipped with a description of a basis of $V_{\bm w}$ in terms of Young tableaux, we may also identify a basis of $\bm \Pi_{\TT^d}(V_{\bm w})$ described in a similar way.

Referring for a moment back to our original task, we see that in fact we only need to consider the representations $\rho^{\otimes r} \otimes (\rho^*)^{\otimes r}$, i.e., the case $r = r^{\prime}$.
For the case of studying $B_{\leq 6}(6)$, we then be interested in $0 \leq r \leq 3$.
Moreover, among the $\pi_{\bw}$ occurring in the decompositions of these representations, we will only be interested in those with $\sum w_i = 0$ and $d\lvert w_d \rvert = r$.

Let us comment briefly on the role that subspaces like $\bm\Pi_{\TT^d}(V_{\bm w})$ play in the representation theory of Lie groups more generally. 
$\TT^d$ is a \emph{torus subgroup} of $U(d)$, so called because it is, as a group, isomorphic to a product of unit circles in $\CC$, which is, as a topological space, homeomorphic to a torus. 
Those torus subgroups that are \emph{maximal}, as $\TT^d < U(d)$ is, play a special role in the classification of representations of Lie groups. 
In this classification, one argues as follows: for $\pi$ irreducible as a representation of $U(d)$, $\pi$ is also a representation of $\TT^d$, though not necessarily an irreducible one.
Thus one may consider the decomposition of $\pi$ into irreducible subrepresentations of $\TT^d$, which is straightforward since $\TT^d$ is abelian: all such subrepresentations are one-dimensional, and $\TT^d$ acts with $\bm\lambda$ acting as $\prod_{i = 1}^d \lambda_i^{a_i}$, for some tuple $\bm a = (a_1, \dots, a_d) \in \ZZ^d$.
So, to each $\pi$ is associated a collection of \emph{weights} $\bm a$ and associated subspaces on which $\TT^d$ acts with those weights.
It turns out that the $\bw$ associated to $\pi$ is, in a certain ordering, the \emph{highest weight} appearing in $\pi$, and this highest weight suffices to determine $\pi$ completely.
On the other hand, $\widetilde{V}_{\pi}$ is the subspace associated to the weight $\ba = \bm 0$, and for this reason is called the \emph{zero-weight subspace} of $V_{\pi}$.
See Appendix~\ref{app:irreps} for further technical details.
Much the same story applies to all compact Lie groups with respect to a maximal torus subgroup; see \cite{fulton2013representation} for a more general presentation (note that such presentations, however, usually treat $SU(d)$ instead of $U(d)$, as the latter is not semisimple once $d \geq 2$).

\subsubsection{Subspace invariant under \texorpdfstring{$S_d$}{S\_d}}\label{inv perm}

We are next interested in the further subspace of the zero-weight subspace that is fixed by the action of $S_d$.
Here again it is useful to recognize the more general context in Lie theory.
Here it is more convenient to work over $SU(d) < U(d)$.
The above discussion applies just as well to $SU(d)$, only with maximal torus isomorphic to $\TT^{d - 1}$ due to the determinant constraint (concretely, one may view the torus as diagonal matrices with the last diagonal entry determined by the first $d - 1$).
The group $S_d < SU(d)$ is the \emph{Weyl group} of $SU(d)$, and is formed as $N(\TT^{d - 1}) / \TT^{d - 1}$, where $N(\TT^{d - 1}) = \{\bU \in SU(d): \bU\bD\bU^{-1} \in \TT^{d - 1} \text{ for all } \bD \in \TT^{d - 1}\}$ is the \emph{normalizer} of $\TT^{d - 1}$ in $SU(d)$.

From this definition, we see that it is a general phenomenon that the Weyl group has a well-defined action on the zero-weight subspace of any irrep, i.e., the zero-weight subspace gives a representation of the Weyl group.
The question of characterizing this representation for various Lie groups has received some attention in the literature \cite{Gutkin-1973-RepresentationsWeylGroup,kostant1976macdonald,gay1976characters,reeder1998zero}, and we will draw on one of these results in our calculations.
In particular, the following result gives a recipe for computing the character of this representation of the Weyl group, for the special case of $SU(d)$ with Weyl group $S_d$.
We will use this result solely for symbolic computations, so the reader need not understand the details of the statement---all that matters is that it is, in principle, possible to compute the character of the representation we have described above.
\begin{Prop}[Theorem 2 of \cite{gay1976characters}]
    \label{weight zero character}
    Write $s \colonequals \lvert w_d \rvert$.
    Let $n = sd$ as above, and let $H \colonequals S_s \times \cdots S_s < S_n$.
    This may be viewed as the stabilizer of the element
    \[ \underbrace{e_1 \otimes \cdots \otimes e_1}_{s \text{ times}} \otimes \cdots \otimes \underbrace{e_d \otimes \cdots \otimes e_d}_{s \text{ times}} \]
    when $S_n$ acts on $(\CC^d)^{\otimes n}$ by permuting the axes.
    Let $N(H) < S_n$ be the normalizer of $H$, so that $N(H) / H$ is isomorphic to $S_d$.
    For each character $\psi$ of $S_d$, let $\widehat{\psi}$ be the character of $N(H)$ that is formed by extending $\psi$ to be constant on cosets under the above quotient.
    Then, let $\widehat{\psi}^{S_n}$ be the induced character on $S_n$.
    Suppose that there are coefficients $c_{\phi, \psi}$ for each $\phi$ a character of $S_n$ such that this character admits the expansion
    \[ \widehat{\psi}^{S_n} = \sum_{\phi \text{ a character of } S_n} c_{\phi, \psi} \phi. \]
    Let $\chi_0$ be the character of $S_d$ acting on $\Pi_{\mathbb{T}^d}(V_{\bm{w}})$ by permuting the indices of the standard basis vectors.
    Let $\phi_0$ be the character of the representation of $S_n$ indexed by the partition $(w_1 - w_d, \dots, w_{d - 1} - w_d)$.
    Then, $\chi_0$ admits the expansion
    \[ \chi_0 = \sum_{\psi \text{ a character of } S_d} c_{\phi_0, \psi} \psi. \]
\end{Prop}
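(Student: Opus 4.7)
The plan is to deduce the character identity by viewing the joint weight-$(s,\ldots,s)$ subspace
\[ Z \colonequals \mathrm{span}\bigl\{\, e_{\pi(1)} \otimes \cdots \otimes e_{\pi(n)} : \pi : [n] \to [d],\, \lvert \pi^{-1}(i) \rvert = s \text{ for each } i \,\bigr\} \subset (\CC^d)^{\otimes n} \]
as an $S_d \times S_n$-module in two different ways and reading off the multiplicity of the external tensor product $\psi \boxtimes \phi_0$ in each. Here $S_d < U(d)$ acts by permutation matrices $\bm P_\tau$ on each tensor factor and $S_n$ acts by permuting tensor factors; the two actions commute. For concreteness I work with the Schur--Weyl realization $V_{\bm f} \subset \rho^{\otimes n}$, in which $\bm f = \bm w + s\,\mathbf 1$ is a partition of $n = sd$ with at most $d$ parts.

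First, Schur--Weyl duality gives
\[ (\CC^d)^{\otimes n} \;\cong\; \bigoplus_{\lambda \vdash n,\,\ell(\lambda)\leq d} V_\lambda \otimes S_\lambda \]
as a $U(d) \times S_n$-bimodule. Restricting to the $(s,\ldots,s)$-weight subspace yields
\[ Z \;\cong\; \bigoplus_\lambda \Pi_{\TT^d}(V_\lambda) \otimes S_\lambda \]
as an $S_d \times S_n$-module. Consequently, for each irreducible $S_d$-character $\psi$, the multiplicity of $\psi \boxtimes \phi_0$ in $Z$ is exactly the coefficient of $\psi$ in $\chi_0$, once $\lambda$ is taken to be $\bm f$.

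Second, $Z$ is a transitive permutation module for $S_d \times S_n$: the action on the basis $\{v_\pi\}$ reads $(\tau, g) \cdot v_\pi = v_{\tau \circ \pi \circ g^{-1}}$. A direct calculation shows that the stabiliser of the canonical element $\pi_0(k) = \lceil k/s \rceil$ (i.e.\ of $v_0 = e_1^{\otimes s} \otimes \cdots \otimes e_d^{\otimes s}$) is the \emph{twisted diagonal}
\[ \widetilde N \colonequals \bigl\{(\tau_g, g) \in S_d \times S_n : g \in N(H)\bigr\}, \]
where $\tau_g \in S_d$ is the image of $g$ under $N(H) \twoheadrightarrow N(H)/H \cong S_d$: the equation $\tau \circ \pi_0 = \pi_0 \circ g$ forces $g$ to send block $j$ to block $\tau(j)$ for every $j$. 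Hence $Z \cong \mathrm{Ind}_{\widetilde N}^{S_d \times S_n} \mathbf 1$, and two applications of Frobenius reciprocity give
\[ \mathrm{mult}(\psi \boxtimes \phi_0,\, Z) \;=\; \bigl\langle \widehat\psi,\, \phi_0|_{N(H)} \bigr\rangle_{N(H)} \;=\; \bigl\langle \widehat\psi^{S_n},\, \phi_0 \bigr\rangle_{S_n} \;=\; c_{\phi_0, \psi}, \]
where the first equality rewrites the character of $(\psi \boxtimes \phi_0)|_{\widetilde N}$ as $\widehat\psi \cdot \phi_0|_{N(H)}$ under the isomorphism $\widetilde N \cong N(H)$, and uses the real-valuedness of $S_n$-characters. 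Equating the two expressions for $\mathrm{mult}(\psi \boxtimes \phi_0, Z)$ yields $\chi_0 = \sum_\psi c_{\phi_0, \psi}\, \psi$.

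The main obstacle is identifying the stabiliser of $\pi_0$ as precisely the twisted diagonal $\widetilde N$ --- that is, the \emph{graph} of the quotient $N(H) \twoheadrightarrow S_d$ rather than any direct product. This ``seesaw'' is exactly what transmits the $S_d$-action arising on the $U(d)$ side of Schur--Weyl to the restricted Specht character $\phi_0|_{N(H)}$ on the $S_n$ side, and is the one place where a careful orbit-stabiliser calculation is essential. A secondary technicality is that the stated realization $V_{\bm w} \subset \eta^{\otimes s} \otimes \rho^{\otimes n}$ differs from the Schur--Weyl $V_{\bm f}$ by a $\det^{-s}$ twist, which on permutation matrices contributes a factor of $\sgn^s$; since a block-permutation lift of $\tau \in N(H)/H$ already has $S_n$-sign $\sgn(\tau)^s$, this sign is absorbed by a consistent choice of coset representatives in defining $\widehat\psi$, making the formula hold as stated.
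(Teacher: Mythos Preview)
Your argument is correct and is essentially the standard proof of this result; the paper itself does not give a proof but simply cites Gay~\cite{gay1976characters}. The two-way decomposition of the weight-$(s,\ldots,s)$ subspace $Z$ as an $S_d \times S_n$-bimodule---once via Schur--Weyl duality, once as the transitive permutation module $\mathrm{Ind}_{\widetilde N}^{S_d \times S_n}\mathbf{1}$---together with Frobenius reciprocity is exactly the right mechanism, and your identification of the stabilizer $\widetilde N$ as the graph of $N(H)\twoheadrightarrow S_d$ is the crux and is handled correctly.

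One small point: your final paragraph about the $\det^{-s}$ twist is unnecessary and slightly off. The proposition defines $\chi_0$ as the character of $S_d$ acting on $\Pi_{\TT^d}(V_{\bm w})$ \emph{by permuting the indices of the standard basis vectors}, i.e., via $\bm P_\sigma^{\otimes n}$ with no determinant factor. This is precisely the $S_d$-action you use in the Schur--Weyl picture, so no sign reconciliation is needed at this stage. The sign discrepancy between this action and the genuine $U(d)$-restriction $\pi_{\bm w}(\bm P_\sigma) = \sgn(\sigma)^s \bm P_\sigma^{\otimes n}$ is exactly what the paper addresses \emph{after} the proposition, in Corollary~\ref{invariant subspace dimension}. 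Also, $\widehat\psi$ is the inflation of $\psi$ along the quotient $N(H)\to N(H)/H$ and does not depend on any choice of coset representatives, so the phrase ``absorbed by a consistent choice of coset representatives'' is not the right explanation. Dropping that paragraph would only improve the write-up.
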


We emphasize one important detail, which stems from \cite{gay1976characters} working over $SU(d)$ while we work over $U(d)$.
The above describes the character of $S_d$ acting on $\bm \Pi_{\TT^d}(V_{\bm w})$ where $\sigma$ acts as $\bm P_{\sigma}^{\otimes n}$.
This is \emph{not} the action associated to the representation $V_{\bm w}$.
That action involves an extra determinant term, with $\sigma$ acting as $\pi(\bm P_{\sigma}) = \det(\bm P_{\sigma})^{\lvert w_d \rvert} \bm P_{\sigma}^{\otimes n} = \sgn(\sigma)^{\lvert w_d \rvert}\bm P_{\sigma}^{\otimes n}$.
Taking into account this adjustment, we finally obtain the following means of computing $\dim(\widetilde{V}_{\bw})$.

\begin{Cor}
    \label{invariant subspace dimension}
    Let $\chi$ be the character of the trivial representation of $S_d$ if $\lvert w_d \rvert$ is even, and the character of the sign representation if $\lvert w_d \rvert$ is odd.
    Then, in the setting of Proposition~\ref{weight zero character}, $\dim(\widetilde{V}_{\bm w}) = \langle \chi_0, \chi \rangle = c_{\phi_0, \chi}$.
\end{Cor}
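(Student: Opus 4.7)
The plan is to derive the claim as a short character-theoretic corollary of Proposition~\ref{weight zero character}, paying careful attention to the determinant twist that distinguishes the $\pi$-action of $S_d$ on $V_{\bm w}$ from the naive permutation action on $(\CC^d)^{\otimes n}$.

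First, I would observe that since $S_d$ normalizes $\TT^d$ inside $U(d)$, the projection $\bm\Pi_{\TT^d}$ and the operators $\pi(\bm P_\sigma)$ commute, so $\bm\Pi_{\TT^d}(V_{\bm w})$ is an $S_d$-subrepresentation of $V_{\bm w}$ and the commuting pair of projections $\bm\Pi_{\TT^d}$ and $\bm\Pi_{S_d}$ cuts out $\widetilde{V}_{\bm w}$ as the $S_d$-invariants of this subspace. Therefore $\dim(\widetilde{V}_{\bm w}) = \langle \chi_{\mathrm{rep}}, \mathbf 1 \rangle_{S_d}$, where $\chi_{\mathrm{rep}}$ is the character of the $S_d$-action on $\bm\Pi_{\TT^d}(V_{\bm w})$ coming from $\pi$.

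Next I would relate $\chi_{\mathrm{rep}}$ to the character $\chi_0$ of Proposition~\ref{weight zero character}. By construction, $\pi(\bm P_\sigma)$ acts as $\det(\bm P_\sigma)^{|w_d|}\bm P_\sigma^{\otimes n} = \sgn(\sigma)^{|w_d|}\bm P_\sigma^{\otimes n}$, while $\chi_0$ records only the bare permutation action $\sigma \mapsto \bm P_\sigma^{\otimes n}$ on the same subspace. Hence $\chi_{\mathrm{rep}}(\sigma) = \sgn(\sigma)^{|w_d|}\chi_0(\sigma)$. Using that characters of $S_d$ are real and that $\sgn^2 = \mathbf 1$, I get
\[
\dim(\widetilde{V}_{\bm w}) \;=\; \langle \sgn^{|w_d|}\chi_0,\, \mathbf 1\rangle_{S_d} \;=\; \langle \chi_0,\, \sgn^{|w_d|}\rangle_{S_d} \;=\; \langle \chi_0,\, \chi\rangle_{S_d},
\]
which gives the first claimed equality with $\chi = \mathbf 1$ or $\chi = \sgn$ depending on the parity of $|w_d|$.

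Finally, Proposition~\ref{weight zero character} expands $\chi_0 = \sum_\psi c_{\phi_0,\psi}\,\psi$ over the irreducible characters $\psi$ of $S_d$, and by orthogonality of these characters $\langle \chi_0,\chi\rangle_{S_d} = c_{\phi_0,\chi}$, giving the second equality. The only real substantive content of the corollary is bookkeeping of the $\sgn^{|w_d|}$ twist, so I do not anticipate any genuine obstacle; the argument is a direct translation from $SU(d)$-conventions (in which \cite{gay1976characters} is phrased) to the $U(d)$-conventions used in the paper.
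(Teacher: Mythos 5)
Your proposal is correct and matches the paper's own (informal) reasoning precisely: the paper establishes the corollary through the discussion immediately preceding it, which is exactly the observation that the $S_d$-action on $V_{\bm w}$ carries the extra $\sgn(\sigma)^{|w_d|}$ twist relative to the bare permutation action tracked by $\chi_0$, combined with $\dim(\text{invariants}) = \langle \text{character}, \mathbf{1}\rangle$ and orthogonality. You have simply written out the bookkeeping the paper leaves implicit.
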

\noindent
With this fact in hand,  $\dim(\widetilde{V}_{\bm w})$ may be computed using standard symbolic algebra tools; we have used the \texttt{SageMath} package for this purpose.
What will be convenient is that often $\dim(\widetilde{V}_{\bm w}) = 0$, and for small $\|\bm w \|_1$ when the dimension is non-zero then it is 1.
We list all indices $\bm w$ where $\lvert \bw \rvert \leq 10$ and where $\dim(\widetilde{V}_{\bm w}) > 0$ in Table~\ref{tab:small-w-dimensions}; we will only look at $\lvert \bm w \rvert \leq 6$ in the sequel, but include further results for the sake of completeness.

\begin{remark}
    It seems plausible based on the data presented in Table~\ref{tab:small-w-dimensions} to conjecture that $\dim(\widetilde{V}_{\bm w}) \neq 0$ only when $\bm w$ has at most two non-zero positive entries and two non-zero negative entries.
    It is an interesting open problem to derive such a constraint from Corollary~\ref{invariant subspace dimension}.
\end{remark}

\begin{table}[t]
    \centering
    \begin{tabular}{c|rrrrrr|c}
        $r = \lvert \bm w \rvert / 2$ & \multicolumn{6}{c|}{$\bm w$} & $\dim(\widetilde{V}_{\bm w})$ \\
        \hline
        0 & 0 & 0 & 0 & 0 & 0 & 0 & 1 \\
                \hline
        2 & 2 & 0 & 0 & 0 & 0 & $-2$ & 1 \\
        \hline
        3 & 3 & 0 & 0 & 0 & 0 & $-3$ & 1 \\
        \hline
        \multirow{5}{*}{4} & 2 & 2 & 0 & 0 & $-2$ & $-2$ & 1 \\
        & 2 & 2 & 0 & 0 & 0 & $-4$ & 1 \\
        & 3 & 1 & 0 & 0 & $-1$ & $-3$ & 1 \\
        & 4 & 0 & 0 & 0 & $-2$ & $-2$ & 1 \\
        & 4 & 0 & 0 & 0 & 0 & $-4$ & 2 \\
        \hline
        \multirow{9}{*}{5} & 3 & 2 & 0 & 0 & $-2$ & $-3$ & 1 \\
        & 3 & 2 & 0 & 0 & $-1$ & $-4$ & 1 \\
        & 3 & 2 & 0 & 0 & 0 & $-5$ & 1 \\
        & 4 & 1 & 0 & 0 & $-2$ & $-3$ & 1 \\
        & 4 & 1 & 0 & 0 & $-1$ & $-4$ & 1 \\
        & 4 & 1 & 0 & 0 & 0 & $-5$ & 1 \\
        & 5 & 0 & 0 & 0 & $-2$ & $-3$ & 1 \\
        & 5 & 0 & 0 & 0 & $-1$ & $-4$ & 1 \\
        & 5 & 0 & 0 & 0 & 0 & $-5$ & 2 \\
        \hline
    \end{tabular}
    
    \vspace{1em}
    
    \caption{All cases of $\bm w \in \ZZ^6_{\downarrow}$ where $0 \leq \lvert \bm w \rvert \leq 10$ and where $\dim(\widetilde{V}_{\bm w}) > 0$. We note that there are no such cases with $\lvert \bm w \rvert = 2$.}
    \label{tab:small-w-dimensions}
\end{table}

\subsection{Final computer verification}
\label{sec:final-verification}

Finally, we describe the last part of our proof, which is essentially a brute-force computer-assisted treatment of the few cases where $\dim(\widetilde{V}_{\bw}) = 1$ for $\lvert \bw \rvert \leq 6$.
In particular, per the results presented in Table~\ref{tab:small-w-dimensions}, it suffices to consider $\bw = (k, 0, 0, 0, 0, -k)$ for $k \in \{2, 3\}$.
The code for performing these verifications, which is implemented in the \texttt{SageMath} system, is available in the online supplementary materials \cite{supplementary}.
Here we give a brief overview of how these calculations are implemented; further details of optimizations made in the code are also discussed in Appendix~\ref{app:computational}.

We would like to show that
\[ -\frac{1}{6}\bm I \preceq \widehat{\mu_{[\bm H]}}(\pi) = \bm \Pi_{\TT^d \rtimes S_d} \pi(\bm H) \bm \Pi_{\TT^d \rtimes S_d}. \]
The preceding discussion has shown that, for the two cases we are interested in, $\dim(\widetilde{V}_{\bw}) = 1$, so it suffices to produce some $\bm 0 \neq \bv \in \widetilde{V}_{\bw}$ (all such vectors will be scalar multiples of one another) and verify that
\begin{equation}
    \label{eq:H-ineq}
    -\frac{1}{6} \|\bv\|^2 \leq \bv^{\top} \pi(\bm H) \bv.
\end{equation}
We describe how to find such non-zero $\bv$ in Appendix~\ref{app:tV-nonzero} using the combinatorial description of a basis of $\bm \Pi_{\TT^d}(V_{\bm w})$ and the formula for $\bm \Pi_{S_d}$.
It is important that this $\bv$ is very sparse, since it belongs to $(\CC^d)^{\otimes n}$ for $n = d\lvert w_d \rvert$, which can have dimension as large as $6^{6 \cdot 3} > 10^{14}$ for $\bw = (-3, 0, 0, 0, 0, 3)$.
As shown there, $\bv$ will lie in the subspace spanned by the 
\[
    \bm x_{\bm i} \colonequals \sum_{\sigma \in S_d}\sgn(\sigma)^{\lvert w_d \rvert} \bm e_{\sigma(\bm{i})} = \sum_{\sigma \in S_d}\sgn(\sigma)^{\lvert w_d \rvert}  \bm e_{\sigma(i_1)} \otimes \bm e_{\sigma(i_2)} \otimes \dots \otimes \bm e_{\sigma(i_{d\lvert w_d \rvert})},
\]
for $\bm i \in [d]^{n}$ where every index appears exactly $\lvert w_d \rvert$ times in $\bm i$, and will have all entries belonging to $\{-1, 0, 1\}$.
We then compute $\|\bv\|^2$ directly by counting the non-zero entries in $\bv$.

In this section we describe the structure of $\pi(\bm H)$ sufficiently to specify the rest of this computation.
As we view $\pi$ as acting on a subspace of $(\CC^d)^{\otimes n}$, we may identify
\[ \pi(\bH) = \det(\bH)^{-\lvert w_d \rvert} \bH^{\otimes n}. \]
The determinant term is not trivial to compute, but fortunately was treated in previous literature.
\begin{Prop}[Table 1 of \cite{dickinson1982eigenvectors}]
    Suppose $d = 4q + r$ for some $r \in \{0, 1, 2, 3\}$. Then,
    \begin{equation}
        \det(\bm F_d) = \left\{\begin{array}{ll} 
        (-1)^q i & \text{if } r = 0, \\
        (-1)^q & \text{if } r = 1, \\
        (-1)^{q + 1} & \text{if } r = 2, \\
        (-1)^{q + 1}i & \text{if } r = 3.
        \end{array}\right.
    \end{equation}
\end{Prop}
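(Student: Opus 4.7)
The plan is to invoke the classical Vandermonde determinant formula. Since $(\sqrt{d}\,\bm F_d)_{jk} = \omega^{jk}$ for $\omega = e^{2\pi i/d}$, the matrix $\sqrt{d}\,\bm F_d$ is a Vandermonde matrix in the $d$-th roots of unity, so
\[
    \det(\sqrt{d}\,\bm F_d) = \prod_{0 \leq j < k \leq d-1}(\omega^k - \omega^j).
\]
The strategy is to evaluate the phase of this product directly, since unitarity of $\bm F_d$ already fixes the modulus to be $d^{d/2}$.

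First I would factor each term as $\omega^k - \omega^j = e^{i\pi(j+k)/d} \cdot 2i\sin(\pi(k-j)/d)$, observing that every sine is strictly positive for $0 < k - j < d$. Collecting the exponential prefactors requires the identity
\[
    \sum_{0 \leq j < k \leq d-1}(j+k) = \frac{d(d-1)^2}{2},
\]
which is routine to check by splitting into $\sum_{j} j(d-1-j) + \sum_{k} k^2$. Consolidating the powers of $i$ and the sine product, the phase and modulus separate cleanly, leaving a positive real factor $2^{\binom{d}{2}}\prod_{j<k}\sin(\pi(k-j)/d)$ that must equal $d^{d/2}$ by unitarity. Dividing through by $d^{d/2}$ yields the compact identity
\[
    \det(\bm F_d) = i^{(d-1)^2 + \binom{d}{2}}.
\]

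The remaining step is to reduce the exponent modulo $4$ according to $d = 4q + r$ with $r \in \{0,1,2,3\}$. A direct case check gives $(d-1)^2 + \binom{d}{2} \equiv 2q+1,\ 2q,\ 2q+2,\ 2q+3 \pmod{4}$ in the four respective cases, matching the claimed values $(-1)^q i$, $(-1)^q$, $(-1)^{q+1}$, $(-1)^{q+1}i$ exactly. The only non-mechanical ingredient in the entire argument is the identification of the sum $S = d(d-1)^2/2$; once this is in hand, everything else is algebra and a four-case arithmetic check, so I do not expect a serious obstacle. An alternative route would start from $\bm F_d^4 = \bm I$ (so $\det(\bm F_d) \in \{1, i, -1, -i\}$) and pin down the correct fourth root of unity via a Gauss-sum computation of $\operatorname{tr}(\bm F_d)$, but the Vandermonde approach above seems both cleaner and more self-contained.
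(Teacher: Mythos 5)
Your proof is correct, and the steps all check out. The paper itself provides no proof for this proposition — it simply cites Table~1 of \cite{dickinson1982eigenvectors} — so you have supplied a self-contained derivation where the paper supplies none.

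A few remarks on your argument. The Vandermonde factorization $\det(\sqrt{d}\,\bm F_d)=\prod_{j<k}(\omega^k-\omega^j)$, the splitting $\omega^k-\omega^j = e^{i\pi(j+k)/d}\cdot 2i\sin(\pi(k-j)/d)$, the sum $\sum_{j<k}(j+k)=\tfrac{d(d-1)^2}{2}$ (which gives $e^{i\pi(d-1)^2/2}=i^{(d-1)^2}$ after dividing by $d$), and the four-case reduction $(d-1)^2+\binom{d}{2} \equiv 2q+1,\ 2q,\ 2q+2,\ 2q+3 \pmod 4$ for $r=0,1,2,3$ all verify. Your use of unitarity to identify the positive real factor $2^{\binom{d}{2}}\prod_{j<k}\sin(\pi(k-j)/d)$ with $d^{d/2}$ is a genuine shortcut: it avoids proving that sine-product identity directly, which is the only part of the Vandermonde route that would otherwise require work. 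The alternative you mention — pinning down the fourth root of unity via $\tr(\bm F_d)$ and a quadratic Gauss sum — is essentially the route of Dickinson and Steiglitz and of Gauss himself, and is arguably the more classical one; your Vandermonde-plus-unitarity argument is, as you say, more self-contained since it sidesteps the Gauss sum evaluation.
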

In particular, in our case of $d = 6$ we have $q = 1$ and $r = 2$, so $\det(\bm F_6) = 1$ and we may discard this term.
Writing $s \colonequals \lvert w_d \rvert$, the right-hand side of \eqref{eq:H-ineq} will then be a linear combination of quantities of the form
\begin{align*}
 f(\bm i, \bm j) 
 &\colonequals \bx_{\bm i}^{\top} \pi(\bm H) \bx_{\bm j} \\
 &= \det(\bm F_d)^{-s}\frac{1}{(d!)^2d^{\frac{ds}{2}}} \sum_{\sigma, \sigma' \in S_d}  (\sgn(\sigma  \sigma' ) )^{s} \exp\left(\frac{2\pi i}{d} \langle \sigma(\bm{i}),\sigma'(\bm{j}) \rangle\right).
 \end{align*}

One can see that some of the information contained in a pair of indices $\bm{i},\bm{j}$ is redundant, and, in order to compute $f(\bm i, \bm j)$, we only need to keep track of how often various indices occur in the same position in $\bm{i}$ and $\bm{j}$. For this we introduce the formalism of the \emph{$\bm{S}$-matrix}.

\begin{Def}[$\bm{S}$-matrix]
    Given a pair of indices $\bm{i,j}$, we define $\bm{S^{i,j}} \in (\mathbb{Z} / d\ZZ)^{d \times d} $
    by 
    \[
        S_{m,n}^{\bm i, \bm j} = \lvert \{t : i_t = m \text{ and } j_t = n  \} \rvert \mod d.
    \]
    This matrix satisfies $\sum_k S_{k,\ell}^{\bm i, \bm j} = \sum_{\ell} S_{k,\ell}^{\bm i, \bm j} = s \mod d$ for every $j$, which is sometimes called the property of being a \emph{magic square} (modulo $d$).
\end{Def}
\noindent
Equivalently, such $\bS$ is the adjacency matrix of an $s$-regular bipartite graph with $d$ nodes on either side of the bipartition and repeated edges permitted, where furthermore the counts of repeated edges may be interpreted modulo $d$.

These matrices determine the function we are interested in as follows.
\begin{Lemma}
    $f(\bm i, \bm j)$ is determined by the corresponding $\bm S = \bm{S^{i,j}}$ matrix, in particular 
    \[ f(\bm i, \bm j) = f(\bm S) = \frac{\det ( \bm F_d)^{-s}}{ (d!)^2 d^{\frac{ds}{2}}} \sum_{\sigma,\sigma' \in S_d} \sgn(\sigma \sigma')^s \exp\left(\frac{2 \pi i}{d}\sigma(\bm c)^\top \bm{S} \sigma^{\prime}(\bm c)\right) \]
    for $\bm c = (1,2 \dots, d)^\top$.
\end{Lemma}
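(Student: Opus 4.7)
The proof is essentially a direct computation, reorganizing the sum defining $f(\bm i, \bm j)$ so that the dependence on $\bm i, \bm j$ goes through the counts of joint occurrences rather than through the indices themselves.

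The plan is as follows. First, I would expand the inner product in the exponent:
\[
\langle \sigma(\bm i), \sigma'(\bm j)\rangle = \sum_{t=1}^{n} \sigma(i_t)\,\sigma'(j_t),
\]
using the convention that $\sigma$ acts componentwise on index tuples. Then I would group the summation over $t$ according to the pair $(i_t, j_t)$: for each $(m,n) \in [d]^2$, the number of indices $t$ with $i_t = m$ and $j_t = n$ is, by definition, a representative of $S^{\bm i, \bm j}_{m,n} \in \ZZ/d\ZZ$. This gives
\[
\langle \sigma(\bm i), \sigma'(\bm j)\rangle = \sum_{m,n=1}^d \sigma(m)\sigma'(n) \cdot \bigl|\{t : i_t = m,\, j_t = n\}\bigr|.
\]

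Second, I would recognize the right-hand side as a bilinear form: with $\bm c = (1,2,\dots,d)^\top$, we have $\sigma(\bm c)_m = \sigma(m)$ and $\sigma'(\bm c)_n = \sigma'(n)$, so the sum equals $\sigma(\bm c)^\top \widetilde{\bm S} \sigma'(\bm c)$ where $\widetilde{S}_{m,n} = \bigl|\{t : i_t = m, j_t = n\}\bigr|$ is an integer representative of $S^{\bm i, \bm j}_{m,n}$.

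Third, since the inner product only enters through $\exp(\tfrac{2\pi i}{d}\,\cdot\,)$, which has period $d$, replacing $\widetilde{\bm S}$ by any other integer lift of $\bm S^{\bm i, \bm j} \in (\ZZ/d\ZZ)^{d \times d}$ does not change the value of the exponential. (Here it matters that $\sigma(m)$ and $\sigma'(n)$ are integers, so congruence mod $d$ in the matrix entries is preserved in the bilinear form mod $d$.) Substituting back into the definition of $f(\bm i, \bm j)$ yields the claimed formula and shows that $f(\bm i, \bm j)$ depends on $(\bm i, \bm j)$ only through $\bm S^{\bm i, \bm j}$.

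There is no real obstacle here; the statement is a bookkeeping lemma that rewrites the double coset sum in terms of a compact combinatorial invariant. The only point requiring a bit of care is the convention for the action of $\sigma$ on $\bm c$ (so that indices match up correctly in the bilinear form) and the observation that the reduction modulo $d$ is legitimate precisely because the outer function is a root-of-unity character.
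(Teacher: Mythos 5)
Your proof is correct and takes the only natural approach: expand $\langle \sigma(\bm i), \sigma'(\bm j)\rangle = \sum_t \sigma(i_t)\sigma'(j_t)$, group the terms by the pair $(i_t, j_t) = (m, n)$ to obtain $\sum_{m,n}\sigma(m)\sigma'(n)\,\widetilde{S}_{m,n} = \sigma(\bm c)^\top \widetilde{\bm S}\sigma'(\bm c)$, and then note that the factor $\exp(\tfrac{2\pi i}{d}\,\cdot\,)$ has period $d$, so the integer lift $\widetilde{\bm S}$ may be reduced modulo $d$ to $\bm S^{\bm i, \bm j}$. The paper treats the lemma as immediate from the definitions and does not spell out a separate proof, but the computation you describe is exactly what is implicit there.
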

When $\bm S$ is interpreted as the adjacency matrix of an $s$-regular bipartite multigraph with $d$ vertices on each side of the bipartition, then the computation above may be viewed as indexed by all labellings of either side with the numbers $1, \dots, d$, and where the inner product in the exponential is the sum of products of pairs of numbers on opposite sides of each edge.

We complete our symbolic verification by computing $f(\bm S)$ for \emph{all} $\bm S$ with row and column sums equal to 2 and 3 as a preprocessing step, and then computing the requisite summation of those quantities given by the Young symmetrizer.
We provide this intermediate result together with the code in \cite{supplementary}; indeed, understanding what governs the behavior of $f(\bm S)$ is likely to be one of the main technical obstacles to a more conceptual and general proof of positivity.

We remark on just one final computational shortcut: as is clear from the formulas above, $f(\bm S)$ is invariant under permutations of the rows and columns of $\bm S$, except for incurring a change in sign according to the sign of the permutation if $s$ is odd.
Thus it suffices to precompute the values of such $\bm S$ up to permutations.
By repeatedly applying Hall's marriage theorem, we find that any $\bm S$ is a sum of permutation matrices, $\bm S = \bm P_1 + \bm P_2 + \cdots + \bm P_s$.
We may then multiply $\bm S$ on the left by $\bm Q \bm P_1^{-1}$ and on the right by $\bm Q^{-1}$, obtaining
\[ \bm Q \bm P_1^{-1} \bm S \bm Q^{-1} = \bm I + \sum_{k = 2}^s \bm Q \bm P_1^{-1} \bm P_k \bm Q^{-1}. \]
Thus, it suffices to precompute $f(\bm S)$ for those cases where $\bm P_1 = \bm I$ and $\bm P_2$ is some canonical representative of its conjugacy class, which depends only on the cycle type.
For $s = 1$ this leaves only one matrix, for $s = 2$ and $d = 6$ it leaves 11 matrices (as there are 11 partitions of the number 6), and for $s = 3$ and $d = 6$ it leaves $11 \cdot 6! = 7920$ partitions.
Of course, these numbers quickly grow for increasing $s$, but for $s \leq 3$ and $d = 6$ these computations fortunately remain tractable.

Further details on how these results are substituted into the main computations of $\|\bv\|^2$ and $\bv^{\top}\pi(\bH)\bv$ are given in Appendix~\ref{app:computational}.

\begin{remark}
    We do not know, in general, how to compute $f(\bm i, \bm j)$ in closed form, and this is an intriguing open question that would likely need to be addressed to generalize our results to higher polynomial degrees.
    A more principled approach might take advantage of the representation theory of the action of $S_n$ on ``balanced partitions'' of $[n] = [\lvert w_d \rvert \cdot d]$ into $d$ parts of size $\lvert w_d \rvert$.
    Unfortunately, the decomposition of the associated representation into irreducible representations of $S_n$ appears to be unknown in general; see, e.g., Chapter 12 of \cite{GM-2016-ErdosKoRadoAlgebraic}.
    The recent paper \cite{GP-2021-MUBOptimizationSymmetry}, which explores a different approach using non-commutative sum-of-squares optimization to bound the sizes of MUBs, uses this symmetry group as well, albeit only for numerical computations.
\end{remark}

\addcontentsline{toc}{section}{References}
\bibliography{refs.bib}

\appendix

\section{Sufficiency of symmetric dual certificates}\label{sec: symmetries}

Recalling the original problem of MUBs, we notice that their definition is invariant under permutations of the basis vectors and multiplications of each basis by a complex number of unit norm.
It is therefore reasonable to expect that the upper bound obtained from \eqref{eq: mub mini} or the dual bound from \eqref{eq: mub maxi} should also be invariant under such transformations. We formalize this intuition in the following two theorems, which can be applied by taking $W = \mathbb{T}^d \rtimes S_d$ (per Definition~\ref{def: equiv had}).

\begin{Th}\label{inv function}
    Suppose that $G$ is a compact group and $W \leq G$ is a closed subgroup such that
    \[
        wA = Aw = A 
    \]
    for all $w \in W$. Then, we may add to \eqref{eq: mub mini} the constraint that $h$ invariant under $W$, i.e., that $h(wg) = h(gw) = h(g)$ for any $g \in G$ and $w \in W$, without changing the value of $B(G, A)$.
\end{Th}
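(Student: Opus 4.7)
The plan is to show that for any $h$ feasible for \eqref{eq: mub mini}, its two-sided symmetrization
\[
\tilde h(g) \colonequals \int_W \int_W h(w_1 g w_2) \, dw_1 \, dw_2
\]
(where $dw$ denotes the normalized Haar measure on $W$) is again feasible, is $W$-invariant on both sides, and satisfies $\tilde h(e) \leq h(e)$. Since imposing $W$-invariance only shrinks the feasible set, this yields equality of the two infima.

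Three of the required properties are routine. Two-sided $W$-invariance of $\tilde h$ follows from the substitutions $w_1 \mapsto w_1 u^{-1}$, $w_2 \mapsto v^{-1} w_2$ for $u, v \in W$, using invariance of Haar measure on $W$. The identity $\int \tilde h \, d\nu = \int h \, d\nu$ follows from Fubini together with two-sided translation invariance of $\nu$ on $G$. The sign constraint $\tilde h(g) \leq 0$ for $g \in A^c$ follows because $wA = Aw = A$ for all $w \in W$ forces $W \cdot A^c \cdot W = A^c$, so the integrand is always $\leq 0$.

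The main step is preserving positive definiteness, which I would handle using Theorem~\ref{th: pos def rep}: write $h(g) = \langle \pi(g) v, v \rangle$ for some unitary representation $\pi$ on a Hilbert space $\sH$ and some $v \in \sH$. Expanding gives $h(w_1 g w_2) = \langle \pi(g) \pi(w_2) v, \pi(w_1^{-1}) v \rangle$; pulling the integrals inside the sesquilinear form and using the invariance of the Haar measure on $W$ under inversion (so that $\int_W \pi(w_1^{-1}) v \, dw_1 = \int_W \pi(w_1) v \, dw_1$), I obtain
\[
\tilde h(g) = \langle \pi(g) \Pi_W v, \Pi_W v \rangle,
\]
where $\Pi_W \colonequals \int_W \pi(w) \, dw$ is the orthogonal projection onto the $W$-invariant subspace of $\sH$. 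Thus $\tilde h$ has the form prescribed by Theorem~\ref{th: pos def rep}, so it is positive definite.

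Finally, to compare objective values, the substitution $w_2 \mapsto w_1^{-1} w_2$ gives $\tilde h(e) = \int_W h(w) \, dw$, and the standard bound $|h(w)| \leq h(e)$ for positive definite $h$ (which follows from Cauchy--Schwarz applied to $\langle \pi(w) v, v \rangle$) yields $\tilde h(e) \leq h(e)$. I expect the positive-definiteness step to be the main conceptual hurdle: the naive average of a positive definite function against two independent group elements need not a priori be positive definite, and the cleanest way to see that it actually is uses the GNS-style representation of $h$ to identify a single vector $\Pi_W v$ serving both sides of the inner product. Everything else is bookkeeping with Haar measures.
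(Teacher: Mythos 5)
Your proof is correct and follows essentially the same route as the paper: both define the two-sided symmetrization $\int_W\int_W h(w_1 g w_2)\,dw_1\,dw_2$, both preserve positive definiteness by writing $h(g)=\langle\pi(g)v,v\rangle$ and collapsing the two integrals to the single vector $\Pi_W v = \int_W\pi(w)v\,dw$ via inversion-invariance of Haar measure, and both bound the objective via $h(g)\le h(e)$. Your writeup is marginally more complete in that it explicitly verifies the normalization constraint $\int\tilde h\,d\nu=\int h\,d\nu$, which the paper leaves implicit.
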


\begin{proof}
    Since $W$ is a closed subgroup and $G$ is compact, $W$ is also a compact group and so is endowed with a left- and right-invariant Haar probability measure $\nu_W$.
    We consider replacing some $h$ feasible for \eqref{eq: mub mini} by
    \[
        h_W(g) \colonequals \int_W  \int_W  h(w_1 g w_2) d\nu_W(w_1) d\nu_W(w_2).
    \]
    Clearly $h_W$ is still in $C_{\leq 0, A^c}$.
    Since $h$ is positive definite, we may write it as
    \[
        h(g) =\langle \pi(g) \bv, \bv \rangle 
    \]
    for some $\pi: G \to \sH$ a unitary representation for some Hilbert space $\sH$ and $\bv \in \sH$.
    We then we obtain 
    \begin{align*}
        h_W(g) &= \left\langle \pi(g) \int_W \pi(w) d \nu_W(w)\bv, \int_W \pi(w^{-1}) d \nu_W(w)\bv \right\rangle \\
         &= \left\langle \pi(g) \int_W \pi(w) d \nu_W(w)\bv, \int_W \pi(w) d \nu_W(w)\bv \right\rangle,
    \end{align*}
    where we used that the pushforward of $\nu_W$ under the map $w \mapsto w^{-1}$ is also a left- and right-invariant probability measure on $W$ and so must equal $\nu_W$.
    This shows that $h_W$ is also a positive definite function.
    Since $h(g) \leq h(e)$ for all $g \in G$, we have $h_W(e) \leq h(e)$ and thus $h_W(e)$ is feasible for \eqref{eq: mub mini}, has the required invariance, and has an objective value at most that of $h$.
\end{proof}

We note that the polynomial $h_0$ used in \cite{KMW-2018-PositiveDefiniteMUBs} to rederive the Welch bound (our \eqref{h0-poly}) already fulfills this invariance property.

To formulate an analogous reduction for the dual program, for $w \in G$ and $f \in C(G)$, define
\begin{align*}
    (L_wf)(g) &\colonequals f(w^{-1}g), \\
    (R_wf)(g) &\colonequals f(gw).
\end{align*}
These operators also admit adjoints acting on measures, which integrate $f \in C(G)$ through
\begin{align*}
    \int_G f(g) dL_w^* \mu &= \int_{G} f(w^{-1} g) d \mu(g), \\
    \int_G f(g) dR_w^* \mu &= \int_{G} f(gw) d\mu(g).
\end{align*}

\begin{Th}\label{lem: inv measure}
    With $W \leq G$ as in Theorem~\ref{inv function}, we may add to \eqref{eq: mub maxi} the constraint that $\mu$ is invariant under $W$, i.e., that $L_w^*\mu = R_w^*\mu = \mu$ for any $w \in W$, without changing the value of $B^*(G, A)$.
\end{Th}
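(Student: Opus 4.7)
The plan is to dualize the averaging argument of Theorem~\ref{inv function}. Fix a dual-feasible pair $(\mu,c)$ in the sense of \eqref{eq:dual-feas-cond}: a non-negative Radon measure $\mu$ supported on $\overline{A^c}$ with $\delta_e + \mu - c\nu \in \mathcal{M}_{\succeq 0}(G)$. Let $\nu_W$ be the Haar probability measure on the closed subgroup $W$, and define the $W$-symmetrization
\[
    \mu_W \colonequals \int_W \int_W L_{w_1}^* R_{w_2}^* \mu \, d\nu_W(w_1)\, d\nu_W(w_2).
\]
The goal is to show that $(\mu_W,c)$ is again dual-feasible; since $W$-invariant measures form a subset of all feasible measures, this suffices to conclude.

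Three of the four required properties of $\mu_W$ are routine. Non-negativity is immediate since $\mu_W$ is a weighted average of pushforwards of $\mu$. The containment $\mathrm{supp}(\mu_W) \subseteq \overline{A^c}$ follows from the hypothesis $wA = Aw = A$, which forces $w_1^{-1} A^c w_2 = A^c$ for $w_1,w_2 \in W$, so each pushforward $L_{w_1}^* R_{w_2}^* \mu$ is still supported in $\overline{A^c}$. Left- and right-invariance of $\nu_W$ give $L_w^* \mu_W = R_w^* \mu_W = \mu_W$ for $w \in W$ by a standard change of variable.

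The nontrivial step is positive-definiteness of $\delta_e + \mu_W - c\nu$, which by the definition of $\sM_{\succeq 0}(G)$ amounts to
\[
    f(e) + \int f\, d\mu_W - c\int f\, d\nu \;\geq\; 0 \qquad \text{for every } f \in C_{\succeq 0}(G).
\]
The key idea is not to average the dual certificate directly (since $\delta_e$ is not $W$-invariant) but instead to push the averaging onto the test function $f$. By Fubini,
\[
    \int f\, d\mu_W = \int f_W\, d\mu, \qquad f_W(g) \colonequals \int_W\int_W f(w_1^{-1} g w_2)\, d\nu_W(w_1)\, d\nu_W(w_2),
\]
and after the change of variable $w_1 \mapsto w_1^{-1}$ the proof of Theorem~\ref{inv function} applies verbatim to show $f_W \in C_{\succeq 0}(G)$. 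Applying the dual feasibility of $\mu$ to $f_W$, and using Haar bi-invariance $\int f_W\, d\nu = \int f\, d\nu$, yields
\[
    f_W(e) + \int f\, d\mu_W - c\int f\, d\nu \;\geq\; 0.
\]
To close the gap, note that $f_W(e) = \int\int f(w_1^{-1} w_2)\, d\nu_W(w_1) d\nu_W(w_2)$ is real (swap $w_1 \leftrightarrow w_2$ and use $f(g^{-1}) = \overline{f(g)}$) and satisfies $f_W(e) \leq \int\int |f(w_1^{-1} w_2)|\, d\nu_W d\nu_W \leq f(e)$ by the classical bound $|f(g)| \leq f(e)$ for positive definite $f$; substituting gives the desired inequality.

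The main obstacle, as anticipated, is the asymmetry between $\delta_e$ and $\nu$: while $\nu$ is $G$-invariant and $\mu$ can be replaced by $\mu_W$, $\delta_e$ itself cannot be symmetrized without moving mass away from the identity. The resolution is the clean duality between the primal and dual averaging operations, combined with the observation that the cost $f(e) - f_W(e)$ incurred by transferring the averaging to the test function is automatically non-negative for positive definite $f$.
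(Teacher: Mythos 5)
Your proof is correct, and it takes a genuinely different route from the paper's. Both arguments symmetrize $\mu$ by the same double average over $W \times W$, but the resulting positivity check is done in different ``domains.'' The paper verifies $\delta_e + [\mu] - c\nu \in \mathcal{M}_{\succeq 0}(G)$ irrep by irrep via the Fourier transform: for non-trivial $\pi$ the averaging becomes conjugation by the projection $\bm P = \int_W \pi(w)\, d\nu_W(w)$, and positivity follows from the elementary matrix fact $\bm I \succeq \bm P \bm I \bm P$. Your argument instead stays at the level of the dual pairing $\langle f, \lambda\rangle$ with test functions $f \in C_{\succeq 0}(G)$: you use Fubini to transfer the averaging from the measure onto $f$, invoke dual feasibility of $\mu$ against $f_W$, and absorb the resulting discrepancy $f(e) - f_W(e)$ with the classical bound $\lvert f(g)\rvert \leq f(e)$. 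These two closing inequalities are essentially the same fact viewed in physical versus Fourier coordinates---indeed $\bm P \preceq \bm I$ is exactly what makes $f_W(e) \leq f(e)$ after decomposing $f$ into matrix coefficients---but your presentation has the merit of making the primal/dual symmetry with Theorem~\ref{inv function} explicit (the same operator $h \mapsto h_W$ appears on both sides, once applied to the primal variable and once to the test function), and it avoids the irrep bookkeeping. The paper's approach, on the other hand, is the one that plugs directly into the subsequent computations of $\widehat{\mu_{[\bH]}}(\pi) = \bm\Pi_{\TT^d\rtimes S_d}\pi(\bH)\bm\Pi_{\TT^d\rtimes S_d}$, so its irrep-level description of $[\mu]$ is what gets reused later. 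One small point worth stating explicitly in your writeup: the application of Theorem~\ref{inv function}'s proof to conclude $f_W \in C_{\succeq 0}(G)$ uses only that $f$ is positive definite, not the other feasibility constraints on $h$ in the primal, so that step is indeed valid in this greater generality.
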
 

\begin{proof}
    Suppose $\mu$ is feasible for \eqref{eq: mub maxi}, i.e., that $\delta_e + \mu \succeq c\nu $.
    Then, we want to show that $[\mu]$ defined by 
    \[
        [\mu] \colonequals \int_W \int_W R_{w_1}^*L_{w_2}^*\mu d \nu_W(w_1)d \nu_W(w_2),
    \]
    for $\nu_W$ the Haar probability measure on W, fulfills 
    \[
        \delta_e + [\mu] \succeq c\nu.
    \]
    We will show this by checking the value of the Fourier transform on each irrep of $G$.
    
    For the trivial representation, the associated scalar inequality is $1 + \mu(G) \geq c$, which is the same as the feasibility condition for $\mu$ evaluated on the trivial representation, since $\int_G d\mu = \int_G d[\mu] = \mu(G)$.
    Suppose now that $\pi$ is non-trivial.
    Then, the feasibility condition for $\mu$ gives us
    \[
        \bm{I}+\int \pi(g) d \mu \succeq 0.
    \]
    We then have
     \begin{align*}
        \bm{I}+ \int_W \int_W \int_G \pi(w_1^{-1} g w_2) d\nu_W(w_1) d \mu(g) d \nu_W(w_w) & = \bm{I}  +\bm P\int \pi(g) d \mu \bm P.
        \end{align*}
     For 
     \[
        \bm P = \int_W \pi(w) d \nu_W(w) = \int_W \pi(w^{-1}) d \nu_W(w) = \bm P^*,
    \]
    which, as the name suggests, is a projection onto the subspace invariant under $\pi(w)$ for any $w \in W$.
    Since we have
    \[
        \bm{I}+\int \pi(g) d \mu \succeq 0,
    \]
    we also have
        \[
        \bm P\left(\bm{I}+\int \pi(g) d \mu \right)\bm P \succeq 0,
    \]
    and since $\bm P$ is a projection we have $\bm{I} \succeq \bm P = \bm P\bm{I}\bm P$, whereby
    \[
    \bm{I} + \bm P \int_G \pi(g) d \mu(g) \bm P \succeq \bm P\left(\bm{I}+\int \pi(g) d \mu\right)\bm P \succeq 0,
    \]
    completing the proof.
\end{proof}

\begin{Rem}
    $S_d$, $\mathbb{T}^d$ and their semidirect product can be realized as subgroups of $U(d)$. All of these groups are equipped with Haar measures, which we will denote by $\nu_{\mathbb{T}^d}, \nu_{S_d}$ and  $\nu_{\mathbb{T}^d \rtimes S_d}$ respectively. We can abuse notation and reuse this notation for the push-forward of these under the natural inclusion as measures on $U(d)$. We then have, in the notation of Appendix~\ref{app:star-algebra},
    \[
        \nu_{\mathbb{T}^d \rtimes S_d} = \nu_{S_d} * \nu_{\mathbb{T}^d} =  \nu_{\mathbb{T}^d}* \nu_{S_d}
    \]
    as well as 
    \[
        \nu_H = \nu_H^* = \nu_H^* * \nu_H,
    \]
    for each group $H \in \{ S_d, \mathbb{T}^d, \mathbb{T}^d \rtimes S_d, U(d) \}$.
    
    The operation $[ \cdot]$ described in Theorem~\ref{lem: inv measure} can then be defined concisely as
    \[
    [\lambda] = \nu_{\mathbb{T}^d \rtimes S_d} * \lambda *  \nu_{\mathbb{T}^d \rtimes S_d}.
    \]
    Further, $\bm \Pi_{H}$ is equal to $\widehat{\nu_H}(\pi)$ whenever it occurs in the main text.
\end{Rem}

\section{Banach \texorpdfstring{$*$-algebra}{*-algebra} interpretation of positive definite measures}
\label{app:star-algebra}

We mention an alternative viewpoint on positive definite measures that clarifies some matters raised in the main text.
If $G$ is a compact group, then $\mathcal{M}(G)$ is a Banach $*$-algebra under the convolution product $\lambda * \mu$ defined by
\[
    \int_G f(g) \, d (\lambda * \mu)(g) \colonequals \int_G \int_G f(gh) \, d  \lambda(g) d \mu(h)
\]
and the involution $\lambda \mapsto \lambda^*$ defined by 
\[
\int_G f(g) \, d \lambda^*(g) = \overline{\int_G \overline{f(g^{-1})}\, d \lambda(g)}.
\]
We note that $\lambda$ here is \emph{a priori} a complex-valued measure, rather than the real-valued measures that we deal with in the majority of the main text, so we cannot merely cancel the two conjugations.

One can then check that, for any representation $\pi$ of $G$, the map that evaluates the Fourier transform at $\pi$,
\[
    \lambda \mapsto \widehat{\lambda}(\pi),
\]
is a $*$-algebra morphism.
That is, we have
\begin{align*}
\widehat{\lambda * \mu}(\pi) &= \widehat{\lambda}(\pi) \widehat{\mu}(\pi), \\
    \widehat{\lambda^*}(\pi) &= (\widehat{\lambda}(\pi))^*, \\
\widehat{\lambda+ \mu}(\pi) &= \widehat{\lambda}(\pi)+\widehat{\mu}(\pi).
\end{align*}
Combining this with Theorem~\ref{thm:pos-def-measures} then gives that all measures of the form $\lambda *\lambda^*$ are positive definite, i.e., $\lambda * \lambda^* \in \mathcal{M}_{\succeq 0}(G)$.

\section{Representation theory of \texorpdfstring{$U(d)$}{U(d)}}\label{app:rep-Ud}

The unitary group $U(d)$ consists of all $d \times d$ complex matrices $\bm{U}$ which satisfy $\bm{U}^*\bm{U} = \bm{I}$. It is a compact topological group (indeed, also a Lie group) under the subspace topology of $\CC^{d \times d}$ and the operation of matrix multiplication. 
We review some standard facts about its representation theory below, drawing on the standard references \cite{Fulton-1997-YoungTableaux,fulton2013representation} for general theory and \cite{Sengupta2012} for aspects of the specific case of $U(d)$.

\subsection{Young diagrams, tableaux, and symmetrizers}
\label{app:young}

Given a partition of $n$, $\bbf = (f_1, \dots, f_k)$ with $\sum f_i = n$ and $f_1 \geq f_2 \geq \dots \geq f_k > 0$, we can associate to it a \textit{Young diagram}, which is a collection of boxes with $f_1$ boxes in the first row, $f_2$ in the second, and so on. 
For example, this is the diagram associated to the partition $(5,3,1)$:
\begin{center}
\vspace{1em}
\ydiagram{ 5, 3, 1}
\vspace{1em}
\end{center}

A way of filling these boxes with numbers is a \textit{Young tableau} of \emph{shape} $\bm{f}$.
A Young tableau is called \emph{standard} if it is increasing in both the rows and columns, and \emph{semistandard} if it is non-decreasing in the rows and increasing in the columns. 
We call the \emph{default} standard tableau of a given shape (this is not typical terminology) the one obtained by putting in the numbers $1, \dots, n$ consecutively across the rows, as in, for example:
\begin{center}
\vspace{1em}
\ytableausetup{centertableaux}
    \begin{ytableau}
        1 & 2 & 3 & 4 & 5 \\
        6 & 7 & 8  \\
        9
    \end{ytableau}
\vspace{1em}
\end{center}

We will denote the set of semistandard Young tableaux with shape $\bm{f}$ and filled with numbers in $1, \dots ,m$ by  $\mathcal{Y}_{\bm{f}}(m)$. For instance,
\begin{equation}
\ytableausetup{centertableaux}
    \begin{ytableau}\label{ex tableau}
        1 & 1 & 1 & 1 & 3 \\
        2 & 2 & 2  \\
        3
    \end{ytableau}
\end{equation}
is in $\mathcal{Y}_{(5,3,1)}(3)$.

The \emph{content} $\bm{c}$ of a Young tableau $T \in \mathcal{Y}_{\bm{f}}(m)$ has $c_i$ equal to the number of times $i$ appears in $T$.
For example, the content of \eqref{ex tableau} is $(4,3,2)$.  $\mathcal{Y}_{\bm{f}}^{\bm{c}}(m)$ will denote the set of Young tableaux in  $\mathcal{Y}_{\bm{f}}(m)$ with content $\bm{c}$.

Finally, given a Young diagram of $n$ boxes associated to a partition $\bbf$ and $T_0$ the default standard Young tableau, we let $R \leq S_n$ be the subset of permutations that fix each row of $T_0$ and $C \leq S_n$ the subset of permutations that fix each column of $T_0$.
Suppose $V$ is some fixed vector space, then for $\sigma \in S_n$ we let $\bF_{\sigma}: V^{\otimes n} \to V^{\otimes n}$ be the linear operator permuting the tensor axes according to $\sigma$ (i.e., mapping $v_1 \otimes \cdots \otimes v_n \mapsto v_{\sigma(1)} \otimes \cdots \otimes v_{\sigma(n)}$ and extending this action by linearity to all tensors).
Clearly we have $\bF_{\sigma\tau} = \bF_{\sigma}\bF_{\tau}$ (indeed, the mapping $\sigma \mapsto \bF_{\sigma}$ is a representation of the symmetric group).
Then, we define the \emph{Young symmetrizer} associated to this Young diagram on $V^{\otimes n}$ to be the linear operator
\[ \bY_{\bbf} \colonequals \sum_{\sigma \in R} \sum_{\tau \in C} \sgn(\tau) \bF_{\tau\sigma}. \]

\subsection{Irreducible representations of \texorpdfstring{$U(d)$}{U(d)}}
\label{app:irreps}

We now describe the irreps of $U(d)$, following \cite{Sengupta2012}.
This description begins with the observation that, inside $U(d)$, there is a maximal abelian subgroup of diagonal matrices
\[
    \bD_{\bm\lambda} = \begin{pmatrix}\lambda_{1} & & \\ & \ddots & \\ & & \lambda_{d}\end{pmatrix}
\]
with $\lvert\lambda_i\rvert = 1$, which is isomorphic to the $d$-dimensional torus $\mathbb{T}^d$, viewed as the $d$-fold direct product of the unit circle in $\CC$ with itself.

\subsubsection{Weights}
In this subsection we will see how the character of an irrep only depends on a vector $\bm{w} \in \mathbb{Z}^d$ which we will call the \emph{weight} of the representation. 

Consider any irrep $\pi$ of $U(d)$ on a finite dimensional vector space $V$ of dimension $m$. The linear maps $\pi(\bD_{\bm{\lambda}}) $ for $\bm{\lambda} \in \mathbb{T}^d$ commute with each other, so there is a basis of $V$ with respect to which they are diagonal,
\[
    \pi(\bD_{\bm{\lambda}}) =  \begin{pmatrix}\pi_{1}(\bD_{\bm{\lambda}}) & & \\ & \ddots & \\ & & \pi_{m}(\bD_{\bm{\lambda}})\end{pmatrix}.
\]
Each $\pi_i$ is a one-dimensional representation of $\mathbb{T}^d$, a continuous group homomorphism into $\CC$.
Such group homomorphisms are of the form
\[
    \pi_i(\bD_{\bm{\lambda}}) = \bm{\lambda}^{\bm{w}_i} = \prod_{j=1}^d \lambda_j^{w_{ij}} 
\]
for a unique $\bm{w}_i = (w_{i1}, \dots ,w_{id}) \in \mathbb{Z}^d$.
Thus, $V$ admits a decomposition into \emph{weight subspaces}, $V = V_1 \oplus \cdots \oplus V_{\ell}$, where $V_i$ is the coordinate subspace in the basis diagonalizing the $\pi(\bm\lambda)$ corresponding to the coordinates $i$ with a given weight.

We introduce the \emph{lexicographic ordering} on $\mathbb{Z}^d$, for which $\bv > \bv^{\prime}$ if the first nonzero entry in $\bv - \bv^{\prime}$ is positive.
The weight $\bw$ of $\pi$ that is largest among all the weights under this ordering is called \emph{the weight of $\pi$}.
It turns out that $\pi$ is uniquely determined by this weight, as is shown in the presentation of \cite{Sengupta2012} by verifying that the character of $\pi$ is determined by this weight.

Let us give this description of the character, since it will also be useful below.
We introduce the polynomials
\[
    a_{\bm{f}}(\bm{\lambda} ) \colonequals \sum_{\sigma \in S_d} \text{sgn}(\sigma) \prod_{i = 1}^d \lambda_{\sigma(i)}^{f_i}
    = \det \begin{pmatrix} \lambda_1^{f_1} & \dots & \lambda_d^{f_1} \\
    \vdots & & \vdots \\
    \lambda_1^{f_d} & \dots & \lambda_d^{f_d} \end{pmatrix}.
\]
We then have the following result.
\begin{Lemma}\label{lem:Ud-irrep-character}
Suppose $\pi$ is an irrep of $U(d)$ with weight $\bw = (w_1, \dots, w_d)$.
Then, the character $\chi_\pi$ of $\pi$ is the unique function on $U(d)$ that is constant on conjugacy classes and whose value on diagonal matrices is given by
\[
    \chi_\pi(\bm D_{\bm\lambda}) = \frac{a_{(w_1+d-1,\dots, w_d)}(\bm{\lambda})}{a_{(d-1,\dots,1,0)}(\bm{\lambda})}. 
\]
The right-hand side is a polynomial upon performing the polynomial division.
\end{Lemma}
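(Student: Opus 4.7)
The plan is to prove the claim in several steps, drawing on classical facts about the representation theory of compact Lie groups. First, I would verify the class function property, which follows immediately from the cyclic invariance of the trace: $\chi_\pi(g h g^{-1}) = \tr(\pi(g)\pi(h)\pi(g)^{-1}) = \chi_\pi(h)$. Combined with the spectral theorem (every $\bU \in U(d)$ is unitarily conjugate inside $U(d)$ to some $\bm D_{\bm\lambda}$ with each $\lvert\lambda_i\rvert = 1$), this reduces the problem to computing $\chi_\pi$ on the maximal torus.

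Next, using the weight decomposition $V = \bigoplus_{\ba \in \ZZ^d} V_\ba$ under $\TT^d$ introduced above the lemma, since $\pi(\bm D_{\bm\lambda})$ acts on $V_\ba$ as the scalar $\bm\lambda^\ba = \prod_i \lambda_i^{a_i}$, we obtain
\[ \chi_\pi(\bm D_{\bm\lambda}) = \sum_{\ba \in \ZZ^d} \dim(V_\ba)\, \bm\lambda^\ba. \]
Invoking the class function property with $g = \bm P_\sigma$ and noting that conjugation by a permutation matrix permutes the diagonal entries of $\bm D_{\bm\lambda}$, this Laurent polynomial must be symmetric in the $\lambda_i$.

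The central step is the Weyl character formula proper. My plan is to invoke two classical ingredients: (i) the Weyl integration formula, which asserts that for any class function $f$ on $U(d)$,
\[ \int_{U(d)} f \, d\nu = \frac{1}{d!} \int_{\TT^d} f(\bm D_{\bm\lambda}) \, \lvert a_\delta(\bm\lambda)\rvert^2 \, d\bm\lambda, \]
with $\delta = (d-1, d-2, \dots, 1, 0)$; and (ii) Schur orthogonality of the characters $\chi_\pi$ in $L^2(U(d), \nu)$. Together these show that the quotients $a_\bbf / a_\delta$, indexed by strictly decreasing integer tuples $\bbf$, form an orthonormal family with respect to $\frac{1}{d!}\lvert a_\delta\rvert^2 \, d\bm\lambda$. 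Since $a_\delta \cdot \chi_\pi(\bm D_{\bm\lambda})$ is an antisymmetric Laurent polynomial (the product of a symmetric one with the alternating $a_\delta$) and every such polynomial is a $\ZZ$-linear combination of the $a_\bbf$'s, orthonormality forces $\chi_\pi = \pm a_\bbf / a_\delta$ for a unique strictly decreasing $\bbf$. To pin down $\bbf$ and the sign, I would expand both sides in the lexicographic ordering: the leading term of $a_\bbf/a_\delta$ is $\bm\lambda^{\bbf - \delta}$ with coefficient $+1$, while by the definition recalled just before the lemma $\bw$ is the lexicographically largest weight of $\pi$, occurring with multiplicity $\dim V_{\bw} \geq 1$. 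Matching the leading terms forces $\bbf = \bw + \delta = (w_1 + d - 1, w_2 + d - 2, \dots, w_d)$ with coefficient $+1$, and after performing the polynomial division one obtains a genuine (Laurent) polynomial, as claimed.

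The main obstacle is that both the Weyl integration formula and Schur orthogonality require nontrivial background; in keeping with the appendix's expository character, I would treat these as standard and cite \cite{Sengupta2012,fulton2013representation}. A more combinatorial alternative, which meshes well with the setup of Appendix~\ref{app:young}, is to realize $V_{\bw}$ concretely as the image of a Young symmetrizer inside $\eta^{\otimes \lvert w_d \rvert} \otimes \rho^{\otimes n}$ and compute its character directly via Schur--Weyl duality; this reduces the identity to the classical Jacobi formula $s_\bbf(\bm\lambda) = a_{\bbf+\delta}(\bm\lambda)/a_\delta(\bm\lambda)$ for Schur polynomials, adjusted by the factor $(\lambda_1 \cdots \lambda_d)^{-\lvert w_d \rvert}$ coming from the determinant twist $\eta^{\otimes \lvert w_d \rvert}$.
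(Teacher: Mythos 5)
The paper states this lemma without proof, deferring to \cite{Sengupta2012}, and your sketch correctly reproduces the standard argument for the Weyl character formula that such references give: reduce to the maximal torus via the class-function property, expand $\chi_\pi(\bD_{\bm\lambda})$ in weights as a symmetric Laurent polynomial, note that $a_{(d-1,\dots,0)}\cdot\chi_\pi$ is alternating and hence an integer combination of the $a_{\bbf}$, invoke the Weyl integration formula (the paper's Proposition~\ref{weyl int}) together with Schur orthogonality to conclude this combination is a single $\pm a_{\bbf}$, and finally match lex-leading terms to pin down $\bbf = \bw + (d-1,\dots,1,0)$ and the sign. Both this route and the Schur--Weyl/Jacobi bialternant alternative you mention are correct; your argument is sound.
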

\noindent
Note that the denominator is the ordinary Vandermonde determinant:
\[ a_{(d-1,\dots,1,0)}(\bm{\lambda}) = \Delta(\bm \lambda) \colonequals \det \begin{pmatrix} \lambda_1^{0} & \dots & \lambda_d^{0} \\
    \vdots & & \vdots \\
    \lambda_1^{d - 1} & \dots & \lambda_d^{d - 1} \end{pmatrix} = \prod_{1 \leq i < j \leq d}(\lambda_j - \lambda_i). \]

\subsubsection{From weights to representations}\label{sec weight}

Our next goal is to construct an irrep of $U(d)$ with a given weight $\bm{w} \in \mathbb{Z}^d_{\downarrow}$. It will be convenient to work first with a vector $\bm{f} \in \mathbb{Z}^d_{\downarrow}$ all of whose components are non-negative. We take $\bm{f}$ given by $f_j = w_j-w_d$ if this is not the case. This $\bm f$ is a partition of $n \colonequals \sum (w_j - w_d)$.

We then realize $\pi$ on a subspace $V_{\bm f} \subset (\CC^d)^{\otimes n}$, on which $\bU \in U(d)$ acts as $\bU^{\otimes n}$.
This subspace is just $V_{\bm f} = \bY_{\bf}(\CC^d)^{\otimes n}$, where $\bY_{\bm f}$ is the Young symmetrizer defined in Appendix~\ref{app:young}.
We also describe an explicit basis for this subspace below.

For $w_d < 0$, call $\pi_{\bm f}$ the representation for $\bm f$ obtained above.
We then instead consider the representation
\[
    \pi_{\bm{w}} = \pi_{\bm f}\otimes \eta^{\otimes \lvert w_d \rvert},
\]
where $\eta$ is the one-dimensional representation with $\eta(\bU) = \overline{\det(\bU)}$.
Irreducibility of $\pi_{\bm{f}}$ implies irreducibility of $\pi_{\bm{w}}$, so it suffices to check that $\pi_{\bm w}$ indeed has weight $\bm w$.

We note that $\eta$ is itself an irrep with weight $(-1, \dots, -1)$, since $\eta(\bD_{\bm\lambda}) = \lambda_1^{-1} \cdots \lambda_d^{-1}$.
Thus for $\pi_{\bm{w}}$ we have
\[
    \pi_{\bm w}(\bD_{\bm\lambda}) = \lambda_1^{f_1-w_d}\dots \lambda_d^{f_d-w_d} = \lambda_1^{w_1} \dots \lambda_d^{w_d},
\]
and thus $\pi_{\bm w}$ is an irrep with weight $\bw$.

We note in particular that, as a consequence of this discussion, the irrep with weight $\bw$ occurs as an irreducible subrepresentation of $\rho^{\otimes n} \otimes \eta^{\otimes \lvert w_d \rvert}$, where $\rho$ is the natural representation as in the main text.

\subsubsection{Explicit bases}
\label{sec:explicit-bases}

In addition to the above description, we may give quite explicit bases for $V_{\bm f}$.
Given a tableau $T$ of shape $\bm{f}$,
\begin{center}
\ytableausetup{mathmode, boxsize=3em}
\vspace{1em}
\begin{ytableau}
t_{1,1} & t_{1,2} & t_{1,3} & \none[\dots]
& \scriptstyle t_{1,f_1-1} & t_{1,f_1} \\
t_{2,1} & t_{2,2} & t_{2,3} & \none[\dots]
& t_{2,f_2} \\
\none[\vdots] & \none[\vdots]
& \none[\vdots]
\end{ytableau}
\end{center}
with $t_{i,j} \in [d]$, we can associate an element of $(\mathbb{C}^d)^{\otimes n}$ to $T$ by setting 
\begin{equation}\label{def wa}
     \be_T \colonequals \be_{t_{1,1}} \otimes \be_{t_{1,2}} \otimes   \dots \otimes \be_{t_{1,f_1}} \otimes \dots \otimes \be_{t_{k, 1}} \otimes \cdots \otimes \cdots \be_{t_{k, f_k}},
\end{equation}
taking tensor products of basis vectors across the rows of $T$.
We then have the following description.
\begin{Lemma}[Proposition 21 of \cite{sage2011explicit}] \label{lem:basis-irrep}
    The set 
    \[
        \{ \bY_{\bm f}\bm e_T : T \in \mathcal{Y}_{\bm{f}}(d) \}
    \]
    forms a basis of $V_{\bm{f}}$.  
\end{Lemma}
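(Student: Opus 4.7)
The plan is to establish spanning and linear independence of the set $\{\bY_{\bm f}\be_T : T \in \mathcal Y_{\bm f}(d)\}$ separately, with the latter obtained via a dimension count. By definition $V_{\bm f} = \bY_{\bm f}(\CC^d)^{\otimes n}$, and the standard basis of $(\CC^d)^{\otimes n}$ is indexed by \emph{arbitrary} fillings $T$ of the diagram of shape $\bm f$ with entries in $[d]$, so every element of $V_{\bm f}$ is already a linear combination of the $\bY_{\bm f}\be_T$ ranging over all fillings. The spanning step is to reduce this to $T$ semistandard.

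For that reduction I would use three classical manipulations. First, since $\bY_{\bm f}$ factorizes as $\bm a_C \bm b_R$ where $\bm b_R = \sum_{\sigma \in R}\bF_\sigma$ is the row-symmetrizer, the identity $\bY_{\bm f}\bF_\sigma \be_T = \bY_{\bm f}\be_T$ for $\sigma \in R$ shows that $\bY_{\bm f}\be_T$ is invariant under permuting the entries of $T$ within rows, so we may assume every row of $T$ is weakly increasing. Second, if two positions in the same column of $T$ carry the same entry, the column transposition $\tau$ exchanging them lies in $C$ and satisfies $\bF_\tau\be_T = \be_T$ while $\sgn(\tau) = -1$, so pairing the identity and $\tau$ terms inside the column-antisymmetrizer $\bm a_C$ forces $\bY_{\bm f}\be_T = 0$. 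Third, if $T$ has weakly increasing rows but a descent between some pair of adjacent columns, the Garnir relation for that pair expresses $\bY_{\bm f}\be_T$ as a linear combination of $\bY_{\bm f}\be_{T'}$ with each $T'$ strictly smaller than $T$ in a suitable partial order (e.g.\ reverse lexicographic on the column-reading word). Inducting on this order reduces every $\bY_{\bm f}\be_T$ to a combination indexed by $\mathcal Y_{\bm f}(d)$.

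For the dimension count, I would invoke Lemma~\ref{lem:Ud-irrep-character}, which identifies the character of $V_{\bm f}$ on diagonal matrices as the Schur polynomial
\[
    s_{\bm f}(\bm\lambda) = \frac{a_{(f_1+d-1,\,f_2+d-2,\,\dots,\,f_d)}(\bm\lambda)}{a_{(d-1,\dots,1,0)}(\bm\lambda)}.
\]
The classical combinatorial identity
\[
    s_{\bm f}(\lambda_1, \dots, \lambda_d) = \sum_{T \in \mathcal Y_{\bm f}(d)} \prod_{i=1}^d \lambda_i^{c_i(T)},
\]
where $c_i(T)$ denotes the multiplicity of $i$ in $T$ (see, e.g., \cite{Fulton-1997-YoungTableaux}), then yields $\dim V_{\bm f} = s_{\bm f}(1, \dots, 1) = \lvert \mathcal Y_{\bm f}(d)\rvert$. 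Combined with the spanning step, this equality of cardinality and dimension forces the listed vectors to be linearly independent and hence to form a basis.

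The main obstacle is the Garnir step: although entirely classical, it requires care to choose the partial order on fillings so that each application of a Garnir relation strictly decreases $T$, and to verify that the induction terminates only once every remaining filling is semistandard. The Schur-to-tableau identity used for the dimension count, by contrast, is purely symbolic and poses no additional difficulty.
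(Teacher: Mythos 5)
The paper does not give a proof of this lemma; it is stated as a citation (``Proposition~21 of \cite{sage2011explicit}'') and used as an external input. Your proposal therefore fills a gap the paper deliberately leaves to the literature, and what you outline is the standard argument: show the $\bY_{\bm f}\bm e_T$ for all fillings $T$ span $V_{\bm f}$ by definition, reduce to semistandard $T$ via row-straightening, column-repetition vanishing, and Garnir relations, then match the count $\lvert\mathcal Y_{\bm f}(d)\rvert$ against $\dim V_{\bm f}$ computed from the Weyl character formula (Lemma~\ref{lem:Ud-irrep-character}) and the tableau expansion of the Schur polynomial. This is correct in outline, and you rightly flag the Garnir step as the one that requires genuine care: one must fix a termination order on fillings and check that a Garnir move strictly decreases it, and that the only irreducible fillings under this order are the semistandard ones. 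As stated, that step is a pointer to the literature rather than a proof, so if this were meant to stand on its own it would still be incomplete there.

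Two smaller points worth tightening. First, the column-repetition vanishing: you phrase it as pairing the identity with $\tau_0$ ``inside $\bm a_C$,'' but with the paper's convention $\bY_{\bm f} = \bm a_C\bm b_R$ the row symmetrizer acts on $\be_T$ first, so the cancellation must be seen in the double sum $\sum_{\sigma\in R}\sum_{\tau\in C}\sgn(\tau)\bF_{\tau\sigma}\be_T$: if $\tau_0\in C$ transposes two same-column positions with equal entries of $T$, pair $\tau$ with $\tau_0\tau$; since $t_{\tau_0(m)} = t_m$ for all $m$ one gets $\bF_{\tau_0\tau\sigma}\be_T = \bF_{\tau\sigma}\be_T$ while the signs differ, so every term cancels. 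Second, the dimension count silently uses that $V_{\bm f}$ is irreducible with highest weight $\bm f$, so that Lemma~\ref{lem:Ud-irrep-character} applies; that is consistent with the paper's setup in Appendix~\ref{sec weight}, but in a self-contained proof it is precisely the kind of fact that is usually established together with the tableau basis, so one should be explicit about which direction is being taken as input.
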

\noindent
Note that this result implicitly contains the standard description of $\dim(\pi_{\bm f})$ as the number of semistandard Young tableaux of shape $\bm f$ with entries in $[d]$.

A further extension of this result gives explicit bases for each individual weight subspace of $V_{\bm f}$ (note that the weight subspaces of $V_{\bw}$ are the same as those of the associated $V_{\bf}$ after shifting all weights by $-w_d$).
\begin{Lemma} \label{lem:basis-weight-subspace}
    The set 
    \[
        \{ \bY_{\bm f}\bm e_T : T \in \mathcal{Y}_{\bm{f}}^{\bm w}(d) \}
    \]
    forms a basis of the weight subspace of $V_{\bm{f}}$ having weight $\bw$. 
\end{Lemma}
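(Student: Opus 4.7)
The plan is to refine Lemma~\ref{lem:basis-irrep} by tracking how the maximal torus $\mathbb{T}^d$ acts on each basis vector $\bY_{\bm f}\bm e_T$, and then to split the existing basis of $V_{\bm f}$ according to this action.

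First, I would verify that $\bY_{\bm f}$ commutes with $\bD_{\bm\lambda}^{\otimes n}$ for every $\bm\lambda \in \mathbb{T}^d$. Since $\bY_{\bm f}$ is a $\CC$-linear combination of tensor-axis permutations $\bF_\rho$, and each $\bF_\rho$ manifestly commutes with $\bD_{\bm\lambda}^{\otimes n}$ (which acts by the same diagonal factor on every tensor axis), the claim is immediate.

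Next, I would compute the weight of $\bm e_T$ directly from the definition~\eqref{def wa}: if $T$ has content $\bw$, then $\bD_{\bm\lambda}^{\otimes n}\bm e_T = \bm\lambda^{\bw}\bm e_T$, exhibiting $\bm e_T$ as a weight vector of weight $\bw$. Combined with the commutation property, $\bY_{\bm f}\bm e_T$ also lies in the weight-$\bw$ subspace of $V_{\bm f}$ (and vanishes if and only if the corresponding element of the basis from Lemma~\ref{lem:basis-irrep} does, which never happens for $T \in \mathcal{Y}_{\bm f}(d)$).

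Finally, I would combine this with Lemma~\ref{lem:basis-irrep} via the elementary fact that any basis of joint eigenvectors for a family of commuting diagonalizable operators partitions into bases of the joint eigenspaces. Applied to the family $\{\bD_{\bm\lambda}^{\otimes n}|_{V_{\bm f}} : \bm\lambda \in \mathbb{T}^d\}$, whose joint eigenspaces are exactly the weight subspaces of $V_{\bm f}$, and using that the joint eigenvalue associated to $\bY_{\bm f}\bm e_T$ is determined by the content of $T$, this gives that $\{\bY_{\bm f}\bm e_T : T \in \mathcal{Y}_{\bm f}^{\bw}(d)\}$ is a basis of the weight-$\bw$ subspace. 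I do not anticipate a substantive obstacle: the statement is essentially a content-refinement of Lemma~\ref{lem:basis-irrep}, and the only point requiring care is that the torus-weight decomposition of $V_{\bm f}$ agrees exactly with the content-based partition of the ambient basis.
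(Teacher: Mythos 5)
Your proof is correct, and it is exactly the argument the paper implicitly relies on — the paper states Lemma~\ref{lem:basis-weight-subspace} without proof, merely describing it as ``a further extension'' of Lemma~\ref{lem:basis-irrep}, which is the content-refinement you carry out. The three ingredients you assemble (that $\bY_{\bm f}$ commutes with $\bD_{\bm\lambda}^{\otimes n}$, that $\bm e_T$ has weight equal to the content of $T$, and that a basis of joint eigenvectors for a commuting diagonalizable family partitions into bases of the joint eigenspaces) are all correct, and the dimension-count that closes the last step is standard. One stylistic note: the parenthetical about $\bY_{\bm f}\bm e_T$ ``vanishing if and only if the corresponding element of the basis does'' is slightly confusing since $\mathcal{Y}_{\bm f}^{\bw}(d) \subset \mathcal{Y}_{\bm f}(d)$, so these are literally the same vectors, not ``corresponding'' ones — the non-vanishing is just part of Lemma~\ref{lem:basis-irrep} being a basis statement; but this is a wording issue, not a gap.
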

\noindent
This result likewise gives a description of the dimensions of various weight subspaces as the numbers of semistandard Young tableaux with shape $\bf$ and various contents, as well as a description of the set of weights of $\pi_{\bm f}$.

\subsection{Finding a non-zero element of \texorpdfstring{$\widetilde{V}_{\bm w}$}{tildeV\_w}}
\label{app:tV-nonzero}

In the main text, we are concerned with the subspace $\widetilde{V}_{\bm w} \subseteq V_{\bm w}$ that is invariant under the action of $\TT^d \rtimes S_d < U(d)$.
As we discuss in Section~\ref{sec:subspace-Td}, the subspace invariant under $\TT^d$ is precisely the subspace associated to the weight $(0, \dots, 0)$ in the above sense.
Accordingly, Lemma~\ref{lem:basis-weight-subspace} gives that the $\bY_{\bm f} \bm e_T$ for $T$ a semistandard Young tableau of weight $\bm f$ and content $(\lvert w_d \rvert, \dots, \lvert w_d \rvert)$ (i.e., a tableau where each $i \in [d]$ occurs an equal number of times) form a basis for $\bm \Pi_{\TT^d} V_{\bm w}$.

For the computations in Section~\ref{sec:final-verification}, in the case where $\widetilde{V}_{\bm w} \neq \{\bm 0\}$, we need only find a single non-zero vector in $\widetilde{V}_{\bm w}$.
Equivalently, it suffices to find $T$ a tableau as above such that $\bm \Pi_{S_d} \bY_{\bm f} \be_T \neq \bm 0$.
Since $S_d$ is finite, the projection $\bm \Pi_{S_d}$ is given in a closed form that can be used computationally, so we simply search for such $T$ by brute force.
We remark that the above does not hold for \emph{every} $T$ of suitable shape and content, so it is indeed important to check that the appropriate projection is non-zero.

\subsection{Decomposition of tensor products of natural representations}

In the main text, we need to obtain restrictions on the irreps occurring in certain tensor product representations, which we consider here.
The \emph{natural representation} $\rho$ of $U(d)$ is the $d$-dimensional irrep with $\rho(\bU) = \bU$.
Its dual is $\rho^*(\bU) = \overline{\bU}$.
In the main text, we wish to constrain the irreps occurring in tensor products of the form
\[ \rho_{r, r^{\prime}} \colonequals \rho^{\otimes r} \otimes (\rho^*)^{\otimes r^{\prime}}, \]
for some $r, r^{\prime} \geq 0$.

More specifically, we wish to constrain the irreps occurring in \emph{all} such tensor products with $r + r^{\prime} \leq k$ for some $k$.
We recall that the reason for this is that the matrix coefficients of these tensor product representations, taken together, span $\CC_{\leq k}[\bU, \overline{\bU}]$, the space of polynomials of degree at most $k$ in the $U_{ij}$ and their conjugates.

We obtain such constraints using character theory.
The character of the tensor product representation is invariant on conjugacy classes and on diagonal matrices is given by
\[
    \tr(\rho_{r,r^{\prime}}(\bD_{\bm\lambda})) = \left(\sum_{i=1}^d \lambda_i\right)^r \left(\sum_{i=1}^d \overline{\lambda_i}\right)^{r^{\prime}}.
\]
Using Lemma~\ref{lem:Ud-irrep-character}, the multiplicity of an irrep $\pi$ with weight $\bw$ in $\rho_{r, r^{\prime}}$ is given by the inner product of characters,
\[
    m_{\bw \to r, r^{\prime}} \colonequals \int_{U(d)} \chi_\pi(\bm{U})  \overline{\chi_{\rho_{r,r'}}(\bm{U})}d\nu(\bm{U}).
\]

To compute this, we use the following general tool for integrating functions constant on conjugacy classes (this also plays an important role in the usual derivation of the character formula in Lemma~\ref{lem:Ud-irrep-character}).
\begin{Prop}[Weyl integration formula]\label{weyl int}
    Let $f$ be a continuous function on $U(d)$ that is constant on conjugacy classes. Then,
    \[
        \int_{U(d)} f(\bm{U}) d\bm{U} = \frac{1}{d!} \int_{ \mathbb{T}^d} f(\bm{\lambda})\lvert\Delta(\bm{\lambda})\rvert^2 d\bm{\lambda}.
    \]
\end{Prop}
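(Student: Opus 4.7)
The plan is to parametrize $U(d)$ via its spectral decomposition, compute the Jacobian of this parametrization, and use the conjugation-invariance of $f$ to collapse the integral down to the torus. This is the standard Weyl approach; I would execute it in three steps.

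First, I would introduce the conjugation map
\[
    \Phi: U(d)/\mathbb{T}^d \times \mathbb{T}^d \to U(d), \qquad (\bm V \mathbb{T}^d, \bm D_{\bm\lambda}) \mapsto \bm V \bm D_{\bm\lambda} \bm V^{-1},
\]
which is well-defined on the quotient because right multiplication of $\bm V$ by any diagonal unitary commutes through $\bm D_{\bm\lambda}$. The spectral theorem guarantees surjectivity, and on the open dense subset $\mathbb{T}^d_{\mathrm{reg}} \subset \mathbb{T}^d$ of matrices with pairwise distinct entries, $\Phi$ is a smooth covering of degree $d!$: the fiber over a generic $\bm U$ consists of the $d!$ orderings of its eigenvalues, realized by the Weyl group action $S_d \cong N_{U(d)}(\mathbb{T}^d)/\mathbb{T}^d$.

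Next comes the Jacobian computation. At the Lie-algebra level, I would identify the tangent space to $U(d)/\mathbb{T}^d$ at the identity coset with skew-Hermitian matrices having zero diagonal, complementary to the imaginary diagonals that form the tangent space to $\mathbb{T}^d$. Differentiating $\Phi$ at $(\bm I \mathbb{T}^d, \bm D_{\bm\lambda})$ and left-multiplying by $\bm D_{\bm\lambda}^{-1}$ yields, on the off-diagonal part, the operator $X \mapsto \bm D_{\bm\lambda}^{-1} X \bm D_{\bm\lambda} - X$, which is diagonal in the standard basis and multiplies the $(i,j)$-entry by $\lambda_j/\lambda_i - 1$. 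Each unordered pair $\{i,j\}$ contributes one complex coordinate, so its real Jacobian factor is $\lvert \lambda_j/\lambda_i - 1 \rvert^2 = \lvert \lambda_i - \lambda_j\rvert^2$ since $\lvert\lambda_i\rvert=1$. Multiplying over all pairs gives exactly $\lvert \Delta(\bm\lambda) \rvert^2$.

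Combining the change of variables with the $d!$-fold covering yields
\[
    \int_{U(d)} f(\bm U)\, d\nu(\bm U) = \frac{C}{d!} \int_{U(d)/\mathbb{T}^d} \int_{\mathbb{T}^d} f(\bm V \bm D_{\bm\lambda} \bm V^{-1})\, \lvert\Delta(\bm\lambda)\rvert^2 \, d\bm\lambda \, d(\bm V\mathbb{T}^d),
\]
for some constant $C$ tracking the normalizations of the Haar measures involved. Since $f$ is constant on conjugacy classes, the integrand does not depend on $\bm V$, and the outer integral collapses to the total measure of $U(d)/\mathbb{T}^d$.

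The main obstacle is pinning down the normalizing constant so that the final coefficient is exactly $1/d!$. Rather than grinding through the bookkeeping of quotient measures, I would fix $C$ by testing on $f \equiv 1$: the left-hand side equals $\nu(U(d)) = 1$, while the right-hand side becomes $\frac{C}{d!}\int_{\mathbb{T}^d} \lvert\Delta(\bm\lambda)\rvert^2\, d\bm\lambda$. Expanding
\[
    \lvert\Delta(\bm\lambda)\rvert^2 = \sum_{\sigma, \tau \in S_d} \sgn(\sigma \tau) \prod_{i=1}^d \lambda_i^{\sigma(i)-\tau(i)}
\]
and invoking orthogonality of the characters $\bm\lambda \mapsto \prod_i \lambda_i^{k_i}$ on $\mathbb{T}^d$, only the diagonal terms $\sigma = \tau$ survive and each contributes $1$, giving $\int_{\mathbb{T}^d}\lvert\Delta\rvert^2 d\bm\lambda = d!$. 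Thus $C = 1$ and the formula follows. This same orthogonality calculation, together with the character formula of Lemma~\ref{lem:Ud-irrep-character}, is in fact what justifies the character-inner-product computation that motivates the formula in the first place, so it is natural to use it here.
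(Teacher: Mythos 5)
The paper states this as a standard result and does not include a proof, so there is no in-paper argument to compare against; your proof is the classical Weyl change-of-variables argument and is correct. In brief: the conjugation map $\Phi$ on $(U(d)/\mathbb{T}^d)\times\mathbb{T}^d$ is a $d!$-sheeted covering over the regular locus (whose complement is a proper subvariety and hence Haar-null, so may be discarded), the Jacobian of $\mathrm{Ad}(\bD_{\bm\lambda}^{-1})-\mathrm{Id}$ restricted to the off-diagonal skew-Hermitian complement of $\mathfrak{t}$ is $\prod_{i<j}\lvert\lambda_j/\lambda_i-1\rvert^2=\lvert\Delta(\bm\lambda)\rvert^2$ because the map is $\CC$-linear on each $(i,j)$-coordinate plane, the class-invariance of $f$ collapses the $U(d)/\mathbb{T}^d$ integral, and testing against $f\equiv 1$ pins the constant via $\int_{\mathbb{T}^d}\lvert\Delta\rvert^2\,d\bm\lambda=d!$ from character orthogonality. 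One small point worth making explicit: your normalization check implicitly takes $d\bm\lambda$ to be the \emph{probability} Haar measure on $\mathbb{T}^d$, which is indeed the convention under which the stated identity holds; elsewhere the paper uses the ``surface area'' convention $\int_{\mathbb{T}^d}d\bm\lambda=(2\pi)^d$, under which an extra factor $(2\pi)^{-d}$ would appear, so it is worth stating which convention is in force here.
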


Thus, we find
\[ m_{\bw \to r, r^{\prime}} = \frac{1}{d!} \int_{ \mathbb{T}^d} a_{(w_1+d-1, \dots, w_d)}(\bm{\lambda})\overline{\Delta(\bm\lambda)\left(\sum_{i=1}^d \lambda_i\right)^r \left(\sum_{i=1}^d \overline{\lambda_i}\right)^{r^{\prime}}} d \bm{\lambda}. \]
This can be non-zero only if some monomial is shared between the two polynomials
\[ \Delta(\bm\lambda)\left(\sum_{i=1}^d \lambda_i\right)^r \left(\sum_{i=1}^d \overline{\lambda_i}\right)^{r^{\prime}} \,\,\,\text{ and }\,\,\, a_{(w_1+d-1, \dots, w_d)}(\bm{\lambda}), \]
after canceling $\lambda_i \overline{\lambda_i} = 1$ and interpreting $\overline{\lambda_i} = \lambda_i^{-1}$ as a monomial with negative exponent.

The monomials occurring in the former have tuples of exponents of the form
\[
    \sigma(d-1, \dots, 0) + (s_1, \dots, s_d),
\]
for $\sigma \in S_d$ and the latter tuple satisfying
\begin{align}
    \sum_{i = 1}^d \max\{s_i, 0\} &\leq r, \label{eq:s-cond-1} \\
    \sum_{i = 1}^d \min\{s_i, 0\} &\geq -r^{\prime}. \label{eq:s-cond-2}
\end{align}
The monomials occurring in the latter have tuples of exponents of the form
\[
    \sigma(w_1+d-1, \dots, w_d)
\]
for $\sigma \in S_d$.
Thus, a monomial can be shared between the two only if there exist $\sigma, \tau \in S_d$ such that
\[ \bs \colonequals \sigma(w_1+d-1, \dots,w_d)-\tau(d-1, \dots, 0) \]
satisfies \eqref{eq:s-cond-1} and \eqref{eq:s-cond-2}.

The following general combinatorial result restricts when it is possible to achieve this.
\begin{Prop}
    Suppose $a_1 \geq \cdots \geq a_d$ and $b_1 \geq \cdots \geq b_d$.
    Define, for $\sigma, \tau \in S_d$,
    \begin{align*}
        F_+(\sigma, \tau) &\colonequals \sum_{i = 1}^d \max\{a_{\sigma(i)} - b_{\tau(i)}, 0\}, \\
        F_-(\sigma, \tau) &\colonequals \sum_{i = 1}^d \min\{a_{\sigma(i)} - b_{\tau(i)}, 0\}.
    \end{align*}
    Then, $F_+(\sigma, \tau)$ is minimized and $F_-(\sigma, \tau)$ is maximized when $\sigma = \tau$.
\end{Prop}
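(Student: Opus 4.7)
The plan is to reduce the two-permutation statement to a one-permutation statement and then use a standard swap (or bubble-sort) argument. First, by the change of variables $j = \sigma(i)$, we can rewrite
\[ F_+(\sigma, \tau) = \sum_{j=1}^d \max\{a_j - b_{\tau(\sigma^{-1}(j))}, 0\}, \]
and similarly for $F_-$. Thus both quantities depend on $(\sigma, \tau)$ only through the single permutation $\pi \colonequals \tau \sigma^{-1}$, and $\sigma = \tau$ corresponds to $\pi = \mathrm{id}$. It therefore suffices to show that, writing $G_+(\pi) \colonequals \sum_j \max\{a_j - b_{\pi(j)}, 0\}$ and $G_-(\pi) \colonequals \sum_j \min\{a_j - b_{\pi(j)}, 0\}$, the quantity $G_+$ is minimized and $G_-$ maximized at $\pi = \mathrm{id}$.

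The key local inequality is the following: for any reals $a \geq a'$ and $b \leq b'$,
\begin{equation} \label{eq:swap}
    \max\{a - b, 0\} + \max\{a' - b', 0\} \geq \max\{a - b', 0\} + \max\{a' - b, 0\},
\end{equation}
with the inequality reversed for $\min\{\cdot, 0\}$. To see \eqref{eq:swap}, set $\phi(t) \colonequals \max\{t, 0\}$, which is convex, so for any $h \geq 0$ the increment $t \mapsto \phi(t+h) - \phi(t)$ is nondecreasing in $t$. Taking $h = b' - b \geq 0$ and comparing at $t = a' - b'$ and $t = a - b' \geq a' - b'$ yields \eqref{eq:swap} upon rearrangement. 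The reverse inequality for $\min\{\cdot, 0\}$ follows from the corresponding concavity of $\psi(t) \colonequals \min\{t, 0\}$ by the same argument.

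Now I would run the standard bubble-sort argument: if $\pi \neq \mathrm{id}$, there must exist an inversion, i.e., indices $i < j$ with $\pi(i) > \pi(j)$, which implies $b_{\pi(i)} \leq b_{\pi(j)}$. Let $\pi'$ be obtained from $\pi$ by swapping the values $\pi(i)$ and $\pi(j)$. Applying \eqref{eq:swap} with $a = a_i$, $a' = a_j$, $b = b_{\pi(i)}$, $b' = b_{\pi(j)}$ (so $a \geq a'$ and $b \leq b'$), and noting that the remaining summands in $G_+(\pi)$ and $G_+(\pi')$ coincide, we conclude $G_+(\pi) \geq G_+(\pi')$, and similarly $G_-(\pi) \leq G_-(\pi')$. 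Since any permutation can be reduced to $\mathrm{id}$ by a finite sequence of such inversion-removing transpositions (indeed, this is exactly bubble sort), iterating gives $G_+(\pi) \geq G_+(\mathrm{id})$ and $G_-(\pi) \leq G_-(\mathrm{id})$ for every $\pi \in S_d$, which is the desired conclusion.

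There is no substantive obstacle here; the only nontrivial step is \eqref{eq:swap}, which is an instance of the general principle that $\max\{a - b, 0\}$ is submodular in $(a, b)$ (equivalently, that $-(a - b)_+$ is supermodular), so the rearrangement inequality for submodular functions applies. I might also note that the proof uses nothing specific about the functions $\max\{\cdot, 0\}$ and $\min\{\cdot, 0\}$ beyond their convexity and concavity, respectively, so the same conclusion holds for $\sum_i \phi(a_{\sigma(i)} - b_{\tau(i)})$ for any convex (resp.\ concave) $\phi$.
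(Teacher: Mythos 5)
Your argument is correct and follows the same bubble-sort strategy as the paper's proof: reduce to a single permutation (the paper fixes $\sigma = \id$ without loss of generality, while you equivalently substitute $\pi = \tau\sigma^{-1}$), then iteratively remove inversions via a two-element exchange inequality. The one genuine difference is how that exchange inequality is established. The paper verifies it by enumerating the possible orderings of the four numbers $a_i \geq a_j$ and $b_k \geq b_\ell$, while you derive it from convexity of $t \mapsto \max\{t, 0\}$ (monotonicity in $t$ of the increment $\phi(t+h) - \phi(t)$ for $h \geq 0$) and the corresponding concavity of $t \mapsto \min\{t, 0\}$. Your route is cleaner and, as you observe at the end, buys a genuine generalization: the same argument shows $\sum_i \phi(a_{\sigma(i)} - b_{\tau(i)})$ is minimized (resp.\ maximized) at $\sigma = \tau$ for any convex (resp.\ concave) $\phi$, of which the paper's statement is the special case $\phi(t) = \max\{t, 0\}$ (resp.\ $\min\{t,0\}$). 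A small stylistic point in your favor: the paper phrases the iteration as a proof by contradiction, but your direct descent $G_+(\pi) \geq G_+(\pi') \geq \cdots \geq G_+(\id)$ carries the same content with simpler logic.
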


\begin{proof}
    In this proof let us write $\id$ for the identity permutation.
  Without loss of generality, we may assume $\sigma = \id$.
  Let $\tau \in S_d$ have $F_+(\id, \tau) < F_+(\id, \id)$.
  Then $\tau \neq \id$, so there exist $1 \leq i < j \leq d$ that are transposed by $\tau$, i.e., having $\tau(i) > \tau(j)$.
  Define $\tau^{\prime} \colonequals (\tau(i) \, \, \tau(j)) \circ \tau$.
  We claim that then $F_+(\id, \tau^{\prime}) \leq F_+(\id, \tau)$.
  Repeating this procedure shows eventually that $F_+(\id, \id) \leq F_+(\id, \tau)$, which is a contradiction and gives the result for $F_+$.
  For $F_-$ a symmetric argument applies.
  
  Let us write $k \colonequals \tau(j) < \tau(i) \colonequals \ell$.
  Then,
  \begin{align*}
   F_+(\id, \tau^{\prime}) - F_+(\id, \tau) 
   &= \max\{a_i - b_{k}, 0\} + \max\{a_j - b_{\ell}, 0\} \\
   &\hspace{0.8cm} - \max\{a_i - b_{\ell}, 0\} - \max\{a_j - b_{k}, 0\}.
   \end{align*}
  Enumerating all possible orderings of the four numbers $a_i \geq a_j$ and $b_k \geq b_{\ell}$ shows that this is always non-positive, completing the argument.
\end{proof}

The following results then follow immediately from applying the Proposition to our preceding computations.

\begin{Lemma}\label{weights pol}
    If an irrep of $U(d)$ with weight $\bm{w} = (w_1, \dots, w_d) \in \mathbb{Z}^d_{\downarrow}$ occurs as a subrepresentation of $\rho^{\otimes r} \otimes (\rho^*)^{\otimes r^{\prime}}$, then
    \begin{align*}
        \sum_{i = 1}^d \max\{0, w_i\} &\leq r, \\
        \sum_{i = 1}^d \min\{0, w_i\} &\geq -r^{\prime}.
    \end{align*}
\end{Lemma}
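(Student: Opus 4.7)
The entire machinery needed is already assembled in the preceding discussion, so the plan is essentially to knit it together. The strategy is to compute the multiplicity $m_{\bm{w} \to r, r^{\prime}}$ as an inner product of characters and extract a monomial-matching necessary condition, then invoke the preceding combinatorial proposition to identify the optimal permutations.

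First, I will use the Weyl integration formula (Proposition~\ref{weyl int}) together with the character formula of Lemma~\ref{lem:Ud-irrep-character} to rewrite
\[
    m_{\bm w \to r,r^{\prime}} = \frac{1}{d!} \int_{\TT^d} a_{(w_1+d-1, \dots, w_d)}(\bm\lambda) \, \overline{\Delta(\bm\lambda) \Big(\sum_i \lambda_i\Big)^r \Big(\sum_i \overline{\lambda_i}\Big)^{r^{\prime}}} \, d\bm\lambda.
\]
Since this is an integral of a Laurent monomial expansion against the Haar measure of $\TT^d$, it vanishes unless some monomial occurs in both factors (identifying $\overline{\lambda_i}$ with $\lambda_i^{-1}$). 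The monomials in $a_{(w_1+d-1,\dots,w_d)}(\bm\lambda)$ have exponent vectors of the form $\sigma(w_1+d-1, \dots, w_d)$ for $\sigma \in S_d$, while the monomials in the other factor have exponent vectors of the form $\tau(d-1, \dots, 0) + \bs$ where $\tau \in S_d$ and $\bs \in \ZZ^d$ satisfies \eqref{eq:s-cond-1} and \eqref{eq:s-cond-2}.

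Matching these, non-vanishing multiplicity requires the existence of $\sigma, \tau \in S_d$ such that $\bs \colonequals \sigma(w_1+d-1, \dots, w_d) - \tau(d-1, \dots, 0)$ satisfies \eqref{eq:s-cond-1} and \eqref{eq:s-cond-2}. Now set $a_i \colonequals w_i + d - i$ and $b_i \colonequals d - i$; since $\bm w \in \ZZ^d_{\downarrow}$, we have $a_i - a_{i+1} = (w_i - w_{i+1}) + 1 \geq 1 > 0$, so both $(a_i)$ and $(b_i)$ are strictly decreasing. The preceding combinatorial proposition applies and shows that $F_+(\sigma,\tau) = \sum_i \max\{s_i,0\}$ is minimized and $F_-(\sigma,\tau) = \sum_i \min\{s_i,0\}$ is maximized at $\sigma = \tau$, where $\bs$ reduces to $(a_i - b_i)_{i=1}^d = (w_1, \dots, w_d)$.

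The existence of admissible $\sigma, \tau$ therefore forces $\min_{\sigma,\tau} F_+(\sigma,\tau) \leq r$ and $\max_{\sigma,\tau} F_-(\sigma,\tau) \geq -r^{\prime}$, which upon substitution gives exactly $\sum_i \max\{0, w_i\} \leq r$ and $\sum_i \min\{0, w_i\} \geq -r^{\prime}$. No step in this plan looks substantive; the only thing requiring real work was the combinatorial proposition, which has already been proved. Thus the lemma follows essentially immediately by assembling these pieces.
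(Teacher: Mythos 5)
Your proof is correct and follows essentially the same route as the paper: Weyl integration plus the character formula to express the multiplicity as an integral, a monomial-matching argument to reduce non-vanishing to the existence of suitable $\sigma, \tau \in S_d$, and then the combinatorial proposition on $F_\pm$ to identify $\sigma = \tau$ as optimal, yielding the stated inequalities. The only small point worth flagging is that you correctly observe both $(a_i)$ and $(b_i)$ are strictly (not merely weakly) decreasing, ensuring the proposition's hypotheses hold, which the paper leaves implicit.
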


\begin{Cor}\label{weights pol degree}
    If an irrep of $U(d)$ with weight $\bm{w} = (w_1, \dots, w_d) \in \mathbb{Z}^d_{\downarrow}$ occurs as a subrepresentation of $\rho^{\otimes r} \otimes (\rho^*)^{\otimes r^{\prime}}$ for any $r, r^{\prime} \geq 0$ with $r + r^{\prime} \leq k$, then $\lvert \bw \rvert \leq k$.
\end{Cor}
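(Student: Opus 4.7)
The plan is to deduce Corollary~\ref{weights pol degree} directly from Lemma~\ref{weights pol} by observing that the quantity $\lvert\bm w\rvert$ decomposes into a positive and negative part, each of which is controlled by one of the two inequalities supplied by the lemma.

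Concretely, I would first write
\[
    \lvert \bm w \rvert = \sum_{i=1}^d \lvert w_i \rvert = \sum_{i=1}^d \max\{0,w_i\} \;-\; \sum_{i=1}^d \min\{0,w_i\},
\]
which is an elementary identity valid for any real numbers. Next, assuming that the irrep with weight $\bm w$ occurs in some $\rho^{\otimes r}\otimes(\rho^*)^{\otimes r'}$ with $r+r'\le k$, I would apply Lemma~\ref{weights pol}, which gives
\[
    \sum_{i=1}^d \max\{0,w_i\} \le r, \qquad -\sum_{i=1}^d \min\{0,w_i\} \le r'.
\]
Adding these two inequalities yields $\lvert \bm w \rvert \le r+r' \le k$, as desired.

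There is essentially no obstacle here: once Lemma~\ref{weights pol} is in hand, the corollary is a one-line arithmetic consequence. The only thing to be careful about is the sign conventions in the decomposition of $\lvert w_i\rvert$, which is why I would state the identity explicitly before applying the lemma. No new combinatorial or representation-theoretic ingredients are needed beyond what has already been established.
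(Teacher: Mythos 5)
Your proof is correct and matches the paper's intent: the paper presents the corollary as an immediate consequence of Lemma~\ref{weights pol} (without writing out the one-line arithmetic), and your decomposition of $\lvert\bm w\rvert$ into positive and negative parts followed by adding the two inequalities is exactly the step being elided.
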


\section{Details and optimizations of symbolic verification}
\label{app:computational}

We mention a few further properties that we take advantage of in the code implementing our computations to save computing time. 

Let us recall the setting of our main computation.
For $\bm w$ one of the two weights $(s, 0, 0, 0, 0, -s)$ for $s \in \{2, 3\}$, let $\bm f$ be the associated partition of $n = ds = 6s$, which in these cases will be $\bm f = (2s, s, s, s, s)$.
Following the simple brute force procedure in Appendix~\ref{app:tV-nonzero}, we find a semistandard Young tableau $T$ of shape $\bm f$ and of content $(s, s, s, s, s, s)$ (i.e., containing each index $i \in [6]$ exactly $s$ times) such that $\bv \colonequals \bm \Pi_{S_d} \bm Y_{\bm f} \bm e_T \neq \bm 0$.

Let us expand the definitions and write $\bm v$ explicitly.
We recall from Appendix~\ref{app:rep-Ud} the definitions related to $\bm Y_{\bm f}$: we have $T_0$ a standard Young tableau of the same shape as $T$, labelled by the numbers $1, 2, \dots, n$ increasing across the rows.
To $T_0$ we associate the subgroups $R, C \leq S_n$ fixing the sets of labels of each row and each column, respectively.
We then have $\bm Y_{\bm f} = \sum_{\sigma \in R} \sum_{\tau \in C} \sgn(\tau) \bm F_{\tau\sigma}$, where $\bm F_{\sigma}$ permutes the tensor axes according to $\sigma$.
Finally, the matrix $\bm \Pi_{S_d}$ averages over $\rho \in S_d$ the application of $\rho$ to \emph{every} axis of a tensor.

Let $t_1, \dots, t_n \in [d]$ be the indices written in $T$, read across the rows.
We then have
\begin{align*}
    \bm e_T &= \be_{t_1} \otimes \cdots \otimes \be_{t_n}, \\
    \bm Y_{\bm f}\bm e_T &= \sum_{\sigma \in R} \sum_{\tau \in C} \sgn(\tau) \be_{t_{( \tau \sigma)(1)}} \otimes \cdots \otimes \be_{t_{(\tau \sigma)(n)}}, \\
    \bv = \bm \Pi_{S_d}\bm Y_{\bm f}\bm e_T &= \frac{1}{d!}\sum_{\rho \in S_d} \sum_{\sigma \in R} \sum_{\tau \in C} \sgn(\tau) \be_{\rho(t_{( \tau \sigma)(1)})} \otimes \cdots \otimes \be_{\rho(t_{( \tau \sigma)(n)})}.
\end{align*}

We are interested in computing quadratic forms $\bv^{\top}\bM \bv$ for the two matrices $\bM = \pi(\bH)$ and $\bM = \bm I$.
These take the form
\begin{align*}
    \bv^{\top} \bM \bv &= \frac{1}{d!^2}\sum_{\rho, \rho^{\prime} \in S_d} \, \sum_{\sigma, \sigma^{\prime} \in R} \, \sum_{\tau, \tau^{\prime} \in C} \sgn(\tau \tau^{\prime}) \\
    &\hspace{0.1cm} (\be_{\rho(t_{( \tau \sigma)(1)})} \otimes \cdots \otimes \be_{\rho(t_{( \tau \sigma)(n)})})^{\top}\bM \be_{\rho^{\prime}(t_{( \tau^{\prime} \sigma^{\prime})(1)})} \otimes \cdots \otimes \be_{\rho^{\prime}(t_{( \tau^{\prime} \sigma^{\prime})(n)})}.
\end{align*}    

Now, defining for two tableaux $A, B$ having shape $\bm f$, entries in $[d]$, and the associated actions of $S_n$ and $S_d$, the function
\[ g(A, B) \colonequals \frac{1}{d!^2}\sum_{\rho, \rho^{\prime} \in S_d} \be_{\rho(A)}^{\top} \bM \be_{\rho^{\prime}(B)}, \]
we may rewrite the above as
\[ \bv^{\top}\bM \bv = \sum_{\sigma, \sigma^{\prime} \in R} \, \sum_{\tau, \tau^{\prime} \in C} \sgn(\tau \tau^{\prime}) g\big((\tau\sigma)(T), (\tau^{\prime}\sigma^{\prime})(T)\big). \]
When $\bM = \what{\pi}(\bH)$, then $g(A, B)$ is, up to rescaling, just the function $f(\bm i, \bm j)$ from Section~\ref{sec:final-verification}, where $\bm i$ and $\bm j$ are the entries of $A$ and $B$ read across the rows.
As with $f$, $g(A, B)$ only depends on the associated matrix $\bS^{\bm i, \bm j}$ of these vectorizations, and is unchanged by row and column permutations of this matrix.
In particular, for any $\eta \in S_n$, we have $g(A, B) = g(\eta(A), \eta(B))$.
Applying this transformation with $\eta = \tau^{\prime^{-1}}$ above, we find that we may remove one of the summations over $C$,
\[ \bv^{\top}\bM \bv = \lvert C \rvert\sum_{\sigma, \sigma^{\prime} \in R} \, \sum_{\tau  \in C} \sgn(\tau) g\big((\tau\sigma)(T), \sigma^{\prime}(T)\big). \]

For another simplification, let $R_{\mathsf{stab}} < R$ denote the subgroup of the row stabilizer that further preserves the labelling of each row of $T$ (recalling that $T$, being only semistandard, has repeated indices whereby $R_{\mathsf{stab}}$ can be non-trivial).
In the code, we abuse naming conventions slightly and refer to this as the \emph{row stabilizer of $T$}.
Let $R_1$ be a set of right coset representatives of $R_{\mathsf{stab}}$ in $R$.
The value of $g(A, \sigma(B))$ only depends on the right coset of $G$ to which $\sigma$ belongs, so we may collect these terms and find
\begin{align*}
    \bv^{\top}\bM \bv
    &= \lvert C \rvert\frac{\lvert R \rvert}{\lvert R_{\mathsf{stab}} \rvert} \sum_{\substack{\sigma \in R \\ \sigma^{\prime} \in R_1}} \, \sum_{\tau  \in C} \sgn(\tau) g\big((\tau\sigma)(T), \sigma^{\prime}(T)\big),
    \intertext{and, using $g(\tau(A), B) = g(A, \tau^{-1}(B))$, applying the same transformation to $\sigma$, and then moving $\tau$ back to the first argument of $g$ further gives}
    &= \lvert C \rvert\left(\frac{\lvert R \rvert}{\lvert R_{\mathsf{stab}} \rvert}\right)^2 \sum_{\sigma, \sigma^{\prime} \in R_1} \, \sum_{\tau  \in C} \sgn(\tau) g\big((\tau\sigma)(T), \sigma^{\prime}(T)\big).
\end{align*}

For a final simplification, we note that if $\sigma(T)$ contains a repetition of any index in any column, then the summation over $\tau$ will result in zero, since any two terms with $\tau$ differing only by a transposition of these two positions will make opposite contributions.
Thus, letting $R_2 \subseteq R_1$ denote only those elements $\sigma \in R_1$ such that $\sigma(T)$ contains no repeated elements in any column, we may again first restrict to $\sigma^{\prime} \in R_2$, and then, moving $\tau$ between the arguments of $g$ as above, also restrict to $\sigma \in R_2$.
We finally obtain:
\begin{equation*}
\bv^{\top}\bM \bv = \lvert C \rvert \left(\frac{\lvert R \rvert}{\lvert R_{\mathsf{stab}} \rvert}\right)^2 \sum_{\sigma, \sigma^{\prime} \in R_2} \, \sum_{\tau  \in C} \sgn(\tau) g\big((\tau\sigma)(T), \sigma^{\prime}(T)\big).
\end{equation*}
This reduced summation is what our code in \cite{supplementary} directly implements.
We note that the leading factor applies to both of the matrices we want to substitute for $\bM$, so it need not be included in the calculations, and likewise for the $1 / d!^2$ factor in the definition of $g$.

\end{document}